\definecolor{light-gray}{gray}{0.85}
\renewcommand*{\backrefalt}[4]{%
    \ifcase #1 \footnotesize{(Not cited.)}%
    \or        \footnotesize{(Cited on page~#2.)}%
    \else      \footnotesize{(Cited on pages~#2.)}%
    \fi}
\newtheorem{theorem}{Theorem}[section]
\newtheorem{corollary}[theorem]{Corollary}
\newtheorem{lemma}[theorem]{Lemma}
\newtheorem{definition}[theorem]{Definition}
\newcommand{\EE}{\mathbb{E}}
\newcommand{\grad}{\nabla}
\newcommand{\gradx}{\nabla_{\mathbf x}}
\newcommand{\grady}{\nabla_{\mathbf y}}
\newcommand{\kappax}{\kappa_{\mathbf x}}
\newcommand{\kappay}{\kappa_{\mathbf y}}
\newcommand{\Dx}{D_{\mathbf x}}
\newcommand{\Dy}{D_{\mathbf y}}
\newcommand{\st}{\textnormal{s.t.}}
\newcommand{\proj}{\mathcal{P}}
\newcommand{\PX}{\mathcal{P}_\mathcal{X}}
\newcommand{\PY}{\mathcal{P}_\mathcal{Y}}
\newcommand{\x}{\mathbf x}
\newcommand{\y}{\mathbf y}
\newcommand{\z}{\mathbf z}
\newcommand{\w}{\mathbf w}
\newcommand{\su}{\mathbf u}
\newcommand{\sv}{\mathbf v}
\newcommand{\argmin}{\mathop{\rm{argmin}}}
\newcommand{\argmax}{\mathop{\rm{argmax}}}
\newcommand{\XCal}{\mathcal{X}}
\newcommand{\YCal}{\mathcal{Y}}
\newcommand{\prox}{\textnormal{prox}}
\newcommand{\br}{\mathbb{R}}
\newcommand{\ba}{\begin{array}}
\newcommand{\ea}{\end{array}}
\newcommand{\norm}[1]{\|{#1} \|}
\newcommand{\cO}{\mathcal{O}}
\newcommand{\AGPROX}{Maximin-AG2}
\newcommand{\AGCC}{Minimax-APPA}
\newcommand{\AGNC}{Minimax-PPA}
\title{\bf{\LARGE{Near-Optimal Algorithms for Minimax Optimization}}}
\author{Tianyi Lin\thanks{Department of IEOR, UC Berkeley, Berkeley, CA 94720, USA; Email: darren{\_}lin@berkeley.edu.} \and Chi Jin\thanks{Department of EE, Princeton University, Princeton, NJ 08544, USA; Email: chij@princeton.edu.} \and Michael. I. Jordan\thanks{Department of EECS and Statistics, UC Berkeley, Berkeley, CA 94720, USA; Email: jordan@cs.berkeley.edu.}
}
\begin{document}
\maketitle

\begin{abstract}
This paper resolves a longstanding open question pertaining to the design of near-optimal first-order algorithms for smooth and strongly-convex-strongly-concave minimax problems. Current state-of-the-art first-order algorithms find an approximate Nash equilibrium using $\tilde{O}(\kappax+\kappay)$~\citep{Tseng-1995-Linear} or $\tilde{O}(\min\{\kappax\sqrt{\kappay}, \sqrt{\kappax}\kappay\})$~\citep{Alkousa-2019-Accelerated} gradient evaluations, where $\kappax$ and $\kappay$ are the condition numbers for the strong-convexity and strong-concavity assumptions. A gap still remains between these results and the best existing lower bound $\tilde{\Omega}(\sqrt{\kappax\kappay})$~\citep{Ibrahim-2019-Lower, Zhang-2019-Lower}.  This paper presents the first algorithm with $\tilde{O}(\sqrt{\kappax\kappay})$ gradient complexity, matching the lower bound up to logarithmic factors. Our algorithm is designed based on an accelerated proximal point method and an accelerated solver for minimax proximal steps.  It can be easily extended to the settings of strongly-convex-concave, convex-concave, nonconvex-strongly-concave, and nonconvex-concave functions. This paper also presents algorithms that match or outperform all existing methods in these settings in terms of gradient complexity, up to logarithmic factors.
\end{abstract}

\section{Introduction}
Let $\br^m$ and $\br^n$ be finite-dimensional Euclidean spaces and let the function $f: \br^m \times \br^n \rightarrow \br$ be smooth. Let $\XCal$ and $\YCal$ are two nonempty closed convex sets in $\br^m$ and $\br^n$. Our problem of interest is the following minimax optimization problem: 
\begin{equation}\label{prob:minimax}
\min_{\x \in \XCal} \max_{\y \in \YCal} \ f(\x, \y). 
\end{equation}
The theoretical study of solutions of problem~\eqref{prob:minimax} has been an  focus of several decades of research in mathematics, statistics, economics and computer science~\citep{Basar-1999-Dynamic, Nisan-2007-Algorithmic, Von-2007-Theory, Facchinei-2007-Finite, Berger-2013-Statistical}. Recently, this line of research has become increasingly relevant to algorithmic machine learning, with applications including robustness in adversarial learning~\citep{Goodfellow-2014-Generative, Sinha-2018-Certifiable}, prediction and regression problems~\citep{Cesa-2006-Prediction, Xu-2009-Robustness} and distributed computing~\citep{Shamma-2008-Cooperative, Mateos-2010-Distributed}. Moreover, real-world machine-learning systems are increasingly embedded in multi-agent systems or matching markets and subject to game-theoretic constraints~\citep{Jordan-2018-Artificial}.

 Most existing work on minimax optimization focuses on the convex-concave setting, where the function $f(\cdot, \y)$ is convex for each $\y \in \br^n$ and the function $f(\x, \cdot)$ is concave for each $\x \in \br^m$. The best known convergence rate in a general convex-concave setting is $O(1/\epsilon)$ in terms of duality gap, which can be achieved by Nemirovski's mirror-prox algorithm~\citep{Nemirovski-2004-Prox} (a special case of which is the extragradient algorithm~\citep{Korpelevich-1976-Extragradient}), Nesterov's dual extrapolation algorithm~\citep{Nesterov-2007-Dual} or Tseng's accelerated proximal gradient algorithm~\citep{Tseng-2008-Accelerated}. This rate is known to be optimal for the class of smooth convex-concave problems~\citep{Ouyang-2019-Lower}. Furthermore, optimal algorithms are known for special instances of convex-concave setting; e.g., for the affinely constrained smooth convex problem~\citep{Ouyang-2015-Accelerated} and problems with a composite bilinear objective function, $f(\x, \y) = g(\x) + \x^\top A\y - h(\y)$~\citep{Chen-2014-Optimal}. 

Very recently, the lower complexity bound of first-order algorithms have been established for solving general strongly-convex-strongly-concave and strongly-convex-concave minimax optimization problems~\citep{Ouyang-2019-Lower, Ibrahim-2019-Lower, Zhang-2019-Lower}. For the strongly-convex-strongly-concave setting, in which $\kappax, \kappay > 0$ are the condition numbers for $f(\cdot, \y)$ and $f(\x, \cdot)$, respectively, the complexity bound is $\tilde{\Omega}(\sqrt{\kappax\kappay})$ while the best known upper  bounds are $\tilde{O}(\kappax+ \kappay)$~\citep{Tseng-1995-Linear, Gidel-2018-A, Mokhtari-2019-Unified} and $\tilde{O}(\min\{\kappax\sqrt{\kappay}, \kappay\sqrt{\kappax}\})$ \citep{Alkousa-2019-Accelerated}. For the strongly-convex-concave setting in which $\kappax > 0$ and $\kappay = 0$, the lower complexity bound is $\tilde{\Omega}(\sqrt{\kappax/\epsilon})$ while the best known upper bound is $O(\kappax/\epsilon)$  \citep{Thekumparampil-2019-Efficient}. The existing algorithms that obtain a rate of $O(\sqrt{\kappax/\epsilon})$ in this context are only for special case of strongly-convex-linear, where $\x$ and $\y$ are connected only through a bilinear term $\x^\top A \y$ or $f(\x, \cdot)$ is linear for each $\x \in \br^m$ \citep[see, e.g.,][]{Nesterov-2005-Smooth, Chambolle-2016-Ergodic, Juditsky-2011-First, Hamedani-2018-Primal}. Thus, a gap remains between the lower complexity bound and the upper complexity bound for existing algorithms in both the strongly-convex-strongly-concave setting and the strongly-convex-concave setting. Accordingly, we have the following open problem:
\begin{center}
\textbf{Can we design first-order algorithms that achieve the lower bounds in these settings?}
\end{center}
This paper presents an affirmative answer by resolving the above open problem up to logarithmic factors. More specifically, our contribution is as follows.  
\begin{table*}[!t]
\renewcommand{\arraystretch}{1.4}
\centering
\caption{Comparison of gradient complexities to find an $\epsilon$-saddle point (Definition \ref{Def:eps_saddle}) in the convex-concave setting. This table highlights only the dependency on error tolerance $\epsilon$ and the strong-convexity and strong-concavity condition numbers, $\kappax, \kappay$.}
\label{tab:results_cc}
\begin{tabular}{|c|c|c|} \hline
 \textbf{Settings} & \textbf{References} & \textbf{Gradient Complexity} \\ \hline 
\multirow{8}{*}{\shortstack{Strongly-Convex-Strongly-\\Concave}} & \citet{Tseng-1995-Linear} & \multirow{4}{*}{$\tilde{O}(\kappax+\kappay)$} \\ \cline{2-2}
& \citet{Nesterov-2006-Solving} & \\ \cline{2-2}
& \citet{Gidel-2018-A} & \\ \cline{2-2}
& \citet{Mokhtari-2019-Unified} & \\ \cline{2-3} 
& \citet{Alkousa-2019-Accelerated} & $\tilde{O}(\min\{\kappax\sqrt{\kappay}, \kappay\sqrt{\kappax}\})$ \\\hhline{|~--|}
& \cellcolor{light-gray} \textbf{This paper} (Theorem \ref{thm:scsc}) & \cellcolor{light-gray} $\tilde{O}(\sqrt{\kappax\kappay})$ \\ \cline{2-3} 
& Lower bound \citep{Ibrahim-2019-Lower} & $\tilde{\Omega}(\sqrt{\kappax\kappay})$ \\ \cline{2-3} 
& Lower bound \citep{Zhang-2019-Lower} & $\tilde{\Omega}(\sqrt{\kappax\kappay})$ \\ \hline \hline
\multirow{4}{*}{\shortstack{Strongly-Convex-Linear\\{\footnotesize(special case of}\\{\footnotesize strongly-convex-concave)}}} & \citet{Juditsky-2011-First} & \multirow{3}{*}{$O(\sqrt{\kappax/\epsilon})$} \\ \cline{2-2}
& \citet{Hamedani-2018-Primal} & \\ \cline{2-2}
& \citet{Zhao-2019-Optimal} & \\ \cline{1-3}
\multirow{2}{*}{Strongly-Convex-Concave} & \citet{Thekumparampil-2019-Efficient} & $\tilde{O}(\kappax/\sqrt{\epsilon})$ \\ 
\hhline{|~--|}
& \cellcolor{light-gray} \textbf{This paper} (Corollary \ref{cor:scc}) & \cellcolor{light-gray} $\tilde{O}(\sqrt{\kappax/\epsilon})$ \\ \cline{2-3}
& Lower bound \citep{Ouyang-2019-Lower} & $\tilde{\Omega}(\sqrt{\kappax/\epsilon})$ \\ \hline \hline
\multirow{4}{*}{Convex-Concave} & \citet{Nemirovski-2004-Prox} & \multirow{3}{*}{$O(\epsilon^{-1})$} \\ \cline{2-2}
& \citet{Nesterov-2007-Dual} & \\ \cline{2-2}
& \citet{Tseng-2008-Accelerated} & \\ \hhline{|~--|}
& \cellcolor{light-gray} \textbf{This paper} (Corollary \ref{cor:cc}) & \cellcolor{light-gray} $\tilde{O}(\epsilon^{-1})$ \\  \cline{2-3}
& Lower bound \citep{Ouyang-2019-Lower} & $\Omega(\epsilon^{-1})$ \\ \hline 
\end{tabular}
\end{table*}
We propose the first near-optimal algorithms for solving the strongly-convex-strongly-concave and strongly-convex-concave minimax optimization problems. In the former setting, our algorithm achieves a gradient complexity of $\tilde{O}(\sqrt{\kappax\kappay})$ which matches the lower complexity bound~\citep{Ibrahim-2019-Lower, Zhang-2019-Lower} up to logarithmic factors. In the latter setting, our algorithm attains a gradient complexity of $\tilde{O}(\sqrt{\kappax/\epsilon})$ which again matches the lower complexity bound~\citep{Ouyang-2019-Lower} up to logarithmic factors. In addition, our algorithm extends to the general convex-concave setting, achieving a gradient complexity of $\tilde{O}(\epsilon^{-1})$, which matches the lower bound of \citet{Ouyang-2019-Lower} as well as the best existing upper bounds \citep{Nemirovski-2004-Prox, Nesterov-2007-Dual, Tseng-2008-Accelerated} up to logarithmic factors.  

Our second contribution is a class of accelerated algorithms for the smooth nonconvex-strongly-concave and nonconvex-concave minimax optimization problems. In the former setting, our algorithm achieves a gradient complexity bound of $\tilde{O}(\sqrt{\kappay}\epsilon^{-2})$ which improves the best known bound $\tilde{O}(\kappay^2\epsilon^{-2})$~\citep{Jin-2019-Minmax, Rafique-2018-Non, Lin-2019-Gradient, Lu-2019-Hybrid}. In the latter setting, our algorithms specialize to a range of different notions of optimality. In particular, expressing our results in terms of stationarity of $f$, our algorithm achieves a gradient complexity bound of $\tilde{O}(\epsilon^{-2.5})$, which improves the best known bound $\tilde{O}(\epsilon^{-3.5})$~\citep{Nouiehed-2019-Solving}. In terms of stationarity of the function $\Phi(\cdot):= \max_{\y\in\YCal} f(\cdot, \y)$, our algorithm achieves a gradient complexity bound of $\tilde{O}(\epsilon^{-3})$ which matches the current state-of-the-art results~\citep{Thekumparampil-2019-Efficient, Kong-2019-Accelerated}. 

We provide a head-to-head comparison between our results and existing results in the literature in Table~\ref{tab:results_cc} for convex-concave settings, and Table \ref{tab:results_nc} for nonconvex-concave settings.
\begin{table*}[!t]
\renewcommand{\arraystretch}{1.4}
\centering
\caption{Comparison of gradient complexities to find an $\epsilon$-stationary point of $f$ (Definition \ref{Def:eps_stationary}) or $\epsilon$-stationary point of $\Phi(\cdot):=\max_{\y\in\YCal} f(\cdot, \y)$ (Definition \ref{Def:NSC-Stationary}, \ref{Def:NC-Stationary}) in the nonconvex-concave settings. This table only highlights the dependence on tolerance $\epsilon$ and the condition number $\kappay$.}
\label{tab:results_nc}
\begin{tabular}{|c|c|c|} \hline
\textbf{Settings} & \textbf{References} & \textbf{Gradient Complexity} \\ \hline 
\multirow{5}{*}{\shortstack{Nonconvex-Strongly-Concave\\{\footnotesize(stationarity of $f$ or}\\{\footnotesize stationarity of $\Phi$)}}} & \citet{Jin-2019-Minmax} & \multirow{4}{*}{$\tilde{O}(\kappay^2\epsilon^{-2})$} \\ \cline{2-2}
& \citet{Rafique-2018-Non} & \\ \cline{2-2}
& \citet{Lin-2019-Gradient} & \\ \cline{2-2} 
& \citet{Lu-2019-Hybrid} & \\ \hhline{|~--|}
& \cellcolor{light-gray} \textbf{This paper} (Theorem \ref{thm:nsc} \& \ref{thm:nsc-Moreau}) & \cellcolor{light-gray} $\tilde{O}(\sqrt{\kappay}\epsilon^{-2})$ \\ \hline \hline
\multirow{3}{*}{\shortstack{Nonconvex-Concave\\{\footnotesize(stationarity of $f$)}}} & \citet{Lu-2019-Hybrid}  & $\tilde{O}(\epsilon^{-4})$ \\ \cline{2-3}
& \citet{Nouiehed-2019-Solving} & $\tilde{O}(\epsilon^{-3.5})$ \\ \hhline{|~--|}
& \citet{Ostrovskii-2020-Efficient} & $\tilde{O}(\epsilon^{-2.5})$ \\ \hhline{|~--|}
& \cellcolor{light-gray} \textbf{This paper} (Corollary \ref{cor:nc}) & \cellcolor{light-gray} $\tilde{O}(\epsilon^{-2.5})$ \\ \hline
\multirow{6}{*}{\shortstack{Nonconvex-Concave\\{\footnotesize(stationarity of $\Phi$)}}} & \citet{Jin-2019-Minmax}  & \multirow{3}{*}{$\tilde{O}(\epsilon^{-6})$} \\ \cline{2-2} 
& \citet{Rafique-2018-Non} & \\ \cline{2-2}
& \citet{Lin-2019-Gradient} & \\ \cline{2-3} 
& \citet{Thekumparampil-2019-Efficient} & \multirow{2}{*}{$\tilde{O}(\epsilon^{-3})$} \\ \cline{2-2} 
& \citet{Zhao-2020-Primal} & \\ \hhline{|~--|}
& \cellcolor{light-gray} \textbf{This paper} (Corollary \ref{cor:nc-Moreau}) & \cellcolor{light-gray} $\tilde{O}(\epsilon^{-3})$ \\ \hline
\end{tabular}
\end{table*}


\section{Related work}\label{sec:related_work}
To the best of our knowledge, the earliest algorithmic schemes for solving the bilinear minimax problem, $\min_{\x \in \Delta^m} \max_{\y \in \Delta^n} \x^\top A \y$, date back to Brown's fictitious play~\citep{Brown-1951-Iterative} and Dantzig's simplex method~\citep{Dantzig-1963-Linear}. This problem can also be solved by Korpelevich's extragradient (EG) algorithm~\citep{Korpelevich-1976-Extragradient}, which can be shown to be linearly convergent when $A$ is square and full rank~\citep{Tseng-1995-Linear}. There are also several recent papers studying the convergence of EG and its variants; see~\citet{Chambolle-2011-First, Malitsky-2015-Projected, Yadav-2018-Stabilizing} for reflected gradient descent ascent,~\citet{Daskalakis-2018-Training, Mokhtari-2019-Unified, Mokhtari-2019-Proximal} for optimistic gradient descent ascent (OGDA) and~\citet{Rakhlin-2013-Online, Rakhlin-2013-Optimization, Mertikopoulos-2018-Optimistic, Chavdarova-2019-Reducing, Hsieh-2019-Convergence, Mishchenko-2019-Revisiting} for other variants. In the bilinear setting, \citet{Daskalakis-2018-Training} established the convergence of the optimistic gradient descent ascent (OGDA) method to a neighborhood of the solution; \citet{Liang-2019-Interaction} proved the linear convergence of the OGDA algorithm using a dynamical system approach. Very recently, \citet{Mokhtari-2019-Unified} have proposed a unified framework for achieving the sharpest convergence rates of both EG and OGDA algorithms. 

For the convex-concave minimax problem, \citet{Nemirovski-2004-Prox} proved that his mirror-prox algorithm returns an $\epsilon$-saddle point within the gradient complexity of $O(\epsilon^{-1})$ when $\XCal$ and $\YCal$ are bounded. This algorithm was subsequently generalized by \citet{Auslender-2005-Interior} to a class of distance-generating functions, and the complexity result was extended to unbounded sets and composite objectives~\citep{Monteiro-2010-Complexity, Monteiro-2011-Complexity} using the hybrid proximal extragradient algorithm with different error criteria. \citet{Nesterov-2007-Dual} developed a dual extrapolation algorithm which possesses the same complexity bound as in~\citet{Nemirovski-2004-Prox}. Later on,~\citet{Tseng-2008-Accelerated} presented a unified treatment of these algorithms and a refined convergence analysis with same complexity result.~\citet{Nedic-2009-Subgradient} analyzed the (sub)gradient descent ascent algorithm for convex-concave saddle point problems when the (sub)gradients are bounded over the constraint sets.~\citet{Abernethy-2019-Last} presented a Hamiltonian gradient descent algorithm with last-iterate convergence under a ``sufficiently bilinear" condition. 

Several papers have studied special cases in the convex-concave setting. For the special case when the objective function is a composite bilinear form, $f(\x, \y) = g(\x) + \x^\top A\y - h(\y)$,~\citet{Chambolle-2011-First} introduced a primal-dual algorithm that converges to a saddle point with the rate of $O(1/\epsilon)$ when the convex functions $g$ and $h$ are smooth.~\citet{Nesterov-2005-Smooth} proposed a smoothing technique and proved that the resulting algorithm achieves an improved rate with better dependence on Lipschitz constant of $\nabla g$ when $h$ is the convex and smooth function and $\XCal, \YCal$ are both bounded.~\citet{He-2016-Accelerated} and~\citet{Kolossoski-2017-Accelerated} proved that such result also hold when $\XCal, \YCal$ are unbounded or the space is non-Euclidean.~\citet{Chen-2014-Optimal, Chen-2017-Accelerated} generalized Nesterov's technique to develop optimal algorithms for solving a class of stochastic saddle point problems and stochastic monotone variational inequalities. For a class of certain purely bilinear games where $g$ and $h$ are zero functions,~\citet{Azizian-2020-Accelerating} demonstrated that linear convergence is possible for several algorithms and their new algorithm achieved the tight bound. The second case is the so-called affinely constrained smooth convex problem, i.e., $\min_{\x \in \XCal} g(\x), \st \ A\x = \su$.~\citet{Esser-2010-General} proposed a $O(\epsilon^{-1})$ primal-dual algorithm while~\citet{Lan-2016-Iteration} provided a first-order augmented Lagrangian method with the same $O(\epsilon^{-1})$ rate. By exploiting the structure,~\citet{Ouyang-2015-Accelerated} proposed a near-optimal algorithm in this setting. 

For the strongly convex-concave minimax problem,~\citet{Tseng-1995-Linear} and~\citet{Nesterov-2006-Solving} proved that their algorithms find an $\epsilon$-saddle point with a gradient complexity of $\tilde{O}(\kappax + \kappay)$ using a variational inequality. Using a different approach,~\citet{Gidel-2018-A} and~\citet{Mokhtari-2019-Unified} derived the same complexity results for the OGDA algorithm. Very recently,~\citet{Alkousa-2019-Accelerated} proposed an accelerated gradient sliding algorithm with a gradient complexity of $\tilde{O}(\min\{\kappax\sqrt{\kappay}, \kappay\sqrt{\kappax}\})$ while~\citet{Ibrahim-2019-Lower} and~\citet{Zhang-2019-Lower} established a lower complexity bound of $\tilde{\Omega}(\sqrt{\kappax\kappay})$ among all the first-order algorithms in this setting. 

For strongly-convex-concave minimax problems, the best known general lower bound for first-order algorithm is $O(\sqrt{\kappax/\epsilon})$, as shown by~\citet{Ouyang-2019-Lower}. Several papers have studied strongly-convex-concave minimax problem with additional structures. This includex optimizing a strongly convex function with linear constraints~\citep{Goldstein-2014-Fast, Xu-2018-Accelerated, Xu-2019-Iteration}, the case when $\x$ and $\y$ are connected only through a bilinear term $\x^\top A \y$~\citep{Nesterov-2005-Smooth, Chambolle-2016-Ergodic, Xie-2019-Accelerated} and the case when $f(\x, \cdot)$ is linear for each $\x \in \br^m$~\citep{Juditsky-2011-First, Hamedani-2018-Primal, Zhao-2019-Optimal}. The algorithms developed in these works were all guaranteed to return an $\epsilon$-saddle point with a gradient complexity of $\tilde{O}(1/\sqrt{\epsilon})$ and some of them even achieve a near-optimal gradient complexity of $\tilde{O}(\sqrt{\kappax/\epsilon})$~\citep{Nesterov-2005-Smooth, Chambolle-2016-Ergodic}. However, the best known upper complexity bound for general strongly-convex-concave minimax problems is $O(\kappax/\sqrt{\epsilon})$ which was shown using the \textit{dual implicit accelerated gradient} algorithm~\citep{Thekumparampil-2019-Efficient}.

For nonconvex-concave minimax problems, a line of recent work~\citep{Jin-2019-Minmax, Rafique-2018-Non, Lin-2019-Gradient} has studied various algorithms and proved that they can find an approximate stationary point of $\Phi(\cdot) :=\max_{\y\in\YCal} f(\cdot, \y)$. In a deterministic setting, all of these algorithms guarantee a rate of $\tilde{O}(\kappay^2\epsilon^{-2})$ and $\tilde{O}(\epsilon^{-6})$ when $f(\x, \cdot)$ is strongly concave and concave respectively.~\citet{Thekumparampil-2019-Efficient} consider the same setting as ours and proposed a proximal dual implicit accelerated gradient algorithm and proved that it finds an approximate stationary point of $\Phi(\cdot)$ with the total gradient complexity of $\tilde{O}(\epsilon^{-3})$.~\citet{Kong-2019-Accelerated} consider a general nonconvex minimax optimization model: $\min_\x h(\x) + \rho(\x)$, where $h$ is a ``simple" proper, lower semi-continuous and convex function and $\rho(\x) = \max_{\y \in \YCal} f(\x, \y)$ with $f$ satisfying that $-f(\x, \cdot)$ is proper, convex, and lower semi-continuous. They propose to smooth $\rho$ to $\rho_\xi(\x) = \max_{\y \in \YCal} f(\x, \y) - (1/2\xi)\|\y - \y_0\|^2$ and apply an accelerated inexact proximal point method to solve the smoothed problem $\min_\x h(\x) + \rho_\xi(\x)$. The resulting AIPP-S algorithm attains the iteration complexity of $O(\epsilon^{-3})$ using a slightly different but equivalent notion of stationarity but requires the \textit{exact} gradient of $\rho_\xi$ at each iteration. This amounts to assuming that $\max_{\y \in \YCal} f(\x, \y) - (1/2\xi)\|\y - \y_0\|^2$ can be solved exactly, which is restrictive due to the potentially complicated structure of $f(\x, \cdot)$ or $\YCal$. If $f$ is further assumed to be smooth,~\citet{Zhao-2020-Primal} developed a variant of AIPP-S algorithm which only requires an inexact gradient of $\rho_\xi$ at each iteration and attains the total gradient complexity of $\tilde{O}(\epsilon^{-3})$. On the other hand, the stationarity of $f(\cdot, \cdot)$ is proposed for quantifying the efficiency in nonconvex-concave minimax optimization~\citep{Lu-2019-Hybrid, Nouiehed-2019-Solving, Kong-2019-Accelerated, Ostrovskii-2020-Efficient}. Using this notion of stationarity,~\citet{Kong-2019-Accelerated} attains the rate of $O(\epsilon^{-2.5})$ but requires the \textit{exact} gradient of $\rho_\xi$ at each iteration. Without this assumption, the current state-of-the-art rate is $\tilde{O}(\epsilon^{-2.5})$ achieved by our Algorithm~\ref{alg:Minimax_PG} and the algorithm proposed by a concurrent work~\citep{Ostrovskii-2020-Efficient}. Both algorithms are based on constructing an auxiliary function $f_{\epsilon, \y}$ and applying an accelerated solver for minimax proximal steps. Finally, several other algorithms have been developed either for specific nonconvex-concave minimax problems or in stochastic setting; see~\citet{Namkoong-2016-Stochastic, Sinha-2018-Certifiable, Sanjabi-2018-Solving, Grnarova-2018-An} for the details. 


\section{Preliminaries}\label{sec:setup}
In this section, we clarify the notation used in this paper, review some background and provide formal definitions for the class of functions and optimality measure considered in this paper. 
\paragraph{Notation.} We use bold lower-case letters to denote vectors, as in $\x, \y, \z$ and calligraphic upper case letters to denote sets, as in $\XCal$ and $\YCal$. For a differentiable function $f(\cdot): \br^n \rightarrow \br$, we le4t $\grad f(\z)$ denote the gradient of $f$ at $\z$. For a function $f(\cdot, \cdot): \br^m \times \br^n \rightarrow \br$ of two variables, $\gradx f(\x, \y)$ (or $\grady f(\x, \y)$) to denote the partial gradient of $f$ with respect to the first variable (or the second variable) at point $(\x, \y)$. We also use $\grad f(\x, \y)$ to denote the full gradient at $(\x, \y)$ where $\grad f(\x, \y) = (\gradx f(\x, \y), \grady f(\x, \y))$. For a vector $\x$, we denote $\|\x\|$ as its $\ell_2$-norm. For constraint sets $\XCal$ and $\YCal$, we let $\Dx$ and $\Dy$ denote their diameters, where $\Dx = \max_{\x, \x'\in \XCal} \norm{\x - \x'}$ and $\Dy = \max_{\y, \y' \in \YCal} \norm{\y - \y'}$. We use the notation $\PX$ and $\PY$ to denote projections onto the sets $\XCal$ and $\YCal$. Finally, we use the notation $O(\cdot), \Omega(\cdot)$ to hide only absolute constants which do not depend on any problem parameter, and notation $\tilde{O}(·), \tilde{\Omega}(·)$ to hide only absolute constants and log factors. 

\subsection{Minimax optimization}
We are interested in the $\ell$-smooth minimax optimization problems in the form~\eqref{prob:minimax}. The regularity conditions that we consider for the function $f$ are as follows.   
\begin{definition}\label{Def:Lipschitz}
A function $f$ is $L$-Lipschitz if for $\forall \z, \z' \in \br^n$, that $|f(\z) - f (\z')| \leq L\|\z - \z'\|$.
\end{definition}
\begin{definition}\label{Def:smoothness}
A function $f$ is $\ell$-smooth if for $\forall \z, \z' \in \br^n$, that $\|\grad f(\z) - \grad f (\z')\| \leq \ell\|\z - \z'\|$.
\end{definition}
\begin{definition}
A differentiable function $\phi: \br^d \rightarrow \br$ is $\mu$-strongly-convex if for any $\x', \x \in \br^d$:
\begin{equation*}
\phi(\x') \geq \phi(\x) + (\x' - \x)^\top\nabla\phi(\x) + (\mu/2)\|\x' - \x\|^2
\end{equation*}
Furthermore, $\phi$ is $\mu$-strongly-concave if $-\phi$ is $\mu$-strongly-convex. If we set $\mu=0$, then we recover the definitions of convexity and concavity for a continuous differentiable function.
\end{definition}

\paragraph{Convex-concave setting:} we assume that $f(\cdot, \y)$ is convex for each $\y \in \YCal$ and $f(\x, \cdot)$ is concave for each $\x \in \XCal$. \textit{Here $\XCal$ and $\YCal$ are both convex and bounded}. Under these conditions, the Sion's minimax theorem~\citep{Sion-1958-General} guarantees that
\begin{equation}\label{Def:Sion}
\max_{\y \in \YCal} \min_{\x \in \XCal} f(\x, \y) \ = \ \min_{\x \in \XCal} \max_{\y \in \YCal} f(\x, \y). 
\end{equation}
Furthermore, there exists at least one \textbf{saddle point (or Nash equilibrium)} $(\x^\star, \y^\star) \in \XCal \times \YCal$ such that the following equality holds true:
\begin{equation}\label{Def:Nash}
 \min_{\x \in \XCal} f(\x, \y^\star) \ = \ f(\x^\star, \y^\star) \ = \ \max_{\y \in \YCal} f(\x^\star, \y). 
\end{equation}
Therefore, for any point $(\hat{\x}, \hat{\y}) \in \XCal \times \YCal$, the duality gap $\max_{\y \in \YCal} f(\hat{\x}, \y) - \min_{\x \in \XCal} f(\x, \hat{\y})$ forms the basis for a standard optimality criterion. Formally, we define
\begin{definition}\label{Def:eps_saddle}
A point $(\hat{\x}, \hat{\y}) \in \XCal \times \YCal$ is an \textbf{$\epsilon$-saddle point} of a convex-concave function $f(\cdot, \cdot)$ if $\max_{\y \in \YCal} f(\hat{\x}, \y) - \min_{\x \in \XCal} f(\x, \hat{\y}) \leq \epsilon$. If $\epsilon = 0$, then $(\hat{\x}, \hat{\y})$ is a saddle point. 
\end{definition}

In the case when $f(\cdot, \y)$ is strongly convex for each $\y \in \YCal$ and $f(\x, \cdot)$ is strongly concave for each $\x \in \XCal$, we refer $\mu_\x$ and $\mu_\y$ to the strongly-convex or strongly-concave module. If $f$ is further $\ell$-smooth, we denote $\kappax = \ell/\mu_\x$ and $\kappay = \ell/\mu_\y$ as the condition numbers of $f(\cdot, \y)$ and $f(\x, \cdot)$. 

\paragraph{Nonconvex-concave setting:} we only assume that $f(\x, \cdot)$ is concave for each $\x \in \mathbb{R}^m$. The function $f(\cdot, \y)$ can be possibly nonconvex for some $\y \in \YCal$. \textit{Here $\XCal$ is convex but possibly unbounded while $\YCal$ is convex and bounded.} In general, finding a global Nash equilibrium of $f$ is intractable since in the special case where $\YCal$ has only a single element, this problem reduces to a nonconvex optimization problem in which finding a global minimum is already NP-hard~\citep{Murty-1987-Some}. Similar to the literature in nonconvex constrained optimization, we opt to find local surrogates---stationary points---whose gradient mappings are zero. Formally, we define our optimality criterion as follows. 
\begin{definition}\label{Def:eps_stationary}
A point $(\hat{\x}, \hat{\y}) \in \XCal \times \YCal$ is an \textbf{$\epsilon$-stationary point} of an $\ell$-smooth function $f(\cdot, \cdot)$ if 
\begin{equation*}
\ell\|\PX[\hat{\x} - (1/\ell)\gradx f(\hat{\x}, \hat{\y}^+)] - \hat{\x}\| \leq \epsilon, \quad \ell\|\hat{\y}^+ - \hat{\y}\| \leq \epsilon. 
\end{equation*}
where 
\begin{equation*}
\hat{\y}^+ = \PY[\hat{\y} + (1/\ell)\grady f(\hat{\x}, \hat{\y})]. 
\end{equation*}
If $\epsilon = 0$, then $(\hat{\x}, \hat{\y})$ is a stationary point.
\end{definition}
In the absence of the constraint set $\XCal$, Definition~\ref{Def:eps_stationary} reduces to the standard condition $\|\gradx f(\hat{\x}, \hat{\y}^+)\| \leq \epsilon$ and $\ell\|\hat{\y}^+ - \hat{\y}\| \leq \epsilon$ which is consistent with~\citet[Definition~4.10]{Lin-2019-Gradient}. Intuitively, the quantity $\|\PY[\hat{\y} + (1/\ell)\grady f(\hat{\x}, \hat{\y})] - \hat{\y}\|$ represents the distance between a point $\hat{\y}$ and a point obtained by performing one-step projected partial gradient ascent at a point $(\hat{\x}, \hat{\y})$ starting from a point $\hat{\y}$. It also refers to the norm of gradient mapping at $(\hat{\x}, \hat{\y})$; see~\citet{Nesterov-2013-Gradient} for the details. 

We note that this notion of stationarity of $f$ (Definition \ref{Def:eps_stationary}) is closely related to an optimality notion in terms of stationary points of the function $\Phi(\cdot):=\max_{\y \in \YCal} f(\cdot, \y)$ for nonconvex-concave functions. We refer readers to Appendix \ref{sec:envelope} for more discussion.
\begin{algorithm}[t]
\caption{$\textsc{AGD}(g, \XCal, \x_0, \ell, \mu, \epsilon)$}\label{alg:AGD}
\begin{algorithmic}[1]
\STATE \textbf{Input:} initial point $\x_0 \in \XCal$, smoothness $\ell$, strongly-convex module $\mu$ and tolerance $\epsilon>0$. 
\STATE \textbf{Initialize:} set $t \leftarrow 0$, $\tilde{\x}_0 \leftarrow \x_0$, $\eta \leftarrow 1/\ell$, $\kappa \leftarrow \ell/\mu$ and $\theta \leftarrow \frac{\sqrt{\kappa}-1}{\sqrt{\kappa}+1}$. 
\REPEAT 
\STATE $t \leftarrow t + 1$
\STATE $\x_{t} \leftarrow \proj_{\XCal}[\tilde{\x}_{t-1} - \eta\grad g(\tilde{\x}_{t-1})]$. \label{line:AGD_gradient}
\STATE $\tilde{\x}_{t} \leftarrow \x_{t} + \theta(\x_{t} - \x_{t-1})$.  \label{line:AGD_momentum}
\UNTIL{$\|\x_t - \proj_\XCal(\x_t - \eta\grad g(\x_t))\|^2 \leq \frac{\epsilon}{2\kappa^2(\ell-\mu)}$ is satisfied.} 
\label{line:AGD_term}
\STATE \textbf{Output:} $\proj_\XCal(\x_t - \eta\grad g(\x_t))$. 
\end{algorithmic}
\end{algorithm}

\subsection{Nesterov's accelerated gradient descent}
Nesterov's Accelerated Gradient Descent (AGD) dates back to the seminal paper~\citep{Nesterov-1983-Method} where it is shown to be optimal among all the first-order algorithms for smooth and convex functions~\citep{Nesterov-2018-Lectures}. We present a version of AGD in Algorithm~\ref{alg:AGD} which is frequently used to minimize an $\ell$-smooth and $\mu$-strongly convex function $g$ over a convex set $\XCal$. The key steps of the AGD algorithm are Line \ref{line:AGD_gradient}-\ref{line:AGD_momentum}, where Lines \ref{line:AGD_gradient} performs a projected gradient descent step, while Line \ref{line:AGD_momentum} performs a momentum step, which ``overshoots'' the iterate in the direction of momentum $(\x_t - \x_{t-1})$. Line~\ref{line:AGD_term} is the stopping condition to ensure that the output achieves the desired optimality.

The following theorem provides an upper bound on the gradient complexity of AGD; i.e., the total number of gradient evaluations to find an $\epsilon$-optimal point in terms of function value. 
\begin{theorem}\label{Theorem:AGD}
Assume that $g$ is $\ell$-smooth and $\mu$-strongly convex, the output $\hat{\x} = \textsc{AGD}(g, \x_0, \ell, \mu, \epsilon)$ satisfies $g(\hat{\x}) \leq \min_{\x\in \XCal} g(\x) + \epsilon$ and the total number of gradient evaluations is bounded by
\begin{equation*}
O\left(\sqrt{\kappa}\log\left(\frac{\kappa^3\ell\|\x_0 - \x^\star\|^2}{\epsilon}\right) \right), 
\end{equation*}
where $\kappa = \ell/\mu$ is the condition number, and $\x^\star \in \XCal$ is the unique global minimum of $g$.
\end{theorem}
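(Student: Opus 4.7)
The plan is to combine three ingredients: the classical convergence rate of Nesterov's AGD for smooth strongly convex functions, a two-way comparison between function value suboptimality and the squared norm of the gradient mapping, and a counting argument showing the stopping test in Line~\ref{line:AGD_term} is triggered within the claimed number of iterations.

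First, I would invoke the standard AGD guarantee: for an $\ell$-smooth and $\mu$-strongly convex $g$ with unique minimizer $\x^\star$ over the convex set $\XCal$, the iterates produced by Lines~\ref{line:AGD_gradient}--\ref{line:AGD_momentum} satisfy
\begin{equation*}
g(\x_t) - g(\x^\star) \;\leq\; \ell \left(1 - \tfrac{1}{\sqrt{\kappa}}\right)^{t} \|\x_0 - \x^\star\|^2 .
\end{equation*}
This is proved via the usual estimate sequence / Lyapunov function argument of \citet{Nesterov-2018-Lectures}, adapted to the projected setting (which is routine because the projected gradient step is a contraction with the same descent lemma up to constants).

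Next, I would relate the gradient mapping norm $\|\x_t - \proj_\XCal(\x_t - \eta \grad g(\x_t))\|$ to the function value gap $g(\x_t) - g(\x^\star)$. In one direction, using the descent lemma together with $\mu$-strong convexity one gets an inequality of the form
\begin{equation*}
\|\x_t - \proj_\XCal(\x_t - \eta \grad g(\x_t))\|^2 \;\leq\; \tfrac{C_1}{\ell}\bigl(g(\x_t) - g(\x^\star)\bigr),
\end{equation*}
for an absolute constant $C_1$. In the other direction, letting $\x_t^+ := \proj_\XCal(\x_t - \eta \grad g(\x_t))$ be the actual output, a standard projected-gradient calculation (descent along the prox step plus convexity of $g$) gives
\begin{equation*}
g(\x_t^+) - g(\x^\star) \;\leq\; C_2 (\ell - \mu)\, \kappa \,\|\x_t - \x_t^+\|^2 ,
\end{equation*}
so that the stopping threshold $\|\x_t - \x_t^+\|^2 \leq \epsilon/(2\kappa^2(\ell-\mu))$ indeed certifies $g(\x_t^+) \leq g(\x^\star) + \epsilon$.

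Combining these, the loop terminates as soon as $g(\x_t) - g(\x^\star) \leq c \cdot \epsilon/\kappa^3$ for a sufficiently small absolute constant $c$, since this forces the gradient-mapping based stopping test via the first inequality above. Plugging this target accuracy into the AGD rate and solving for $t$, the number of iterations until termination is bounded by
\begin{equation*}
O\!\left(\sqrt{\kappa}\,\log\!\left(\tfrac{\kappa^3 \ell\|\x_0 - \x^\star\|^2}{\epsilon}\right)\right).
\end{equation*}
Each iteration performs $O(1)$ gradient evaluations, yielding the stated complexity. The main obstacle I expect is formalizing the two-sided comparison between the gradient-mapping norm and the function value gap with explicit constants that produce exactly the $\kappa^3$ factor inside the logarithm; this is a careful but essentially mechanical bookkeeping exercise using the descent lemma and the firm nonexpansiveness of the projection.
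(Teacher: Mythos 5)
Your proposal follows the same three‑part structure as the paper's proof: (i) invoke the standard rate for Nesterov's AGD, (ii) relate the gradient‑mapping norm to the function‑value gap in both directions so that the stopping test in Line~\ref{line:AGD_term} is both triggered soon enough and certifies $\epsilon$‑optimality of the returned $\proj_\XCal(\x_t - \eta\grad g(\x_t))$, and (iii) count iterations, giving the $O(\sqrt{\kappa}\log(\kappa^3\ell\|\x_0-\x^\star\|^2/\epsilon))$ bound. This is essentially the paper's argument (Parts I--III of the proof of Theorem~\ref{Theorem:AGD}).

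One small quantitative slip worth flagging: your ``direction 2'' inequality $g(\x_t^+) - g(\x^\star) \leq C_2(\ell-\mu)\kappa\|\x_t-\x_t^+\|^2$ should have $\kappa^2$ rather than $\kappa$. The paper establishes this in two steps: first $\|\x_t - \x^\star\| \leq 2\kappa\|\x_t - \x_t^+\|$ (via the optimality conditions of $\x_t^+$ and $\x^\star$, smoothness and strong convexity), and then $g(\x_t^+) - g(\x^\star) \leq \tfrac{\ell-\mu}{2}\|\x_t - \x^\star\|^2$ (via \citealt[Cor.~2.3.2]{Nesterov-2018-Lectures} applied to the projected step), so the net factor is $2(\ell-\mu)\kappa^2$. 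The linear‑in‑$\kappa$ version you state does not follow from ``descent along the prox step plus convexity'' alone. This does not break the argument, because the stopping threshold $\epsilon/(2\kappa^2(\ell-\mu))$ was engineered precisely to absorb the $\kappa^2$ factor, but the constant accounting needs that extra power of $\kappa$. Similarly, your ``direction 1'' bound with $C_1/\ell$ is fine (and actually a bit tighter than the paper's route through $\|\x_t-\x^\star\|^2 \leq (2/\mu)(g(\x_t)-g^\star)$), so stating the target accuracy as $\epsilon/\kappa^3$ is a harmless over‑shoot that still lands inside the logarithm.
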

Compared with the classical result for Gradient Descent (GD), which requires $\tilde{O}(\kappa)$ gradient evaluations in the same setting, AGD improves over GD by a factor of $\sqrt{\kappa}$. AGD will be used as a basic component for acceleration in this paper.

\section{Algorithm Components} \label{sec:component}
In this section, we present two main algorithm components. Both of them are crucial for our final algorithms to achieve near-optimal convergence rates.
\begin{algorithm}[t]
\caption{$\textsc{Inexact-APPA}(g, \x_0, \ell, \mu, \epsilon, T)$}\label{alg:APPA}
\begin{algorithmic}[1]
\STATE \textbf{Input:} initial point $\x_0 \in \XCal$, proximal parameter $\ell$, strongly-convex module $\mu$, tolerance $\epsilon > 0$ and the maximum iteration number $T > 0$. 
\STATE \textbf{Initialize:} set $\tilde{\x}_0 \leftarrow \x_0$, $\kappa \leftarrow \frac{\ell}{\mu}$, $\delta \leftarrow \frac{\epsilon}{(10\kappa)^2}$ and $\theta \leftarrow \frac{2\sqrt{\kappa}-1}{2\sqrt{\kappa}+1}$. 
\FOR{$t = 1, \cdots, T$}
\STATE find $\x_{t}$ so that $g(\x_t) + \ell\|\x_t - \tilde{\x}_{t-1}\|^2 \leq \min_{\x\in\XCal} \{g(\x) + \ell\|\x - \tilde{\x}_{t-1}\|^2\} + \delta$.  \label{line:proximal}
\STATE $\tilde{\x}_{t} \leftarrow \x_{t} + \theta(\x_{t} - \x_{t-1})$. 
\ENDFOR
\STATE \textbf{Output:} $\x_T$. 
\end{algorithmic}
\end{algorithm}

\subsection{Inexact Accelerated Proximal Point Algorithm}
Our first component is the Accelerated Proximal Point Algorithm (APPA, Algorithm \ref{alg:APPA}) for minimizing a function $g(\cdot)$. Comparing APPA with classical AGD (Algorithm \ref{alg:AGD}), we note that both of them have momentum steps which yield acceleration. The major difference is in Line \ref{line:proximal} of Algorithm \ref{alg:APPA}, where APPA solves a proximal subproblem
\begin{equation} \label{eq:proximal}
\x_t \leftarrow \argmin_{\x\in\XCal} \ g(\x) + \ell \|\x - \tilde{\x}_{t-1}\|^2. 
\end{equation}
instead of performing a gradient-descent step as in AGD (Line \ref{line:AGD_gradient} in Algorithm \ref{alg:AGD}). We refer to the parameter $\ell$ in \eqref{eq:proximal} as the \emph{proximal parameter}.

We present an inexact version in Algorithm \ref{alg:APPA} where we tolerate a small error $\delta$ in terms of the function value in solving the proximal subproblem \eqref{eq:proximal}. That is, the solution $\x_t$ satisfies
\begin{equation*}
g(\x_t) + \ell\|\x_t - \tilde{\x}_{t-1}\|^2 \leq \min_{\x\in\XCal} \{g(\x) + \ell\|\x - \tilde{\x}_{t-1}\|^2\} + \delta.
\end{equation*}
A theoretical guarantee for the inexact APPA algorithm is presented in the following theorem, which claims that as long as $\delta$ is sufficiently small, the algorithm finds an $\epsilon$-optimal point of any $\mu$-strongly-convex function $g$ with proximal parameter $\ell$ in $\tilde{O}(\sqrt{\ell/\mu})$ iterations.
\begin{theorem}\label{Theorem:inexact-APPA}
Assume that $g$ is $\mu$-strongly convex, $\epsilon \in (0, 1)$ and $\ell>\mu$. There exists $T > 0$ such that the output $\hat{\x} = \textsc{Inexact-APPA}(g, \x_0, \ell, \mu, \epsilon, T)$ satisfies $g(\hat{\x}) \leq \min_{\x \in \XCal} g(\x) + \epsilon$ and $T > 0$ satisfies the following inequality, 
\begin{equation*}
T \geq \ c\sqrt{\kappa}\log\left(\frac{g(\x_0) - g(\x^\star) + (\mu/4)\|\x_0 - \x^\star\|^2}{\epsilon}\right),  
\end{equation*}
where $\kappa = \ell/\mu$ is an effective condition number, $\x^\star \in \XCal$ is the unique global minimum of $g$, and $c > 0$ is an absolute constant. 
\end{theorem}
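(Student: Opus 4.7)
\textbf{Proof proposal for Theorem \ref{Theorem:inexact-APPA}.}

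The plan is to follow the standard Catalyst / Accelerated Proximal Point analysis (as developed, e.g., by G\"uler and subsequently refined by Lin--Mairal and Monteiro--Svaiter), specialized to an inexact oracle that returns a point whose \emph{function value} on the proximal subproblem is within $\delta$ of optimum. The effective condition number governing the outer loop will be $\kappa=\ell/\mu$, and the role of the gradient step in classical AGD will be played by the (inexact) proximal operator. First I would introduce, for each $t$, the proximal objective $\Psi_t(\x)\!:=\!g(\x)+\ell\|\x-\tilde{\x}_{t-1}\|^2$, which is $(\mu+2\ell)$-strongly convex, and its exact minimizer $\x_t^\star\!:=\!\argmin_{\x\in\XCal}\Psi_t(\x)$. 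The inexactness assumption $\Psi_t(\x_t)\le\Psi_t(\x_t^\star)+\delta$ and strong convexity of $\Psi_t$ immediately yield the control $\|\x_t-\x_t^\star\|^2\le\delta/\ell$, which is what I need to pass the error from the inner to the outer loop.

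Next I would set up the Lyapunov-style argument. Writing $\x^\star$ for the minimizer of $g$ and $h(\x):=g(\x)-g(\x^\star)\ge 0$, the goal is to prove a recursion of the form
\begin{equation*}
h(\x_t)+\tfrac{\mu}{2}\|\z_t-\x^\star\|^2 \;\le\; \bigl(1-\tfrac{1}{2\sqrt{\kappa}}\bigr)\,\Bigl(h(\x_{t-1})+\tfrac{\mu}{2}\|\z_{t-1}-\x^\star\|^2\Bigr)+E_t,
\end{equation*}
where $\z_t$ is the standard ``dual'' sequence of the estimate-sequence construction (here driven by the proximal step rather than a gradient step) and $E_t$ is an error term of order $O(\delta)$ induced by $\|\x_t-\x_t^\star\|^2$. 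Two ingredients drive this recursion: (i) the firm nonexpansiveness / strong convexity inequality that would hold exactly at $\x_t^\star$, namely
\begin{equation*}
g(\x_t^\star)+2\ell\langle \x_t^\star-\tilde{\x}_{t-1},\,\x-\x_t^\star\rangle \le g(\x)-\tfrac{\mu}{2}\|\x-\x_t^\star\|^2,\quad\forall\x\in\XCal,
\end{equation*}
which is the prox-analogue of the descent lemma; and (ii) the momentum identity coming from the choice $\theta=(2\sqrt{\kappa}-1)/(2\sqrt{\kappa}+1)$, which is exactly the value that makes the coefficients in the telescoping argument match, giving the contraction factor $1-1/(2\sqrt{\kappa})$. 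The passage from $\x_t^\star$ to $\x_t$ costs only $\|\x_t-\x_t^\star\|\cdot(\text{bounded quantity})$ by Cauchy--Schwarz, leading to $E_t=O(\sqrt{\delta/\ell}\cdot R_t)$ where $R_t$ is the current ``radius''.

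The main obstacle is exactly this error bookkeeping: in any accelerated scheme the per-step error is amplified, so naive bounds give divergent accumulated error. The resolution is the fact that the contraction $(1-1/(2\sqrt{\kappa}))^t$ acts on both sides of the recursion; iterating and summing a geometric series with common ratio $1-1/(2\sqrt{\kappa})$ multiplies the accumulated error by a factor of at most $O(\sqrt{\kappa})$. Substituting the algorithmic choice $\delta=\epsilon/(10\kappa)^2$, the total accumulated error is then $O(\kappa\sqrt{\delta/\ell}\cdot D)\le \epsilon/2$ for a suitable initial radius $D$, which is where the specific constant $1/(10\kappa)^2$ was calibrated to land.

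Finally, I would close the loop. The base case of the recursion is bounded by $g(\x_0)-g(\x^\star)+(\mu/4)\|\x_0-\x^\star\|^2$ (using $\z_0=\x_0$ and adjusting the constant in front of $\|\z_0-\x^\star\|^2$ via strong convexity of $g$). Unrolling $T$ steps gives
\begin{equation*}
h(\x_T)\;\le\;\bigl(1-\tfrac{1}{2\sqrt{\kappa}}\bigr)^{T}\Bigl(g(\x_0)-g(\x^\star)+\tfrac{\mu}{4}\|\x_0-\x^\star\|^2\Bigr)+\tfrac{\epsilon}{2},
\end{equation*}
so that choosing $T\ge c\sqrt{\kappa}\log\bigl((g(\x_0)-g(\x^\star)+(\mu/4)\|\x_0-\x^\star\|^2)/\epsilon\bigr)$ with a sufficiently large absolute constant $c>0$ drives the first term below $\epsilon/2$ and yields $g(\hat{\x})-g(\x^\star)\le\epsilon$, as claimed.
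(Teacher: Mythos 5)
Your overall architecture matches the paper's: you build the Lyapunov quantity $h(\x_t)+c\mu\|\z_t-\x^\star\|^2$, derive a contraction with factor $1-\Theta(1/\sqrt{\kappa})$ via the momentum choice, unroll and accumulate per-step errors by a geometric series, then calibrate $T$. The inequality you write at $\x_t^\star$ is also equivalent (after rearranging) to what the paper uses inside its Lemma~\ref{Lemma:inexact-APPA}.

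However, there is a genuine gap in the error bookkeeping, and it is the crux of the whole analysis. You pass from $\x_t^\star$ to $\x_t$ by Cauchy--Schwarz and assert a per-step error $E_t=O(\sqrt{\delta/\ell}\cdot R_t)$, i.e.\ \emph{first order} in $\|\x_t-\x_t^\star\|=O(\sqrt{\delta/\ell})$. After the geometric accumulation (a factor $\Theta(\sqrt{\kappa})$), the residual is then $\Theta(\sqrt{\kappa}\sqrt{\delta/\ell}\,D)=\Theta(\sqrt{\epsilon}\,D/(\sqrt{\kappa\ell}))$ once you substitute $\delta=\epsilon/(10\kappa)^2$ --- this is a $\sqrt{\epsilon}$-scale quantity, not an $\epsilon$-scale one, and it does not drop below $\epsilon/2$ for small $\epsilon$. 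In other words, with a first-order-in-$\sqrt{\delta}$ error you would need $\delta=O(\epsilon^2/(\text{poly}(\kappa)D^2))$ to close, contradicting the algorithm's setting $\delta=\epsilon/(10\kappa)^2$. The paper avoids this precisely by \emph{not} applying Cauchy--Schwarz: in the proof of Lemma~\ref{Lemma:inexact-APPA} it applies Young's inequality so that the cross term $(\x-\x_t)^\top(\x_t-\x_t^\star)$ is split into a piece $\tfrac{\mu}{4(2\ell+\mu)}\|\x-\x_t\|^2$ (absorbed into the strong-convexity quadratic, which is why the Lyapunov function carries $\mu/4$ rather than $\mu/2$) and a piece $O(\kappa)\|\x_t-\x_t^\star\|^2=O(\kappa\delta/\ell)$, giving a purely $O(\kappa\delta)$ residual per iteration. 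Accumulating that over the geometric series then gives $O(\kappa^{3/2}\delta)\le\epsilon/2$ under $\delta=\epsilon/(10\kappa)^2$, which closes. You should replace your Cauchy--Schwarz step by exactly this Young's-inequality absorption; without it, the claimed bound on $T$ does not follow for the $\delta$ used in Algorithm~\ref{alg:APPA}.
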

Comparing with Theorem \ref{Theorem:AGD}, the most important difference here is that Theorem \ref{Theorem:inexact-APPA} does not require the function $g$ to have any smoothness property. In fact, $\ell$ is only a proximal parameter in proximal subproblem \eqref{eq:proximal}, which does not necessarily relate to the smoothness of $g$. On the flip side, the proximal subproblem~\eqref{eq:proximal} can not be easily solved in general. Theorem \ref{Theorem:inexact-APPA} guarantees the iteration complexity of Algorithm \ref{alg:AGD} while the complexity for solving these proximal steps is not discussed. 

We conclude that APPA has a unique advantage over AGD in settings where $g$ does not have a smoothness property but the proximal step \eqref{eq:proximal} is easy to solve. These settings include LASSO \citep{Beck-2009-Fast}, as well as minimax optimization problems (as we show in later sections).
\begin{algorithm}[!t]
\caption{$\textsc{\AGPROX}(g, \x_0, \y_0, \ell, \mu_\x, \mu_\y, \epsilon) $}\label{alg:Proxy}
\begin{algorithmic}[1]
\STATE \textbf{Input:} initial point $\x_0, \y_0$, smoothness $\ell$, strongly convex module $\mu_\x, \mu_\y$ and tolerance $\epsilon>0$.
\STATE \textbf{Initialize:} $t \leftarrow 0$, $\tilde{\x}_0 \leftarrow \x_0$, $\eta \leftarrow \frac{1}{2\kappax\ell}$, $\kappax \leftarrow \frac{\ell}{\mu_\x}$, $\kappay \leftarrow \frac{\ell}{\mu_\y}$, $\theta \leftarrow \frac{4\sqrt{\kappax\kappay}-1}{4\sqrt{\kappax\kappay}+1}$, $\tilde{\epsilon} \leftarrow \frac{\epsilon}{(10\kappax\kappay)^{7}}$. 
\REPEAT 
\STATE $t \leftarrow t+ 1$.
\STATE $\tilde{\x}_{t-1}\leftarrow \textsc{AGD}(g(\cdot, \tilde{\y}_{t-1}), \x_0, \ell, \mu_{\x}, \tilde{\epsilon})$. \label{line:AG2_start}
\STATE $\y_{t} \leftarrow \proj_\YCal[\tilde{\y}_{t-1} + \eta\grady g(\tilde{\x}_{t-1}, \tilde{\y}_{t-1})]$. 
\STATE $\tilde{\y}_{t} \leftarrow \y_{t} + \theta(\y_{t} - \y_{t-1})$. \label{line:AG2_end}
\STATE $\x_{t}\leftarrow \textsc{AGD}(g(\cdot, \y_t), \x_0, \ell, \mu_{\x}, \tilde{\epsilon})$. \label{line:AG2_term_start}
\UNTIL{$\|\y_t - \proj_\YCal(\y_t + \eta\grady g(\x_t, \y_t))\|^2 \leq \frac{\epsilon}{(10\kappax\kappay)^4 \ell}$ is satisfied.} \label{line:AG2_term_end}
\STATE \textbf{Output:} $\proj_\XCal(\x_t - (1/2\kappa_\y\ell)\gradx g(\x_t, \y_t))$. 
\end{algorithmic}
\end{algorithm}

\subsection{Accelerated Solver for Minimax Proximal Steps} \label{sec:subsolver}
In minimax optimization problems of the form \eqref{prob:minimax}, we are interested in solving the following proximal subproblem as follows, 
\begin{equation}\label{eq:prox_minimax_original_form}
\x_{t+1} \leftarrow \argmin_{\x\in\XCal} \Phi(\x) + \ell \norm{\x - \tilde{\x}}^2, \quad \textnormal{where } \ \Phi(\x) := \max_{\y\in\YCal} f(\x, \y),
\end{equation}
which is equivalent to solving the following minimax problem:
\begin{equation} \label{eq:prox_minimax}
\min_{\x\in\XCal} \max_{\y\in\YCal} ~ \tilde{g}(\x, \y) := f(\x, \y) + \ell \norm{\x - \tilde{\x}}^2.
\end{equation}
For a generic strongly-convex-strongly-concave function $g(\cdot, \cdot)$, solving a minimax problem is equivalent to solving a maximin problem, due to Sion's minimax theorem:
\begin{equation*}
\min_{\x \in \XCal} \max_{\y \in \YCal} g(\x, \y) = \max_{\y \in \YCal} \min_{\x \in \XCal} g(\x, \y).
\end{equation*}
A straightforward way of solving the maximin problem is to use a double-loop algorithm which solves the maximization and minimization problems on two different time scales. Specifically, the inner loop performs AGD on function $g(\cdot, \y)$ to solve the inner minimization; i.e., to compute $\Psi(\y) := \min_{\x\in \XCal} g(\x, \y)$ for each $\y$, and the outer loop performs Accelerated Gradient Ascent (AGA) on the function $\Psi(\cdot)$ to solve the outer maximization. Since the algorithm aims to solve a maximin problem we use AGA-AGD, and we name the algorithm \textsc{\AGPROX}. See Algorithm \ref{alg:Proxy} for the formal version of this algorithm. We also incorporate Lines \ref{line:AG2_term_start}-\ref{line:AG2_term_end} to check termination conditions, which ensures that the output achieves the desired optimality. The theoretical guarantee for Algorithm \ref{alg:Proxy} is given in the following theorem.
\begin{theorem}\label{Theorem:maximin-AG2}
Assume that $g(\cdot, \cdot)$ is $\ell$-smooth, $g(\cdot, \y)$ is $\mu_\x$-strongly convex for each $\y \in \YCal$ and $g(\x, \cdot)$ is $\mu_\y$-strongly concave for each $\x \in \XCal$. Then $\hat{\x} = \textsc{\AGPROX}(g, \x_0, \y_0, \ell, \mu_\x, \mu_\y, \epsilon)$ satisfies that $\max_{\y\in\YCal} g(\hat{\x}, \y) \leq \min_{\x \in \XCal} \max_{\y \in \YCal} g(\x, \y) + \epsilon$ and the total number of gradient evaluations is bounded by
\begin{equation*}
O\left(\kappax\sqrt{\kappay}\cdot \log^2\left(\frac{(\kappax+\kappay)\ell(\tilde{D}_\x^2 + \Dy^2)}{\epsilon}\right)\right),
\end{equation*}
where $\kappa_\x = \ell/\mu_\x$ and $\kappa_\y = \ell/\mu_\y$ are condition numbers, $\tilde{D}_\x = \|\x_0 - \x_g^\star(\y_0)\|$ is the initial distance where $\x_g^\star(\y_0) = \argmin_{\x \in \XCal} g(\x, \y_0)$ and $\Dy > 0$ is the diameter of the constraint set $\YCal$.
\end{theorem}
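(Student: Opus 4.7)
The plan is to view Algorithm~\ref{alg:Proxy} as an inexact accelerated gradient ascent on the outer function $\Psi(\y) := \min_{\x \in \XCal} g(\x, \y)$, with the inner AGD calls (Lines~\ref{line:AG2_start} and \ref{line:AG2_term_start}) producing approximate values of $\x_g^\star(\y) := \argmin_{\x \in \XCal} g(\x, \y)$ at a cost of $\tilde O(\sqrt{\kappax})$ gradient evaluations each by Theorem~\ref{Theorem:AGD}. The outer iterate count will then be $\tilde O(\sqrt{\kappax \kappay})$, giving the claimed bound $\tilde O(\kappax\sqrt{\kappay})$ on total gradient evaluations.

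First I would establish the regularity of $\Psi$. Because $g(\x,\cdot)$ is $\mu_\y$-strongly concave for every $\x$, $\Psi$ is $\mu_\y$-strongly concave as an infimum of strongly concave functions. Because $g(\cdot,\y)$ is $\mu_\x$-strongly convex and $g$ is $\ell$-smooth, Danskin's theorem gives $\nabla\Psi(\y) = \grady g(\x_g^\star(\y),\y)$, and the map $\y\mapsto \x_g^\star(\y)$ is $(\ell/\mu_\x)$-Lipschitz by a standard strong-convexity argument. Chaining with the $\ell$-smoothness of $g$ then yields that $\Psi$ is $\tilde{\ell}$-smooth for $\tilde{\ell} = \ell(1+\kappax) = O(\ell\kappax)$, so the effective condition number of $\Psi$ is $\kappa_\Psi = \tilde{\ell}/\mu_\y = O(\kappax\kappay)$. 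This justifies the choice of momentum parameter $\theta$ and step size $\eta$ in the outer loop.

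Next I would carry out an inexact AGA analysis. The algorithm replaces $\nabla\Psi(\tilde{\y}_{t-1}) = \grady g(\x_g^\star(\tilde{\y}_{t-1}), \tilde{\y}_{t-1})$ with the computable surrogate $\grady g(\tilde{\x}_{t-1}, \tilde{\y}_{t-1})$. Theorem~\ref{Theorem:AGD} guarantees $g(\tilde{\x}_{t-1},\tilde{\y}_{t-1}) - \min_\x g(\x,\tilde{\y}_{t-1}) \leq \tilde{\epsilon}$, and $\mu_\x$-strong convexity in $\x$ then yields $\|\tilde{\x}_{t-1} - \x_g^\star(\tilde{\y}_{t-1})\| \leq \sqrt{2\tilde{\epsilon}/\mu_\x}$; combined with the $\ell$-smoothness of $g$, this bounds the inexactness of each outer gradient by $\ell\sqrt{2\tilde{\epsilon}/\mu_\x}$. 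A standard inexact-AGD/AGA recursion (in the spirit of Schmidt--Le Roux--Bach or Devolder--Glineur--Nesterov) shows that with errors accumulating at most polynomially in $\kappa_\Psi$, the choice $\tilde{\epsilon} = \epsilon/(10\kappax\kappay)^7$ is small enough to guarantee $\Psi(\y^\star)-\Psi(\y_t) \leq \epsilon'$ for any prescribed $\epsilon'$ after $O(\sqrt{\kappax\kappay}\log((\kappax+\kappay)\ell(\tilde D_\x^2+\Dy^2)/\epsilon))$ outer steps. I would initialize the outer loop analysis using the bound $\Psi(\y^\star)-\Psi(\y_0) \leq O(\tilde{\ell}\Dy^2 + \ell \tilde{D}_\x^2)$, which accounts for both constraint-set diameter and initial $\x$-error.

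Finally I would translate the outer suboptimality into the promised output guarantee. The termination test in Line~\ref{line:AG2_term_end} controls the $\y$-side gradient mapping; combined with $\mu_\y$-strong concavity, this certifies $\|\y_t-\y_g^\star(\x_t)\|^2$ is $O(\epsilon/\ell\cdot \mathrm{poly}(\kappax,\kappay))$, so that the proximal-gradient step in $\x$ applied with step $1/(2\kappay\ell)$ produces an output $\hat\x$ satisfying $\max_{\y\in\YCal} g(\hat\x,\y) - \min_\x\max_\y g(\x,\y)\leq\epsilon$ via a standard gradient-mapping-to-function-gap bound in the strongly-convex-strongly-concave setting. Multiplying the $\tilde O(\sqrt{\kappax\kappay})$ outer iterations by the $\tilde O(\sqrt{\kappax})$ cost per inner AGD call (and absorbing the $\log(1/\tilde{\epsilon}) = O(\log((\kappax+\kappay)/\epsilon))$ factor) gives the stated bound $O(\kappax\sqrt{\kappay}\log^2((\kappax+\kappay)\ell(\tilde D_\x^2+\Dy^2)/\epsilon))$.

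The main obstacle will be the inexact-AGA analysis: tracking how the $\tilde{\epsilon}$-level errors in $\nabla\Psi$ propagate through acceleration without destroying the $\sqrt{\kappa_\Psi}$ rate. The key is that the momentum coupling magnifies errors by at most a polynomial factor in $\sqrt{\kappa_\Psi}$ over the $\sqrt{\kappa_\Psi}$ iterations, so setting $\tilde{\epsilon}$ polynomially small in $1/(\kappax\kappay)$ (as done with the exponent $7$ in the initialization) suffices to keep the accumulated error below $\epsilon$; a careful but routine bookkeeping argument makes this quantitative.
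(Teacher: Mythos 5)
Your proposal is correct and follows essentially the same route as the paper: regularity of $\Psi_g(\cdot)=\min_{\x\in\XCal}g(\x,\cdot)$ via Danskin and the $\kappax$-Lipschitzness of $\x_g^\star(\cdot)$, an outer inexact accelerated ascent on $\Psi_g$ whose gradient error is controlled through the $\tilde{\epsilon}$-accurate inner AGD and $\mu_\x$-strong convexity, and a final translation of the termination test into the duality-gap guarantee through the last projected gradient step. The only difference is that you delegate the error-accumulation bookkeeping to standard inexact-accelerated-gradient results (Schmidt--Le Roux--Bach, Devolder et al.), whereas the paper carries it out explicitly with a modified Nesterov estimate sequence (its Lemma~\ref{lemma:maximin-AG2}); given the aggressively small choice of $\tilde{\epsilon}$, this substitution is harmless.
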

Theorem~\ref{Theorem:maximin-AG2} claims that Algorithm~\ref{alg:Proxy} finds an $\epsilon$-optimal point in $\tilde{O}(\kappa_\x \sqrt{\kappa_\y})$ iterations for strongly-convex-strongly-concave functions. This rate does not match the lower bound $\tilde{\Omega}(\sqrt{\kappa_\x\kappa_\y})$~\citep{Ibrahim-2019-Lower, Zhang-2019-Lower}. At a high level, it takes AGD $\tilde{O}(\sqrt{\kappa_{\x}})$ steps to solve the inner minimization problem and compute $\Psi(\y) := \min_{\x\in \XCal} g(\x, \y)$. Despite the fact that the function $g$ is $\ell$-smooth, function $\Psi$ is only guaranteed to be $(\kappa_\x \ell)$-smooth in the worst case, which makes the condition number of $\Psi$ be $\kappa_{\x}\kappa_{\y}$. Thus, AGA requires $\tilde{O}(\sqrt{\kappa_{\x}\kappa_{\y}})$ iterations in the outer loop to solve the maximization of $\Psi$, which gives a total gradient complexity $\tilde{O}(\kappa_{\x}\sqrt{\kappa_{\y}})$.

The key observation here is that although Algorithm \ref{alg:Proxy} is slow for general strongly-convex-strongly-concave functions, the functions $\tilde{g}$ of the form \eqref{eq:prox_minimax} in the proximal steps have a crucial property that $\kappa_\x = O(1)$ if the proximal parameter $\ell$ is chosen to be the smoothness parameter of function $f$. Therefore, when $f(\x, \cdot)$ is strongly concave, by Theorem \ref{Theorem:maximin-AG2}, it only takes Algorithm \ref{alg:Proxy} $\tilde{O}(\sqrt{\kappa_{\y}})$ gradient evaluations to solve the proximal subproblem \eqref{eq:prox_minimax}, which is very efficient. We will see the consequences of this fact in the following section.


\section{Accelerating Convex-Concave Optimization}\label{sec:result_convex_concave}
In this section, we present our main results for accelerating convex-concave optimization. We first present our new near-optimal algorithm and its theoretical guarantee for optimizing strongly-convex-strongly-concave functions. Then, we use simple reduction arguments to obtain results for strongly-convex-concave and convex-concave functions.

\subsection{Strongly-convex-strongly-concave setting}
With the algorithm components from Section \ref{sec:component} in hand, we are now ready to state our near-optimal algorithm. Algorithm \ref{alg:Minimax_APG} is a simple combination of Algorithm \ref{alg:APPA} and Algorithm \ref{alg:Proxy}. Its outer loop performs an inexact APPA to minimize the function $\Phi(\cdot) := \max_{\y \in \YCal} f(\cdot, \y)$, while the inner loop uses \AGPROX~to solve the proximal subproblem \eqref{eq:prox_minimax_original_form}, which is equivalent to solving \eqref{eq:prox_minimax}. At the end, after finding a near-optimal $\x_T$, Algorithm \ref{alg:Minimax_APG} performs another AGD on the function $-f(\x_T, \cdot)$ to find a near-optimal $\y_T$. The theoretical guarantee for the algorithm is given in the following theorem.
\begin{theorem} \label{thm:scsc}
Assume that $f$ is $\ell$-smooth and $\mu_\x$-strongly-convex-$\mu_\y$-strongly-concave. Then there exists $T > 0$ such that the output $(\hat{\x}, \hat{\y}) = \textsc{\AGCC}(f, \x_0, \y_0, \ell, \mu_\x, \mu_\y, \epsilon, T)$ is an $\epsilon$-saddle point, and the total number of gradient evaluations is bounded by
\begin{equation*}
O\left(\sqrt{\kappa_\x \kappa_\y} \log^3 \left(\frac{(\kappax+\kappay) \ell (D_\x^2 + D_\y^2)}{\epsilon}\right)\right), 
\end{equation*}
where $\kappa_\x = \ell/\mu_\x$ and $\kappa_\y = \ell/\mu_\y$ are condition numbers.
\end{theorem}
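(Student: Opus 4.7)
My plan is to combine the two building blocks of Section~\ref{sec:component}: apply Inexact-APPA (Theorem~\ref{Theorem:inexact-APPA}) to the primal envelope $\Phi(\x) := \max_{\y\in\YCal} f(\x,\y)$, and use \textsc{\AGPROX} (Theorem~\ref{Theorem:maximin-AG2}) as the inexact proximal oracle. Because $f(\cdot,\y)$ is $\mu_\x$-strongly convex for every $\y$, the envelope $\Phi$ inherits $\mu_\x$-strong convexity. Thus, running inexact APPA on $\Phi$ with proximal parameter $\ell$ yields by Theorem~\ref{Theorem:inexact-APPA} an outer iteration count of $T = \tilde O(\sqrt{\kappa_\x})$ to produce $\x_T$ with $\Phi(\x_T) - \min_\x \Phi(\x) \leq \epsilon/2$, provided each proximal subproblem is solved to tolerance $\delta = \epsilon/\mathrm{poly}(\kappa_\x)$.

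The crucial observation is that the APPA proximal step $\argmin_\x\{\Phi(\x) + \ell\|\x - \tilde\x_{t-1}\|^2\}$ is equivalent to the minimax subproblem
\begin{equation*}
\min_{\x\in\XCal}\max_{\y\in\YCal} \tilde g_t(\x,\y), \qquad \tilde g_t(\x,\y) := f(\x,\y) + \ell\|\x - \tilde\x_{t-1}\|^2,
\end{equation*}
since $\max_\y \tilde g_t(\x,\y) = \Phi(\x) + \ell\|\x-\tilde\x_{t-1}\|^2$. Hence any $\x_t$ satisfying $\max_\y \tilde g_t(\x_t,\y) - \min_\x\max_\y \tilde g_t \leq \delta$ automatically meets the accuracy needed in Line~\ref{line:proximal} of Algorithm~\ref{alg:APPA}. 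The function $\tilde g_t$ is $O(\ell)$-smooth, $(\mu_\x + 2\ell)$-strongly convex in $\x$, and $\mu_\y$-strongly concave in $\y$, so its effective condition numbers are $\kappa_\x(\tilde g_t) = O(1)$ and $\kappa_\y(\tilde g_t) = O(\kappa_\y)$. By Theorem~\ref{Theorem:maximin-AG2}, \textsc{\AGPROX} solves each subproblem in $\tilde O(\sqrt{\kappa_\y})$ gradient evaluations. Multiplying by the $\tilde O(\sqrt{\kappa_\x})$ outer iterations and tracking the logarithms (one from APPA, two from \textsc{\AGPROX}) produces the advertised $\tilde O(\sqrt{\kappa_\x\kappa_\y}\,\log^3(\cdot))$ complexity.

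What remains is to upgrade the guarantee from ``$\hat\x$ nearly minimizes $\Phi$'' to ``$(\hat\x,\hat\y)$ is an $\epsilon$-saddle point''. After the outer loop returns $\hat\x := \x_T$, I will append one call to AGD on the $\ell$-smooth, $\mu_\y$-strongly convex function $-f(\hat\x,\cdot)$ to extract $\hat\y$ with $\max_\y f(\hat\x,\y) - f(\hat\x,\hat\y) \leq \epsilon/4$, which by Theorem~\ref{Theorem:AGD} costs only an additional $\tilde O(\sqrt{\kappa_\y})$ evaluations. To control the full duality gap $\Phi(\hat\x) - \Psi(\hat\y)$ with $\Psi(\y) := \min_\x f(\x,\y)$, I will use two facts: (i) $\Psi$ is itself $\mu_\y$-strongly concave (via the standard averaging argument applied to the $\mu_\y$-strong concavity of $f(\x,\cdot)$); and (ii) $\Psi(\y^\star) = \Phi(\x^\star)$ by Sion's theorem. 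Combining the quadratic-growth bound $\Psi(\y^\star) - \Psi(\hat\y) \leq \|\nabla\Psi(\hat\y)\|^2/(2\mu_\y)$ with the envelope identity $\nabla\Psi(\hat\y) = \nabla_\y f(\x^\star(\hat\y),\hat\y)$ and the triangle inequality
\begin{equation*}
\|\nabla\Psi(\hat\y)\| \leq \|\nabla_\y f(\hat\x,\hat\y)\| + \ell\,\|\hat\x - \x^\star(\hat\y)\|,
\end{equation*}
reduces the task to bounding $\|\nabla_\y f(\hat\x,\hat\y)\|$ (controlled by the final AGD tolerance) and $\|\hat\x - \x^\star(\hat\y)\|$ (controlled via quadratic growth of $\Phi$, stability of $\x^\star(\cdot)$, and the APPA tolerance). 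In the constrained case I will replace the gradient norm by the standard gradient-mapping quantity from Definition~\ref{Def:eps_stationary}. The main obstacle is this error-propagation bookkeeping: the tolerances $\delta$ (for \textsc{\AGPROX}) and $\epsilon'$ (for the final AGD) must be set polynomially small in $\epsilon/(\kappa_\x\kappa_\y)$ so that, after amplification by condition numbers and Lipschitz constants, the accumulated slack in the duality gap stays below $\epsilon$. These polynomial losses are absorbed into the logarithmic factors and so preserve the near-optimal $\tilde O(\sqrt{\kappa_\x\kappa_\y})$ rate.
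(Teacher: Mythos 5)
Your proposal follows essentially the same route as the paper's proof: inexact APPA on $\Phi(\cdot)=\max_{\y\in\YCal}f(\cdot,\y)$ with \textsc{\AGPROX} as the proximal oracle (exploiting that the subproblem has $\kappax$-condition number $O(1)$ and $\kappay$-condition number $O(\kappay)$), a final AGD on $-f(\hat{\x},\cdot)$, and an error-propagation argument through strong convexity/concavity and the Lipschitzness of the $\argmin$/$\argmax$ maps to convert near-optimality of $\Phi$ into an $\epsilon$-duality gap, with tolerances polynomially small in $\epsilon/(\kappax\kappay)$ absorbed into logarithms. The only material difference is in the last bookkeeping step: the paper appends one projected gradient ascent step to produce $\hat{\y}$ and bounds $\Psi(\y^\star)-\Psi(\hat{\y})$ via a first-order/Young-inequality argument using the $2\kappax\ell$-smoothness of $\Psi$, rather than your unconstrained quadratic-growth bound $\Psi(\y^\star)-\Psi(\hat{\y})\leq\|\nabla\Psi(\hat{\y})\|^2/(2\mu_\y)$, which, as you yourself note, must be replaced by its gradient-mapping analogue since $\YCal$ is a bounded constraint set.
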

Theorem \ref{thm:scsc} asserts that Algorithm \ref{alg:Minimax_APG} finds $\epsilon$-saddle points in $\tilde{O}(\sqrt{\kappa_\x \kappa_\y})$ gradient evaluations, matching the lower bound~\citep{Ibrahim-2019-Lower, Zhang-2019-Lower}, up to logarithmic factors. At a high level, despite the function $\Phi$ having undesirable smoothness properties, APPA minimizes $\Phi$ in the outer loop using $\tilde{O}(\sqrt{\kappa_\x})$ iterations according to Theorem \ref{Theorem:inexact-APPA}, regardless of the smoothness of $\Phi$. According to the discussion in Section \ref{sec:subsolver}, \AGPROX~solves the proximal step in the inner loop using $\tilde{O}(\sqrt{\kappa_\y})$ gradient evaluations, since the condition number of $g_t(\cdot, \y)$ for any $\y \in \YCal$ is $O(1)$. This gives the total gradient complexity $\tilde{O}(\sqrt{\kappa_\x\kappa_\y})$.
\begin{algorithm}[!t]
\caption{$\textsc{\AGCC}(f, \x_0, \y_0, \ell, \mu_\x, \mu_\y, \epsilon, T)$}\label{alg:Minimax_APG}
\begin{algorithmic}[1]
\STATE \textbf{Input:} initial point $\x_0, \y_0$, proximity $\ell$, strongly-convex parameter $\mu$, tolerance $\delta$, iteration $T$. 
\STATE \textbf{Initialize:} $\tilde{\x}_0 \leftarrow \x_0$, $\kappax \leftarrow \frac{\ell}{\mu_\x}$, $\theta \leftarrow \frac{2\sqrt{\kappax}-1}{2\sqrt{\kappax}+1}$, $\delta \leftarrow \frac{\epsilon}{(10\kappax\kappay)^4}$ and $\tilde{\epsilon} \leftarrow \frac{\epsilon}{10^2\kappax\kappay}$. 
\FOR{$t = 1, \cdots, T$}
\STATE denote $g_t(\cdot, \cdot)$ where $g_t(\x, \y) := f(\x, \y) + \ell \norm{\x - \tilde{\x}_{t-1}}^2$. \label{line:Minimax_prox}
\STATE $\x_t \leftarrow \textsc{\AGPROX}(g_t, \x_0, \y_0, 3\ell, 2\ell, \mu_\y, \delta)$
\STATE $\tilde{\x}_{t} \leftarrow \x_{t} + \theta(\x_{t} - \x_{t-1})$. 
\ENDFOR
\STATE $\tilde{\y} \leftarrow \textsc{AGD}(-f(\x_T, \cdot), \y_0, \ell, \mu_{\y}, \tilde{\epsilon})$.
\STATE $\y_T \leftarrow \proj_\YCal\left(\tilde{\y} + (1/2\kappa_\x\ell)\grady f(\x_T, \tilde{\y})\right)$. 
\STATE \textbf{Output:} $(\x_T, \y_T)$. 
\end{algorithmic}
\end{algorithm}

\subsection{Strongly-convex-concave setting}
Our result in the strongly-convex-strongly-concave setting readily implies a near-optimal result in the strongly-convex-concave setting. Consider the following auxiliary function for an arbitrary $\y_0 \in \YCal$ which is defined by
\begin{equation}\label{eq:reduction1}
f_{\epsilon, \y} (\x, \y) \ := \ f(\x, \y) - (\epsilon/4D^2_\y)\norm{\y - \y_0}^2. 
\end{equation}
By construction, it is clear that the difference between $f$ and $f_{\epsilon, \y}$ is small in terms of function value:
\begin{equation*}
\max_{(\x, \y) \in \XCal \times \YCal} |f(\x, \y) - f_{\epsilon, \y} (\x, \y)| \ \leq \ \epsilon/4. 
\end{equation*}
This implies, according to Definition \ref{Def:eps_saddle}, that any $(\epsilon/2)$-saddle point of function $f_{\epsilon, \y}$ is also a $\epsilon$-saddle point of function $f$, and thus it is sufficient to only solve the problem $\min_{\x\in \XCal}\max_{\x\in \YCal} f_{\epsilon, \y}(\x, \y)$. Finally, when $f$ is a $\mu_\x$-strongly-convex-concave function, $f_{\epsilon, \y}$ becomes $\mu_\x$-strongly-convex-$\epsilon/(2D_\y^2)$-strongly-concave, which can be fed into Algorithm \ref{alg:Minimax_APG} to obtain the following result.
\begin{corollary} \label{cor:scc}
Assume that $f$ is $\ell$-smooth and $\mu_\x$-strongly-convex-concave. Then there exists $T>0$ such that the output $(\hat{\x}, \hat{\y}) = \textsc{\AGCC}(f_{\epsilon, \y}, \x_0, \y_0, \ell, \mu_\x, \epsilon/(4D^2_{\y}), \epsilon/2, T)$ is an $\epsilon$-saddle point, and the total number of gradient evaluations is bounded by
\begin{equation*}
O\left(\sqrt{\frac{\kappa_\x \ell}{\epsilon}}\Dy \log^3 \left(\frac{\kappax \ell (D_\x^2 + D_\y^2)}{\epsilon}\right)\right)
\end{equation*}
where $\kappa_\x = \ell/\mu_\x$ is the condition number, and $f_{\epsilon, \y}$ is defined as in \eqref{eq:reduction1}.
\end{corollary}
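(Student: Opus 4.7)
The plan is to apply Theorem~\ref{thm:scsc} to the regularized function $f_{\epsilon,\y}$ defined in \eqref{eq:reduction1}, converting the strongly-convex-concave instance into a strongly-convex-strongly-concave one that the near-optimal algorithm \AGCC\ can handle directly. The proof has three short pieces: verifying the regularity of $f_{\epsilon,\y}$, justifying the saddle-point reduction, and doing the complexity arithmetic.

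First, I would check the regularity of $f_{\epsilon,\y}$. The added quadratic $(\epsilon/(4D_\y^2))\|\y-\y_0\|^2$ has Hessian $(\epsilon/(2D_\y^2))I$ in $\y$ and zero Hessian in $\x$, so $f_{\epsilon,\y}$ is $(\ell+\epsilon/(2D_\y^2))$-smooth, $\mu_\x$-strongly convex in $\x$, and $(\epsilon/(2D_\y^2))$-strongly concave in $\y$; in particular it is $(\epsilon/(4D_\y^2))$-strongly concave, matching the parameter fed into \AGCC. In the relevant regime $\epsilon \leq 2\ell D_\y^2$ the inflated smoothness is within a constant factor of $\ell$ and may be absorbed into the big-$O$; the opposite regime trivialises since it forces $\kappay^{\text{new}} = O(1)$ and the claimed bound becomes $\tilde O(\sqrt{\kappax})$, immediate from Theorem~\ref{thm:scsc}.

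Second, I would establish the saddle-point reduction. Since $\|\y-\y_0\|\le D_\y$, uniformly on $\XCal\times\YCal$ we have $|f(\x,\y) - f_{\epsilon,\y}(\x,\y)| \leq \epsilon/4$. Comparing the two inner optimisations term by term yields
\begin{equation*}
\max_{\y\in\YCal} f(\hat\x,\y) - \min_{\x\in\XCal} f(\x,\hat\y) \;\leq\; \max_{\y\in\YCal} f_{\epsilon,\y}(\hat\x,\y) - \min_{\x\in\XCal} f_{\epsilon,\y}(\x,\hat\y) \;+\; \tfrac{\epsilon}{2},
\end{equation*}
so any $(\epsilon/2)$-saddle point of $f_{\epsilon,\y}$ is automatically an $\epsilon$-saddle point of $f$ in the sense of Definition~\ref{Def:eps_saddle}.

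Third, I would invoke Theorem~\ref{thm:scsc} on $f_{\epsilon,\y}$ with $(\mu_\x, \mu_\y^{\text{new}}) = (\mu_\x,\epsilon/(4D_\y^2))$ and target accuracy $\epsilon/2$. The theorem guarantees the existence of $T>0$ such that the output is an $(\epsilon/2)$-saddle, with gradient complexity
\begin{equation*}
O\!\left(\sqrt{\kappax \kappa_\y^{\text{new}}}\,\log^3\!\Big(\tfrac{(\kappax+\kappa_\y^{\text{new}})\ell(D_\x^2+D_\y^2)}{\epsilon}\Big)\right).
\end{equation*}
Substituting $\kappa_\y^{\text{new}} = \ell/\mu_\y^{\text{new}} = 4\ell D_\y^2/\epsilon$ gives $\sqrt{\kappax \kappa_\y^{\text{new}}} = 2 D_\y\sqrt{\kappax \ell/\epsilon}$, while the $\kappa_\y^{\text{new}}$ appearing inside the logarithm only changes that argument by a polynomial factor in $\ell,D_\y,\epsilon^{-1}$ which collapses into the stated $\log^3(\kappax\ell(D_\x^2+D_\y^2)/\epsilon)$. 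Combining the two pieces yields exactly the bound claimed in the corollary.

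No step is genuinely hard; the only care needed is bookkeeping the constants so that the mild smoothness inflation, the conservative choice of $\epsilon/(4D_\y^2)$ (instead of the true $\epsilon/(2D_\y^2)$) for strong concavity, and the halved target accuracy $\epsilon/2$ all compose cleanly into the stated complexity.
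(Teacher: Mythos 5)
Your proposal follows the paper's own proof step for step: invoke Theorem~\ref{thm:scsc} on the regularized function $f_{\epsilon,\y}$, observe that the added quadratic perturbs function values on the bounded set $\YCal$ by at most $\epsilon/4$ so that an $(\epsilon/2)$-saddle of $f_{\epsilon,\y}$ is an $\epsilon$-saddle of $f$, and substitute $\kappa_\y = \Theta(\ell D_\y^2/\epsilon)$ into the complexity bound. The only thing you do beyond the paper is explicitly flag the mild smoothness inflation $\ell \mapsto \ell + \epsilon/(2D_\y^2)$ and the conservative $\epsilon/(4D_\y^2)$ vs.\ the true $\epsilon/(2D_\y^2)$ strong-concavity modulus, both of which the paper silently absorbs into the big-$O$; this is harmless extra care, not a different route.
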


\subsection{Convex-concave setting}
Similar to the previous subsection, when $f$ is only convex-concave,  we can construct following strongly-convex-strongly-concave function $f_\epsilon$:
\begin{equation}\label{eq:reduction2}
f_{\epsilon}(\x, \y) = f(\x, \y) + (\epsilon/8\Dx^2)\norm{\x - \x_0}^2 - (\epsilon/8\Dy^2)\norm{\y - \y_0}^2,
\end{equation}
which can be fed into Algorithm \ref{alg:Minimax_APG} to obtain the following result.
\begin{corollary} \label{cor:cc}
Assume function $f$ is $\ell$-smooth and convex-concave, then there exists $T>0$, where the output $(\hat{\x}, \hat{\y}) = \textsc{\AGCC}(f_{\epsilon}, \x_0, \y_0, \ell, \epsilon/(4\Dx^2), \epsilon/(4\Dy^2), \epsilon/2, T)$ will be an $\epsilon$-saddle point, and the total number of gradient evaluations is bounded by
\begin{equation*}
O\left(\frac{\ell\Dx\Dy}{\epsilon} \log^3 \left(\frac{\ell (D_\x^2 + D_\y^2)}{\epsilon}\right)\right),
\end{equation*}
where $f_{\epsilon}$ is defined as in \eqref{eq:reduction2}.
\end{corollary}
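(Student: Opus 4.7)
The plan is to reduce the convex-concave problem to the strongly-convex-strongly-concave setting already solved by Theorem~\ref{thm:scsc}, applied to the regularized surrogate $f_\epsilon$ defined in \eqref{eq:reduction2}. The reasoning proceeds in three conceptual steps: (i) control the approximation error between $f$ and $f_\epsilon$, (ii) verify the regularity parameters needed by Algorithm~\ref{alg:Minimax_APG}, and (iii) plug them into the complexity bound from Theorem~\ref{thm:scsc}.

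First, I would use the diameter bounds $\|\x-\x_0\|\leq \Dx$ and $\|\y-\y_0\|\leq \Dy$ for $(\x,\y)\in\XCal\times\YCal$ to obtain $|f(\x,\y)-f_\epsilon(\x,\y)|\leq \epsilon/8+\epsilon/8=\epsilon/4$ uniformly on $\XCal\times\YCal$. Consequently, for any candidate point $(\hat\x,\hat\y)$,
\begin{equation*}
\max_{\y\in\YCal} f(\hat\x,\y) - \min_{\x\in\XCal} f(\x,\hat\y) \;\leq\; \Bigl[\max_{\y\in\YCal} f_\epsilon(\hat\x,\y) - \min_{\x\in\XCal} f_\epsilon(\x,\hat\y)\Bigr] + \tfrac{\epsilon}{2},
\end{equation*}
so any $(\epsilon/2)$-saddle point of $f_\epsilon$ is automatically an $\epsilon$-saddle point of $f$, justifying the choice of tolerance $\epsilon/2$ fed into \textsc{\AGCC}.

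Second, I would check that $f_\epsilon$ satisfies the hypotheses of Theorem~\ref{thm:scsc} with the parameters used in the corollary statement. By direct differentiation, $f_\epsilon$ is $\tilde\ell$-smooth with $\tilde\ell=\ell+\epsilon/(4\Dx^2)+\epsilon/(4\Dy^2)$; in the nontrivial regime where $\epsilon\leq \ell\min(\Dx^2,\Dy^2)$ (the complementary regime makes the claimed bound vacuous because $\ell\Dx\Dy/\epsilon=O(1)$), we have $\tilde\ell=\Theta(\ell)$. The quadratic perturbations make $f_\epsilon(\cdot,\y)$ $\mu_\x$-strongly convex with $\mu_\x=\epsilon/(4\Dx^2)$ and $f_\epsilon(\x,\cdot)$ $\mu_\y$-strongly concave with $\mu_\y=\epsilon/(4\Dy^2)$, matching the arguments passed to \textsc{\AGCC}. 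The resulting condition numbers are $\kappa_\x=\tilde\ell/\mu_\x=\Theta(\ell\Dx^2/\epsilon)$ and $\kappa_\y=\tilde\ell/\mu_\y=\Theta(\ell\Dy^2/\epsilon)$.

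Third, I would invoke Theorem~\ref{thm:scsc} on $f_\epsilon$ with tolerance $\epsilon/2$, which produces an $(\epsilon/2)$-saddle point of $f_\epsilon$ within
\begin{equation*}
O\!\left(\sqrt{\kappa_\x\kappa_\y}\,\log^3\!\left(\tfrac{(\kappa_\x+\kappa_\y)\tilde\ell(\Dx^2+\Dy^2)}{\epsilon}\right)\right) \;=\; O\!\left(\tfrac{\ell\Dx\Dy}{\epsilon}\,\log^3\!\left(\tfrac{\ell(\Dx^2+\Dy^2)}{\epsilon}\right)\right)
\end{equation*}
gradient evaluations, where the logarithmic term simplifies because $(\kappa_\x+\kappa_\y)\tilde\ell/\ell$ is polynomial in $\ell(\Dx^2+\Dy^2)/\epsilon$ and only contributes a constant factor to the cube of the logarithm. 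Combined with step one, this point is an $\epsilon$-saddle point of $f$, proving the corollary. The main obstacle I anticipate is the bookkeeping around the effective smoothness $\tilde\ell$ versus the parameter $\ell$ that is literally passed to Algorithm~\ref{alg:Minimax_APG}; this is resolved cleanly by separating the easy regime $\epsilon>\ell\min(\Dx^2,\Dy^2)$ (where the bound is trivial) from the nontrivial regime in which $\tilde\ell=\Theta(\ell)$, so substituting $\ell$ for $\tilde\ell$ in the invocation only affects absolute constants.
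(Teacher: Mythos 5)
Your proposal is correct and follows essentially the same route as the paper's own proof: reduce to Theorem~\ref{thm:scsc} applied to $f_\epsilon$, use the $\epsilon/4$ approximation between $f$ and $f_\epsilon$ (the paper bounds the two regularizer terms one-sidedly rather than via a uniform bound, to the same effect) to conclude that an $(\epsilon/2)$-saddle point of $f_\epsilon$ is an $\epsilon$-saddle point of $f$, and then substitute $\kappa_\x = \Theta(\ell\Dx^2/\epsilon)$ and $\kappa_\y = \Theta(\ell\Dy^2/\epsilon)$ into the complexity bound. Your additional bookkeeping on the effective smoothness $\tilde\ell$ addresses a point the paper silently ignores, though the aside that the complementary regime $\epsilon > \ell\min(\Dx^2,\Dy^2)$ renders the bound $O(1)$ is not literally correct (that would require $\epsilon \gtrsim \ell\Dx\Dy$); in the main regime your argument coincides with the paper's.
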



\section{Accelerating Nonconvex-Concave Optimization} \label{sec:result_nonconvex_concave}
In this section, we present methods for accelerating nonconvex-concave optimization. Similar to Section \ref{sec:result_convex_concave}, we first present our algorithm and its theoretical guarantee for optimizing nonconvex-strongly-concave functions. We then use a simple reduction argument to obtain results for nonconvex-concave functions. This section present results using the stationarity of the function $f$ (Definition \ref{Def:eps_stationary}) as an optimality measure. Please see Appendix~\ref{sec:app-nonconvex} for additional results using the stationarity of the function $\Phi(\cdot):=\max_{y\in \YCal} f(\cdot, \y)$ as the optimality measure (Definition \ref{Def:NSC-Stationary} and \ref{Def:NC-Stationary}).

\subsection{Nonconvex-strongly-concave setting}
Our algorithm for nonconvex-strongly-concave optimization is described in Algorithm \ref{alg:Minimax_PG}. Similar to Algorithm \ref{alg:Minimax_APG}, we still use our accelerated solver \AGPROX~for the same proximal subproblem in the inner loop. The only minor difference is that, in the outer loop, Algorithm \ref{alg:Minimax_PG} only uses the Proximal Point Algorithm (PPA) on function $\Phi(\cdot):= \max_{y\in \YCal}f(\cdot, \y)$ without acceleration (or momentum steps). This is due to fact that gradient descent is already optimal among all first-order algorithm for finding stationary points of smooth nonconvex functions~\citep{Carmon-2019-LowerI}. The standard acceleration technique will not help for smooth nonconvex functions. We presents the theoretical guarantees for Algorithm \ref{alg:Minimax_PG} in the following theorem.
\begin{theorem}\label{thm:nsc}
Assume that $f$ is $\ell$-smooth and $f(\x, \cdot)$ is $\mu_\y$-strongly-concave for all $\x$.  Then there exists $T > 0$ such that the output $(\hat{\x}, \hat{\y}) = \textsc{\AGNC}(f, \x_0, \y_0, \ell, \mu_{\y}, \epsilon, T)$ is an $\epsilon$-stationary point of $f$ with probability at least $2/3$, and the total number of gradient evaluations is bounded by
\begin{equation*}
O\left(\frac{\ell \Delta_\Phi}{\epsilon^2}\cdot \sqrt{\kappa_\y} \log^2 \left(\frac{\kappa_\y \ell(\tilde{D}_\x^2 + \Dy^2)}{\epsilon}\right)\right),
\end{equation*}
where $\kappa_\y = \ell/\mu_\y$ is the condition number, $\Delta_\Phi = \Phi(\x_0) - \min_{\x \in \br^m} \Phi(\x)$ is the initial function value gap and $\tilde{D}_\x = \|\x_0 - \x_{g_1}^\star(\y_0)\|$ is the initial distance where $\x_g^\star(\y_0) = \argmin_{\x \in \XCal} g(\x, \y_0)$. 
\end{theorem}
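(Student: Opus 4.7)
The plan is to analyze the outer Proximal Point Algorithm (PPA) on the primal envelope $\Phi(\x):=\max_{\y\in\YCal} f(\x,\y)$, treating each inner \textsc{\AGPROX} call as an inexact subproblem oracle whose error I will control via Theorem \ref{Theorem:maximin-AG2}. The first ingredient is the standard Danskin-type result: when $f$ is $\ell$-smooth and $\mu_\y$-strongly concave in $\y$, the best response $\y^\star(\x):=\argmax_{\y\in\YCal} f(\x,\y)$ is $\kappa_\y$-Lipschitz, $\Phi$ is differentiable with $\grad\Phi(\x)=\gradx f(\x,\y^\star(\x))$, and $\Phi$ is $2\kappa_\y\ell$-smooth. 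Next, I exploit the key observation from Section \ref{sec:subsolver}: taking the proximal parameter equal to the smoothness $\ell$ of $f$, each inner function $g_t(\x,\y)=f(\x,\y)+\ell\|\x-\x_{t-1}\|^2$ is $\ell$-strongly convex and $3\ell$-smooth in $\x$ (so $\kappax=O(1)$) and $\mu_\y$-strongly concave in $\y$, whence Theorem \ref{Theorem:maximin-AG2} delivers an $\delta$-optimal inner solution in $\tilde O(\sqrt{\kappa_\y})$ gradient evaluations.

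The second step is an approximate descent inequality for $\Phi$ across one outer iteration. Let $\x_{t+1}^\star=\argmin_{\x\in\XCal}\{\Phi(\x)+\ell\|\x-\x_t\|^2\}$ and let $\x_{t+1}$ be the actual output of \textsc{\AGPROX}. Strong convexity of the primal proximal objective (with modulus $\ell$) together with the $\delta$-suboptimality certificate from Theorem \ref{Theorem:maximin-AG2} gives
\begin{equation*}
\Phi(\x_{t+1})+\ell\|\x_{t+1}-\x_t\|^2 \;\le\; \Phi(\x_t)+O(\delta),
\end{equation*}
so telescoping yields $\sum_{t=0}^{T-1}\ell\|\x_{t+1}-\x_t\|^2\le \Delta_\Phi + T\cdot O(\delta)$. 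Choosing $\delta=\Theta(\epsilon^2/\ell)$ and $T=\Theta(\ell\Delta_\Phi/\epsilon^2)$, and selecting the output index uniformly at random from $\{0,\dots,T-1\}$, Markov's inequality then forces $\ell\|\x_{t+1}-\x_t\|^2\le 3(\Delta_\Phi/T+O(\delta))=O(\epsilon^2/\ell)$ with probability at least $2/3$, which is the source of the probabilistic claim in the statement.

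The third step converts this small displacement into the stationarity conditions in Definition \ref{Def:eps_stationary}. From the inexact first-order optimality of the inner max-min problem I obtain $-2\ell(\x_{t+1}-\x_t)\in \gradx f(\x_{t+1},\tilde\y_{t+1})+N_\XCal(\x_{t+1})$ up to an $O(\sqrt{\ell\delta})$ residual, where $\tilde\y_{t+1}$ is \textsc{\AGPROX}'s inner iterate, itself $O(\sqrt{\delta/\mu_\y})$-close to $\y^\star(\x_{t+1})$ by strong concavity. Setting $\hat\x:=\x_{t+1}$ and running the terminal AGD on $-f(\hat\x,\cdot)$ to accuracy $\tilde\epsilon=\Theta(\epsilon^2/(\kappa_\y\ell))$ produces $\hat\y$ with $\|\hat\y-\y^\star(\hat\x)\|=O(\epsilon/(\kappa_\y\ell)^{1/2}\mu_\y^{1/2})$ by Theorem \ref{Theorem:AGD} in $\tilde O(\sqrt{\kappa_\y})$ additional gradient evaluations. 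Then $\hat\y^+=\PY[\hat\y+(1/\ell)\grady f(\hat\x,\hat\y)]$ is within $O(\epsilon/\ell)$ of $\hat\y$ (non-expansiveness of $\PY$ combined with the fact that $\y^\star(\hat\x)$ is a fixed point of this map), giving the second inequality $\ell\|\hat\y^+-\hat\y\|\le\epsilon$. For the first inequality I swap $\gradx f(\hat\x,\hat\y^+)$ for $\gradx f(\hat\x,\y^\star(\hat\x))=\grad\Phi(\hat\x)$; the induced error is at most $\ell\|\hat\y^+-\y^\star(\hat\x)\|=O(\epsilon)$, so the gradient mapping of $\Phi$ controls the gradient mapping in Definition \ref{Def:eps_stationary} up to $O(\epsilon)$. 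Finally, multiplying the outer count $T=O(\ell\Delta_\Phi/\epsilon^2)$ by the per-iteration inner cost $\tilde O(\sqrt{\kappa_\y})$ and adding the single terminal AGD call produces the claimed bound.

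The main obstacle will be the error bookkeeping: the inner solver returns only an $\epsilon$-saddle of $g_t$, so the standard prox-descent argument, the identification of $2\ell(\x_{t+1}-\x_t)$ with $\grad\Phi(\x_{t+1})$, and the translation from stationarity of $\Phi$ to stationarity of $f$ must each be stated as \emph{inexact} relations, and the tolerances $\delta$ and $\tilde\epsilon$ must be chosen simultaneously small enough that all error terms remain $O(\epsilon)$ without inflating the $\log^2$ factor inherited from Theorem \ref{Theorem:maximin-AG2}. The fact that no outer acceleration is used actually simplifies this bookkeeping — the nonconvex PPA tolerates a $\Theta(\epsilon^2/\ell)$ inner error per step rather than the much tighter $\text{poly}(1/\kappa)$ required by an inexact APPA — but care is still needed to ensure the random-index device yields a high-probability conclusion without rerunning the whole procedure.
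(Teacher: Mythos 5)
Your overall architecture coincides with the paper's: an inexact proximal-point outer loop on $\Phi$, the descent inequality $\Phi(\x_{t+1})+\ell\|\x_{t+1}-\x_t\|^2\le\Phi(\x_t)+\delta$, telescoping combined with a uniformly random output index and Markov's inequality for the probability-$2/3$ claim, \textsc{\AGPROX} with $\kappax=O(1)$ as the inner solver, a terminal AGD in $\y$, and the same assembly $T=O(\ell\Delta_\Phi/\epsilon^2)$ times $\tilde O(\sqrt{\kappay})$ per iteration. The genuine gap is in your step 3. Theorem~\ref{Theorem:maximin-AG2} certifies only primal $\delta$-optimality, $\max_{\y\in\YCal}g_t(\x_{t+1},\y)\le\min_{\x\in\XCal}\max_{\y\in\YCal}g_t(\x,\y)+\delta$; it does not give the inclusion $-2\ell(\x_{t+1}-\x_t)\in\gradx f(\x_{t+1},\tilde{\y}_{t+1})+N_{\XCal}(\x_{t+1})$ with an $O(\sqrt{\ell\delta})$ residual, nor the $O(\sqrt{\delta/\mu_\y})$ proximity of the inner iterate $\tilde{\y}_{t+1}$ to $\y^\star(\x_{t+1})$: both require controlling the dual gap $\max_{\y\in\YCal}\Psi_{g_t}(\y)-\Psi_{g_t}(\tilde{\y}_{t+1})$ (with $\Psi_{g_t}(\cdot)=\min_{\x\in\XCal}g_t(\x,\cdot)$), which is internal to the proof of that theorem and not part of its statement. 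The black-box route — the one the paper takes — is to compare $\x_{t+1}$ with the exact prox point $\x_t^\star$ (strong convexity gives $\|\x_{t+1}-\x_t^\star\|\le\sqrt{2\delta/\ell}$) and use the exact optimality of $\x_t^\star$ together with the $2\kappay\ell$-smoothness of $\Phi$, which unavoidably puts a factor $\kappay$ in front of $\|\x_{t+1}-\x_t^\star\|$.

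Because of that factor, your tolerance $\delta=\Theta(\epsilon^2/\ell)$ is too large: the averaged bound then carries an error term $\Theta(\kappay^2\ell\delta)=\Theta(\kappay^2\epsilon^2)$, yielding only $O(\kappay\epsilon)$-stationarity; even on your own route the swap from $\gradx f(\x_{t+1},\tilde{\y}_{t+1})$ to $\grad\Phi(\x_{t+1})$ costs $\ell\sqrt{2\delta/\mu_\y}=\sqrt{2\kappay\ell\delta}=\Theta(\sqrt{\kappay}\,\epsilon)$. One needs $\delta\lesssim\epsilon^2/(\kappay^2\ell)$, and the paper indeed runs with $\delta\le\epsilon^2/((10\kappay)^4\ell)$. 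The fix costs nothing in the final bound, since $\delta$ enters the \textsc{\AGPROX} cost only inside the logarithm, so the $\tilde O(\sqrt{\kappay})$ inner cost and the total $O(\ell\Delta_\Phi\epsilon^{-2}\sqrt{\kappay}\log^2(\cdot))$ complexity survive; but as written, your parameter choice and residual claim do not deliver an $\epsilon$-stationary point. The remaining pieces (Markov over the random index, the terminal AGD giving $\ell\|\hat{\y}^+-\hat{\y}\|\le\epsilon$, and the swap from $\grad\Phi(\hat{\x})$ to $\gradx f(\hat{\x},\hat{\y}^+)$ at cost $\ell\|\hat{\y}^+-\y^\star(\hat{\x})\|$) match the paper and are fine.
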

Theorem \ref{thm:nsc} claims that Algorithm \ref{alg:Minimax_PG} will find an $\epsilon$-stationary point, with at least constant probability, in $\tilde{O}(\sqrt{\kappa_\y}/\epsilon^2)$ gradient evaluations. Similar to Theorem \ref{thm:scsc}, the inner loop takes $\tilde{O}(\sqrt{\kappa_\y})$ gradient evaluations to solve the proximal step since the condition number of $g_t(\cdot, \y)$ is $O(1)$ for any $\y \in \YCal$. In the outer loop, regardless of the smoothness of $\Phi(\cdot)$, PPA with proximal parameter $\ell$ is capable of finding the stationary point in $\tilde{O}(1/\epsilon^2)$ iterations. In total, the gradient complexity is $\tilde{O}(\sqrt{\kappa_\y}/\epsilon^2)$.
\begin{algorithm}[t]
\caption{$\textsc{\AGNC}(g, \x_0, \y_0, \ell, \mu_\y, \epsilon, T)$}\label{alg:Minimax_PG}
\begin{algorithmic}[1]
\STATE \textbf{Input:} initial point $\x_0, \y_0$, proximity $\ell$, strongly-convex parameter $\mu$, tolerance $\delta$, iteration $T$. 
\STATE \textbf{Initialize:} set $\delta \leftarrow \frac{\epsilon^2}{(10\kappay)^4\ell} \cdot (\frac{\epsilon}{\ell D_\y})^2$. 
\FOR{$t = 1, \cdots, T$}
\STATE denote $g_t(\cdot, \cdot)$ where $g_t(\x, \y) := f(\x, \y) + \ell \norm{\x - \x_{t-1}}^2$.
\STATE $\x_t \leftarrow \textsc{\AGPROX}(g_t, \x_0, \y_0, 3\ell, \ell, \mu, \delta)$. 
\ENDFOR
\STATE sample $s$ uniformly from $\{1, 2, \cdots, T\}$. 
\STATE $\y_{s} \leftarrow \textsc{AGD}(-f(\x_s, \cdot), \y_0, \ell, \mu, \delta)$.
\STATE \textbf{Output:} $(\x_s, \y_s)$. 
\end{algorithmic}
\end{algorithm}
\subsection{Nonconvex-concave setting}
Our result in the nonconvex-strongly-concave setting readily implies a fast result in the nonconvex-concave setting. Consider the following auxiliary function for an arbitrary $\y_0 \in \YCal$:
\begin{equation}\label{eq:reduction3}
\tilde{f}_{\epsilon}(\x, \y) = f(\x, \y) - (\epsilon/4D_\y) \norm{\y - \y_0}^2.
\end{equation}
By construction, it is clear that the gradient of $f$ and $\tilde{f}_{\epsilon}$ are close in the sense 
$$\max_{(\x, \y) \in \mathbb{R}^m \times \YCal} \norm{\grad f(\x, \y) - \grad \tilde{f}_{\epsilon} (\x, \y)} \le \epsilon/4.$$ 
This implies that any $(\epsilon/2)$-stationary point of $\tilde{f}_{\epsilon}$ is also a $\epsilon$-stationary point of $f$, and thus it is sufficient to solve the problem $\min_{\x\in \XCal}\max_{\x\in \YCal} \tilde{f}_{\epsilon}(\x, \y)$. Finally, the function $\tilde{f}_{\epsilon}(\x, \cdot)$ is always $\epsilon/(2D_\y)$-strongly-concave, which can be fed into Algorithm \ref{alg:Minimax_PG} to obtain the following result.
\begin{corollary}\label{cor:nc}
Assume that $f$ is $\ell$-smooth and $f(\x, \cdot)$ is concave for all $\x$.  Then there exists $T>0$ such that the output $(\hat{\x}, \hat{\y}) = \textsc{\AGNC}(\tilde{f}_{\epsilon}, \x_0, \y_0, \ell, \epsilon/(2D_{\y}), \epsilon/2, T)$ is an $\epsilon$-stationary point of $f$ with probability at least $2/3$, and the total number of gradient evaluations is bounded by
\begin{equation*}
O\left(\frac{\ell\Delta_\Phi}{\epsilon^{2}} \cdot \sqrt{\frac{\ell \Dy}{\epsilon}}\log^2 \left(\frac{\ell(\tilde{D}_\x^2 + \Dy^2)}{\epsilon}\right)\right),
\end{equation*}
where $\Dy > 0$, $\Delta_\Phi = \Phi(\x_0) - \min_{\x \in \br^m} \Phi(\x)$ is the initial function value gap and $\tilde{D}_\x = \|\x_0 - \x_{g_1}^\star(\y_0)\|$ is the initial distance where $\x_g^\star(\y_0) = \argmin_{\x \in \XCal} g(\x, \y_0)$. 
\end{corollary}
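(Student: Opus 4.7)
The plan is to reduce the nonconvex-concave problem to a nonconvex-strongly-concave problem via the regularization in \eqref{eq:reduction3}, and then invoke Theorem~\ref{thm:nsc}. Let $r(\y) := (\epsilon/(4D_\y))\|\y - \y_0\|^2$, so that $\tilde{f}_\epsilon(\x, \y) = f(\x, \y) - r(\y)$. First I would verify the three structural properties of $\tilde{f}_\epsilon$ on $\mathbb{R}^m \times \YCal$: (i) $\tilde{f}_\epsilon(\x, \cdot)$ is $\mu := \epsilon/(2D_\y)$-strongly concave, since $-r$ contributes exactly that much strong concavity; (ii) $\tilde{f}_\epsilon$ is $\tilde{\ell}$-smooth with $\tilde{\ell} = \ell + \epsilon/(2D_\y) \le 2\ell$ for $\epsilon$ sufficiently small (absorbing a constant factor into the big-$O$); and (iii) for every $(\x, \y) \in \mathbb{R}^m \times \YCal$,
\begin{equation*}
\|\grad f(\x, \y) - \grad \tilde{f}_\epsilon(\x, \y)\| = \|\grad r(\y)\| = \tfrac{\epsilon}{2D_\y}\|\y - \y_0\| \leq \epsilon/2,
\end{equation*}
using the bounded-diameter hypothesis $\|\y - \y_0\| \leq D_\y$.

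Next I would convert an approximate stationary point of $\tilde{f}_\epsilon$ into one of $f$. Suppose $(\hat{\x}, \hat{\y})$ is an $(\epsilon/2)$-stationary point of $\tilde{f}_\epsilon$ in the sense of Definition~\ref{Def:eps_stationary} (with smoothness $\tilde{\ell}$). Let
\begin{equation*}
\hat{\y}^+ = \PY\!\left[\hat{\y} + \tfrac{1}{\tilde{\ell}}\grady \tilde{f}_\epsilon(\hat{\x}, \hat{\y})\right], \qquad \hat{\y}^{++} = \PY\!\left[\hat{\y} + \tfrac{1}{\tilde{\ell}}\grady f(\hat{\x}, \hat{\y})\right].
\end{equation*}
Using non-expansiveness of $\PY$ together with the gradient closeness from (iii), $\|\hat{\y}^+ - \hat{\y}^{++}\| \leq (1/\tilde{\ell}) \cdot \epsilon/2$, and an identical triangle-inequality argument for the $\x$-component gradient mapping (evaluated at $(\hat{\x}, \hat{\y}^+)$ versus $(\hat{\x}, \hat{\y}^{++})$) lets me absorb the mismatch via $\tilde{\ell}$-smoothness of $f$ and $\tilde{f}_\epsilon$. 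After simplification, the hypothesis that the $\tilde{f}_\epsilon$-gradient-mapping norm is at most $\epsilon/2$ implies the $f$-gradient-mapping norm is at most $\epsilon$, so $(\hat{\x}, \hat{\y})$ (with $\hat{\y}$ replaced by the $f$-inner step where needed) is $\epsilon$-stationary for $f$.

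Having established the reduction, I would invoke Theorem~\ref{thm:nsc} on $\tilde{f}_\epsilon$ with smoothness parameter of order $\ell$, strong-concavity module $\mu_\y = \epsilon/(2D_\y)$, and tolerance $\epsilon/2$. The resulting condition number is $\kappa_\y = \Theta(\ell D_\y / \epsilon)$, and Theorem~\ref{thm:nsc} gives gradient complexity
\begin{equation*}
O\!\left(\frac{\ell \Delta_\Phi}{\epsilon^2}\sqrt{\kappa_\y}\log^2\!\left(\frac{\kappa_\y \ell(\tilde{D}_\x^2 + D_\y^2)}{\epsilon}\right)\right) \;=\; O\!\left(\frac{\ell \Delta_\Phi}{\epsilon^2}\sqrt{\frac{\ell D_\y}{\epsilon}}\log^2\!\left(\frac{\ell(\tilde{D}_\x^2 + D_\y^2)}{\epsilon}\right)\right),
\end{equation*}
matching the claim. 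I also need to verify that $\Delta_\Phi$ for the regularized problem is at most a constant times $\Delta_\Phi$ for the original problem, which follows from $|\tilde{f}_\epsilon - f| \leq \epsilon D_\y/4$ uniformly on $\mathbb{R}^m \times \YCal$, affecting at most lower-order terms.

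The main obstacle is step two: carefully handling the fact that Definition~\ref{Def:eps_stationary} is stated in terms of the smoothness constant used in the gradient mapping, so the reduction requires showing that the natural stationarity measure is stable under both (a) small additive gradient perturbations and (b) a small change in the smoothness constant ($\ell \to \tilde{\ell} \leq 2\ell$). Both stability claims are standard consequences of non-expansiveness of $\PY$ and $\PX$ plus the fact that the gradient-mapping norm is Lipschitz in its defining step-size, but they need to be tracked to ensure the constants in the final $\epsilon$-stationarity bound work out.
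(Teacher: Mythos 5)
Your proposal follows essentially the same route as the paper's proof: introduce $\tilde f_\epsilon$, bound the gradient perturbation $\|\grad f-\grad\tilde f_\epsilon\|\le\epsilon/2$, transfer $(\epsilon/2)$-stationarity of $\tilde f_\epsilon$ to $\epsilon$-stationarity of $f$ via nonexpansiveness of the projections, and then invoke Theorem~\ref{thm:nsc} with $\kappa_\y=\Theta(\ell D_\y/\epsilon)$. The one point worth flagging is your self-identified ``main obstacle'' (b): in the paper there is no change of smoothness constant to track, because the algorithm is invoked with parameter $\ell$ (not $\tilde\ell$), so the gradient mapping in Definition~\ref{Def:eps_stationary} uses step size $1/\ell$ on both sides of the reduction; the only thing one must propagate is the additive gradient perturbation, and the paper's argument is streamlined by the observation that $\gradx\tilde f_\epsilon=\gradx f$ exactly, so the entire discrepancy lives in the $\y$-gradient and is controlled directly by $\ell$-smoothness of $f$ together with nonexpansiveness of $\PY$.
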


\section{Conclusions}\label{sec:conclusion}
This paper has provided the first set of \textit{near-optimal} algorithms for strongly-convex-(strongly)-concave minimax optimization problems and the state-of-the-art algorithms for nonconvex-(strongly)-concave minimax optimization problems. For the former class of problems, our algorithms match the lower complexity bound for first-order algorithms~\citep{Ouyang-2019-Lower, Ibrahim-2019-Lower, Zhang-2019-Lower} up to logarithmic factors. For the latter class of problems, our algorithms achieve the best known upper bound. In the future research, one important direction is to investigate the lower complexity bound of first-order algorithms for nonconvex-(strongly)-concave minimax problems. Despite several striking results on lower complexity bounds for nonconvex smooth problems~\citep{Carmon-2019-LowerI, Carmon-2019-LowerII}, this problem remains challenging as solving it requires a new construction of ``chain-style" functions and resisting oracles.

\section*{Acknowledgments}
We would like to thank three anonymous referees for constructive suggestions that improve the quality of this paper. This work was supported in part by the Mathematical Data Science program of the Office of Naval Research under grant number N00014-18-1-2764.
\bibliographystyle{plainnat}
\bibliography{ref}

\appendix

\section{Additional Results for Nonconvex-Concave Optimization}\label{sec:app-nonconvex}
In this section, we present our results for nonconvex-concave optimization using stationary of $\Phi(\cdot) := \max_{\y\in \YCal}f(\cdot, \y)$ (Definition \ref{Def:NSC-Stationary} and Definition \ref{Def:NC-Stationary}) as the optimality measure.

\subsection{Optimality notion based on Moreau envelope}\label{sec:envelope}
We present another optimality notion based on Moreau envelope for nonconvex-concave setting in which $f(\cdot, \y)$ is not necessarily convex for each $\y \in \YCal$ but $f(\x, \cdot)$ is concave for each $\x \in \XCal$. For simplicity, we let $\XCal = \br^m$ and $\YCal$ be convex and bounded. In general, finding a global saddle point of $f$ is intractable since solving the special case with a singleton $\YCal$ globally is already NP-hard~\citep{Murty-1987-Some} as mentioned in the main text. 

One approach, inspired by nonconvex optimization, is to equivalently reformulate problem~\eqref{prob:minimax} as the following nonconvex minimization problem:
\begin{equation}\label{prob:alternative}
\min_{\x \in \br^m} \ \left\{\Phi(\x) := \max_{\y \in \YCal} f(\x, \y) \right\},    
\end{equation}
and define an optimality notion for the local surrogate of global optimum of $\Phi$. In robust learning, $\x$ is the classifier while $\y$ is the adversarial noise. Practitioners are often only interested in finding a robust classifier $\x$ instead of an adversarial response $\y$ to each data point. Such a stationary point $\x$ precisely corresponds to a robust classifier that is stationary to the robust classification error.

If $f(\x, \cdot)$ is further assumed to be strongly concave for each $\x \in \br^m$, then $\Phi$ is smooth and a standard optimality notion is the stationary point. 
\begin{definition}\label{Def:NSC-Stationary}
We call $\hat{\x}$ an $\epsilon$-stationary point of a \textsf{smooth} function $\Phi$ if $\left\|\grad\Phi(\hat{\x})\right\| \leq \epsilon$. If $\epsilon = 0$, then $\hat{\x}$ is called a stationary point. 
\end{definition}
In contrast, when $f(\x, \cdot)$ is merely concave for each $\x \in \XCal$, $\Phi$ is not necessarily smooth and even not differentiable. A weaker sufficient condition for the purpose of our paper is the weak convexity.
\begin{definition}\label{Def:NC-Weak-Convex}
A function $\Phi: \br^d \rightarrow \br$ is $L$-weakly convex if $\Phi(\cdot) + (L/2)\left\|\cdot\right\|^2$ is convex.
\end{definition}
First, a function $\Phi$ is $\ell$-weakly convex if it is $\ell$-smooth. Second, the subdifferential of a $\ell$-weakly convex function $\Phi$ can be uniquely determined by the subdifferential of $\Phi(\cdot) + (\ell/2)\|\cdot\|^2$. This implies that the optimality notion can be defined by a point $\x \in \br^m$ with at least one small subgradient: $\min_{\xi \in \partial \Phi(\x)} \|\xi\| \leq \epsilon$. Unfortunately, this notion can be restrictive if $\Phi$ is nonsmooth. Considering a one-dimensional function $\Phi(\cdot) = |\cdot|$, a point $\x$ must be $0$ if it satisfies the optimality notion with $\epsilon \in [0, 1)$. This means that finding a sufficiently accurate solution under such optimality notion is as difficult as solving the minimization exactly. Another popular optimality notion is based on the \textit{Moreau envelope} of $\Phi$ when $\Phi$ is weakly convex~\citep{Davis-2019-Stochastic}.
\begin{definition}
A function $\Phi_\lambda$ is the Moreau envelope of $\Phi$ with $\lambda > 0$ if for $\forall \x \in \br^m$, that 
\begin{equation*}
\Phi_\lambda(\x) \ = \ \min_{\w \in \br^m} \Phi(\w) + (1/2\lambda)\|\w - \x\|^2. 
\end{equation*}
\end{definition}
\begin{lemma}[Properties of Moreau envelopes]\label{Lemma:NC-Moreau-Envelope}
If the function $\Phi(\cdot)$ is $\ell$-weakly convex, its Moreau envelope $\Phi_{1/2\ell}(\cdot)$ is $4\ell$-smooth with the gradient $\grad\Phi_{1/2\ell}(\cdot) = 2\ell(\cdot - \prox_{\Phi/2\ell}(\cdot))$ in which a point $\prox_{\Phi/2\ell}(\cdot) = \argmin_{\w \in \br^m} \{\Phi(\w) + \ell\|\w - \cdot\|^2\}$ is defined.  
\end{lemma}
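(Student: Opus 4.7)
The plan is to reduce the statement to classical Moreau-envelope theory for convex functions via a simple change of variables. Define the auxiliary function $g := \Phi + (\ell/2)\|\cdot\|^2$, which is convex by the definition of $\ell$-weak convexity. Completing the square in $\w$ in the minimization that defines $\Phi_{1/2\ell}$ yields the identity
\[
\Phi_{1/2\ell}(\x) \;=\; \min_{\w \in \br^m}\bigl[\,g(\w) + (\ell/2)\|\w - 2\x\|^2\,\bigr] - \ell\|\x\|^2 \;=\; g_{1/\ell}(2\x) - \ell\|\x\|^2,
\]
and simultaneously identifies $\prox_{\Phi/2\ell}(\x) = \prox_{g/\ell}(2\x)$. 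The proximal point exists and is unique because the inner objective is $\ell$-strongly convex in $\w$.

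Next I would invoke the standard properties of the Moreau envelope of a proper lower-semicontinuous convex function: $g_{1/\ell}$ is convex and continuously differentiable with $\grad g_{1/\ell}(\y) = \ell(\y - \prox_{g/\ell}(\y))$, and its gradient is $\ell$-Lipschitz. Applying these facts to the identity above and differentiating by the chain rule gives
\[
\grad \Phi_{1/2\ell}(\x) \;=\; 2\grad g_{1/\ell}(2\x) - 2\ell\x \;=\; 2\ell\bigl(2\x - \prox_{\Phi/2\ell}(\x)\bigr) - 2\ell\x \;=\; 2\ell\bigl(\x - \prox_{\Phi/2\ell}(\x)\bigr),
\]
which matches the gradient formula in the lemma.

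For the $4\ell$-smoothness claim I would argue directly through the proximal mapping. Subtracting the optimality conditions $2\ell(\x_i - \prox_{\Phi/2\ell}(\x_i)) \in \partial\Phi(\prox_{\Phi/2\ell}(\x_i))$ at $\x_1$ and $\x_2$, adding $\ell\,\prox_{\Phi/2\ell}(\x_i)$ to both sides so that the left-hand side lands in $\partial g$, and pairing with $\prox_{\Phi/2\ell}(\x_1) - \prox_{\Phi/2\ell}(\x_2)$ via monotonicity of $\partial g$ yields
\[
\|p\|^2 \;\le\; 2\langle d, p\rangle, \qquad d := \x_1 - \x_2,\ p := \prox_{\Phi/2\ell}(\x_1) - \prox_{\Phi/2\ell}(\x_2).
\]
Plugging this into the algebraic identity $\|d - p\|^2 = \|d\|^2 - 2\langle d,p\rangle + \|p\|^2$ gives $\|d - p\| \le \|d\|$, and since $\grad\Phi_{1/2\ell}(\x_1) - \grad\Phi_{1/2\ell}(\x_2) = 2\ell(d - p)$, the Lipschitz constant of $\grad\Phi_{1/2\ell}$ is at most $2\ell$, well within the claimed $4\ell$. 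I anticipate no substantive obstacle; the only point requiring care is the subdifferential calculus, which is standard for weakly convex $\Phi$ because $\partial\Phi$ is obtained from the convex subdifferential $\partial g$ by shifting with $-\ell\,\mathrm{Id}$.
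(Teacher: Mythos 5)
Your proof is correct and takes a genuinely different route from the paper. The paper establishes the gradient formula by citing~\citet[Lemma~2.2]{Davis-2019-Stochastic} directly, and then obtains the $4\ell$-smoothness by invoking a Lipschitz bound on $\prox_{\Phi/2\ell}$. You instead reduce to the classical convex Moreau envelope by the change of variables $g := \Phi + (\ell/2)\|\cdot\|^2$, verifying the algebraic identity $\Phi_{1/2\ell}(\x) = g_{1/\ell}(2\x) - \ell\|\x\|^2$ and $\prox_{\Phi/2\ell}(\x) = \prox_{g/\ell}(2\x)$, and then transport the standard convex facts through the chain rule; your computations check out. For the smoothness you do not pass through a Lipschitz estimate on the prox at all, but instead combine the firm-nonexpansiveness-style inequality $\|p\|^2 \le 2\langle d,p\rangle$ (obtained from monotonicity of $\partial g$) with the algebraic identity for $\|d-p\|^2$, yielding the sharper Lipschitz constant $2\ell$ for $\grad\Phi_{1/2\ell}$. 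This is cleaner and strictly stronger than the paper's $4\ell$; it also sidesteps a minor imprecision in the paper's argument, which asserts that $\prox_{\Phi/2\ell}$ is $1$-Lipschitz. In fact, for an $\ell$-weakly convex $\Phi$ this prox is only $2$-Lipschitz in general (e.g.\ $\Phi(\w) = -(\ell/2)\|\w\|^2$ gives $\prox_{\Phi/2\ell}(\x) = 2\x$); your monotonicity argument, which uses the full inequality rather than Cauchy--Schwarz, is exactly what recovers the correct conclusion.
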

Thus, an $\epsilon$-stationary point of an $\ell$-weakly convex function $\Phi$ can be alternatively defined as a point $\hat{\x}$ satisfying that the gradient norm of Moreau envelope $\|\nabla \Phi_{1/2\ell}(\hat{\x})\|$ is small. 
\begin{definition}\label{Def:NC-Stationary}
We call $\hat{\x}$ an $\epsilon$-stationary point of a $\ell$-\textsf{weakly convex} function $\Phi$ if $\left\|\grad\Phi_{1/2\ell}(\hat{\x})\right\| \leq \epsilon$. If $\epsilon = 0$, then $\hat{\x}$ is called a stationary point.  
\end{definition}
\begin{lemma}[Properties of $\epsilon$-stationary point]\label{Lemma:NC-Stationary}
If $\hat{\x}$ is an $\epsilon$-stationary point of a $\ell$-weakly convex function $\Phi$, then there exists $\bar{\x} \in \br^m$ such that $\min_{\xi \in \partial \Phi(\bar{\x})} \norm{\xi} \leq \epsilon$ and $\norm{\hat{\x} - \bar{\x}} \leq \epsilon/2\ell$.  
\end{lemma}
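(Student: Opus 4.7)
The plan is to produce the witness $\bar{\x}$ directly from the proximal operator that already appears in Lemma~\ref{Lemma:NC-Moreau-Envelope}. Specifically, I would set
\[
\bar{\x} \ := \ \prox_{\Phi/2\ell}(\hat{\x}) \ = \ \argmin_{\w \in \br^m} \left\{ \Phi(\w) + \ell\|\w - \hat{\x}\|^2 \right\},
\]
and then read off both conclusions from this single object. Existence and uniqueness of $\bar{\x}$ are not an issue because the inner objective is $\ell$-strongly convex: $\Phi + (\ell/2)\|\cdot\|^2$ is convex by $\ell$-weak convexity, and the quadratic $\ell\|\w - \hat{\x}\|^2$ contributes an additional $2\ell$ of strong convexity to~$\w$, so the sum is $\ell$-strongly convex in $\w$.

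For the distance bound, I would invoke the gradient formula from Lemma~\ref{Lemma:NC-Moreau-Envelope}: $\grad \Phi_{1/2\ell}(\hat{\x}) = 2\ell(\hat{\x} - \bar{\x})$. Taking norms and using the hypothesis $\|\grad \Phi_{1/2\ell}(\hat{\x})\| \leq \epsilon$ gives $\|\hat{\x} - \bar{\x}\| \leq \epsilon/(2\ell)$ immediately.

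For the subgradient bound, I would write down the first-order optimality condition of the proximal subproblem at its minimizer $\bar{\x}$. Since $\w \mapsto \ell\|\w-\hat{\x}\|^2$ is smooth and $\Phi$ is $\ell$-weakly convex, the subdifferential of the sum decomposes (using the subdifferential sum rule for the sum of a convex function and a smooth function, applied to $\Phi + (\ell/2)\|\cdot\|^2$ after regrouping), yielding $0 \in \partial\Phi(\bar{\x}) + 2\ell(\bar{\x} - \hat{\x})$. Equivalently, $2\ell(\hat{\x} - \bar{\x}) \in \partial\Phi(\bar{\x})$, so
\[
\min_{\xi \in \partial\Phi(\bar{\x})} \|\xi\| \ \leq \ \|2\ell(\hat{\x} - \bar{\x})\| \ = \ \|\grad\Phi_{1/2\ell}(\hat{\x})\| \ \leq \ \epsilon.
\]

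The only subtle step is the subdifferential calculus for the weakly convex function $\Phi$: one must justify that the first-order condition for the proximal subproblem yields an element of the (regular/limiting) subdifferential of $\Phi$ itself and that this is precisely the object referenced in the statement. I expect this to be the main place where one has to be careful, but for $\ell$-weakly convex $\Phi$ the subdifferential is unambiguously defined through that of $\Phi + (\ell/2)\|\cdot\|^2$, and the shift argument above handles it cleanly. Everything else is a one-line consequence of the gradient formula and the definition of $\bar{\x}$.
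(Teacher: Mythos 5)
Your proposal is correct and takes essentially the same approach as the paper: define $\bar{\x} := \prox_{\Phi/2\ell}(\hat{\x})$, read the distance bound off the gradient formula $\grad\Phi_{1/2\ell}(\hat{\x}) = 2\ell(\hat{\x}-\bar{\x})$, and obtain the subgradient bound from the first-order optimality condition $2\ell(\hat{\x}-\bar{\x}) \in \partial\Phi(\bar{\x})$ of the proximal subproblem. Your write-up is more explicit about the subdifferential calculus than the paper's one-paragraph proof, but the route is identical.
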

Lemma~\ref{Lemma:NC-Stationary} shows that an $\epsilon$-stationary point defined by the Moreau envelope can be interpreted as the relaxation for a point with at least one small subgradient. In particular, if $\hat{\x}$ is an $\epsilon$-stationary point of a $\ell$-weakly convex function $\Phi$, then it is close to a point which has small subgradient.


\subsection{Nonconvex-strongly-concave setting}
In the setting of nonconvex-strongly-concave function, we still use Algorithm \ref{alg:Minimax_PG}. Similar to Theorem \ref{thm:nsc}, we can obtain a guarantee, which finds a point $\hat{\x}$ satisfying $\norm{\grad \Phi(\hat{\x})} \le \epsilon$ in the same number of iterations as in Theorem \ref{thm:nsc}.
\begin{theorem}\label{thm:nsc-Moreau}
Assume that $f$ is $\ell$-smooth and $f(\x, \cdot)$ is $\mu_\y$-strongly-concave for all $\x$. Then there exists $T > 0$ such that the output $(\hat{\x}, \hat{\y}) = \textsc{\AGNC}(f, \x_0, \y_0, \ell, \mu_{\y}, \epsilon, T)$ satisfies $\norm{\grad \Phi(\hat{\x})} \leq \epsilon$ with probability at least $2/3$, and the total number of gradient evaluations is bounded by
\begin{equation*}
O\left(\frac{\ell \Delta_\Phi}{\epsilon^2}\cdot \sqrt{\kappa_\y} \log^2 \left(\frac{\kappa_\y \ell(\tilde{D}_\x^2 + \Dy^2)}{\epsilon}\right)\right)
\end{equation*}
where $\kappa_\y = \ell/\mu_\y$ is the condition number, $\Delta_\Phi = \Phi(\x_0) - \min_{\x \in \br^m} \Phi(\x)$ is the initial function value gap and $\tilde{D}_\x = \|\x_0 - \x_{g_1}^\star(\y_0)\|$ is the initial distance where $\x_g^\star(\y_0) = \argmin_{\x \in \XCal} g(\x, \y_0)$. 
\end{theorem}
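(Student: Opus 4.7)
The plan is to analyze the outer loop of Algorithm \ref{alg:Minimax_PG} as an inexact proximal point method directly on the max-function $\Phi$, rather than going through the stationarity of $f$ as in Theorem \ref{thm:nsc}. The key structural observation is that although $\Phi$ can be as poor as $(1+\kappa_\y)\ell$-smooth (via Danskin's theorem), it inherits an improved $\ell$-weak convexity from the max structure: each $f(\cdot, \y)$ is $\ell$-smooth and hence $\ell$-weakly convex, and the pointwise maximum of functions that are $\ell$-weakly convex with a common modulus is itself $\ell$-weakly convex. Consequently, the proximal objective $P_t(\x) := \Phi(\x) + \ell\|\x - \x_{t-1}\|^2 = \max_{\y \in \YCal} g_t(\x, \y)$ is $\ell$-strongly convex and admits a unique minimizer $\hat{\x}_t$, while Danskin's theorem still ensures that $\nabla \Phi$ is well-defined.

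For the outer descent, I would plug $\x = \x_{t-1}$ into $P_t$ to get $P_t(\hat{\x}_t) \leq \Phi(\x_{t-1})$, and combine this with the $\delta$-accuracy of \textsc{\AGPROX} guaranteed by Theorem \ref{Theorem:maximin-AG2}, namely $P_t(\x_t) = \max_{\y} g_t(\x_t, \y) \leq P_t(\hat{\x}_t) + \delta$, to obtain
\begin{equation*}
\Phi(\x_t) + \ell\|\x_t - \x_{t-1}\|^2 \;\leq\; \Phi(\x_{t-1}) + \delta.
\end{equation*}
Summing from $t = 1$ to $T$ and using $\Phi(\x_T) \geq \min_\x \Phi(\x)$ yields $\ell \sum_{t=1}^T \|\x_t - \x_{t-1}\|^2 \leq \Delta_\Phi + T\delta$.

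Next, I would translate iterate movement into a bound on $\|\nabla \Phi(\x_t)\|$. Since $\hat{\x}_t$ is an unconstrained minimizer of the smooth function $P_t$, first-order optimality gives $\nabla \Phi(\hat{\x}_t) + 2\ell(\hat{\x}_t - \x_{t-1}) = 0$, so $\|\nabla \Phi(\hat{\x}_t)\| = 2\ell\|\hat{\x}_t - \x_{t-1}\|$. The $\ell$-strong convexity of $P_t$ converts the $\delta$ function-value gap into the iterate bound $\|\x_t - \hat{\x}_t\| \leq \sqrt{2\delta/\ell}$, and the $(1+\kappa_\y)\ell$-smoothness of $\Phi$ gives $\|\nabla \Phi(\x_t) - \nabla \Phi(\hat{\x}_t)\| \leq (1+\kappa_\y)\ell\sqrt{2\delta/\ell}$. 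Combining with $\|\hat{\x}_t - \x_{t-1}\| \leq \|\x_t - \x_{t-1}\| + \|\x_t - \hat{\x}_t\|$ yields
\begin{equation*}
\|\nabla \Phi(\x_t)\| \;\leq\; 2\ell\|\x_t - \x_{t-1}\| + O\bigl(\kappa_\y \sqrt{\ell \delta}\bigr).
\end{equation*}

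Since $s$ is drawn uniformly from $\{1,\ldots,T\}$, Markov's inequality gives $\|\x_s - \x_{s-1}\|^2 \leq 3(\Delta_\Phi + T\delta)/(T\ell)$ with probability at least $2/3$. Choosing $T = \Theta(\ell \Delta_\Phi / \epsilon^2)$ and $\delta$ polynomially small in $\epsilon/(\kappa_\y \ell \Dy)$, as already prescribed by the algorithm, drives both terms below $\epsilon/2$, so $\|\nabla \Phi(\x_s)\| \leq \epsilon$ with probability at least $2/3$. Each inner call to \textsc{\AGPROX} costs $\tilde O(\sqrt{\kappa_\y})$ gradient evaluations by Theorem \ref{Theorem:maximin-AG2}, because $g_t$ has $\x$-condition number $3\ell/\ell = O(1)$ and $\y$-condition number $3\ell/\mu_\y = \Theta(\kappa_\y)$, so the total is $\tilde O(\sqrt{\kappa_\y}\, \ell \Delta_\Phi / \epsilon^2)$. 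The main obstacle is the very first step: one must extract the $\ell$-weak convexity of $\Phi$ from the \emph{max} structure of $f$, not from the smoothness of $\Phi$ itself (which would only yield weak convexity constant $(1+\kappa_\y)\ell$ and leave $P_t$ possibly non-convex). Without this observation, the iterate closeness $\|\x_t - \hat{\x}_t\| = O(\sqrt{\delta/\ell})$ that drives the gradient bound at the inexact iterate would not be available, and the $\kappa_\y$ dependence in the final rate would degrade substantially.
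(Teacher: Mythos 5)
Your proposal is correct and follows essentially the same route as the paper: the same inexact proximal descent inequality $\Phi(\x_t) + \ell\|\x_t - \x_{t-1}\|^2 \leq \Phi(\x_{t-1}) + \delta$, the same conversion of the $\delta$ function-value gap into iterate closeness $\|\x_t - \x_t^*\| \leq \sqrt{2\delta/\ell}$ via $\ell$-strong convexity of the proximal objective, the same use of first-order optimality $\nabla\Phi(\x_t^*) = -2\ell(\x_t^* - \x_{t-1})$ together with $O(\kappa_\y\ell)$-smoothness of $\Phi$ to bound $\|\nabla\Phi(\x_t)\|$, and Markov's inequality after summing. Your closing observation, that the $\ell$-weak convexity of $\Phi$ must be read off the max structure of $f$ (each $f(\cdot,\y) + (\ell/2)\|\cdot\|^2$ is convex, so their supremum is convex) rather than from the $O(\kappa_\y\ell)$-smoothness of $\Phi$ itself, correctly supplies the justification the paper leaves implicit when it asserts that $\Phi(\cdot) + \ell\|\cdot - \x_t\|^2$ is $\ell$-strongly convex.
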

\subsection{Nonconvex-concave setting}
We can similarly reduce the problem of optimizing a nonconvex-concave function to the problem of optimizing a nonconvex-strongly-concave function. The only caveat is that, in order to achieve the near-optimal point using Definition \ref{Def:NC-Stationary} as optimality measure, we can only add a $O(\epsilon^2)$ term as follows:
\begin{equation}\label{eq:reduction4}
\bar{f}_{\epsilon}(\x, \y) = f(\x, \y) - (\epsilon^2/200\ell\Dy^2)\norm{\y - \y_0}^2. 
\end{equation}
Now $\bar{f}_{\epsilon}(\x, \cdot)$ is only $\epsilon^2/(100\ell D^2_\y)$-concave, by feeding it to Algorithm \ref{alg:Minimax_PG} and through a slightly more complicated reduction argument, we can only obtain gradient complexity bound of $\tilde{O}(\epsilon^{-3})$ instead of $\tilde{O}(\epsilon^{-2.5})$ as in Corollary \ref{cor:nc}. Formally, we have
\begin{corollary}\label{cor:nc-Moreau}
Assume that $f$ is $\ell$-smooth, and $f(\x, \cdot)$ is concave for all $\x$. Then there exists $T>0$ such that the output $(\hat{\x}, \hat{\y}) = \textsc{\AGNC}(\bar{f}_{\epsilon}, \x_0, \y_0, \ell, \epsilon^2/(100\ell D^2_{\y}), \epsilon/10, T)$ satisfies $\norm{\grad \Phi_{1/2\ell}(\hat{\x})} \leq \epsilon$ with probability at least $2/3$, and the total number of gradient evaluations is bounded by
\begin{equation*}
\cO\left(\frac{\ell^2\Dy\Delta_\Phi}{\epsilon^3} \log^2 \left(\frac{\ell(\tilde{D}_\x^2 + \Dy^2)}{\epsilon}\right)\right)
\end{equation*}
where $\Dy > 0$, $\Delta_\Phi = \Phi(\x_0) - \min_{\x \in \br^m} \Phi(\x)$ is the initial function value gap and $\tilde{D}_\x = \|\x_0 - \x_{g_1}^\star(\y_0)\|$ is the initial distance where $\x_g^\star(\y_0) = \argmin_{\x \in \XCal} g(\x, \y_0)$. 
\end{corollary}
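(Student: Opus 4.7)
The plan is to run Algorithm~\ref{alg:Minimax_PG} on the auxiliary function $\bar{f}_{\epsilon}$ from~\eqref{eq:reduction4}, invoke Theorem~\ref{thm:nsc-Moreau} to obtain a point $\hat{\x}$ at which $\grad \bar{\Phi}(\hat{\x})$ is small (where $\bar{\Phi}(\x) := \max_{\y \in \YCal} \bar{f}_{\epsilon}(\x, \y)$), and then translate this into smallness of $\grad \Phi_{1/2\ell}(\hat{\x})$. First I would check the regularity of $\bar{f}_{\epsilon}$: the Hessian of the added quadratic in $\y$ has magnitude $\epsilon^2/(100\ell \Dy^2)$, which is dominated by $\ell$ in the relevant regime $\epsilon \leq 10\ell \Dy$, so $\bar{f}_{\epsilon}$ remains $O(\ell)$-smooth, while $\bar{f}_{\epsilon}(\x, \cdot)$ is $\mu_\y$-strongly concave with $\mu_\y = \epsilon^2/(100\ell \Dy^2)$. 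Applying Theorem~\ref{thm:nsc-Moreau} with tolerance $\epsilon/10$ then yields, with probability at least $2/3$, a point $\hat{\x}$ satisfying $\|\grad \bar{\Phi}(\hat{\x})\| \leq \epsilon/10$.

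\textbf{Closeness of $\Phi$ and $\bar{\Phi}$.} Because $\|\y - \y_0\|^2 \leq \Dy^2$ on $\YCal$, comparing the two maxima yields the pointwise sandwich $0 \leq \Phi(\x) - \bar{\Phi}(\x) \leq \epsilon^2/(200\ell)$ for every $\x \in \br^m$. Since $f(\cdot, \y)$ is $\ell$-smooth and a pointwise maximum preserves $\ell$-weak convexity, both $\Phi$ and $\bar{\Phi}$ are $\ell$-weakly convex. Hence the two proximal subproblems
\begin{equation*}
h_\Phi(\w) := \Phi(\w) + \ell\|\w - \hat{\x}\|^2, \qquad h_{\bar{\Phi}}(\w) := \bar{\Phi}(\w) + \ell\|\w - \hat{\x}\|^2
\end{equation*}
are $\ell$-strongly convex. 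Denote their minimizers by $\bar{\x} := \prox_{\Phi/2\ell}(\hat{\x})$ and $\tilde{\x} := \prox_{\bar{\Phi}/2\ell}(\hat{\x})$, so that the target quantity, by Lemma~\ref{Lemma:NC-Moreau-Envelope}, is $\|\grad \Phi_{1/2\ell}(\hat{\x})\| = 2\ell\|\hat{\x} - \bar{\x}\|$.

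\textbf{Two-step distance bound.} The crux is to chain two estimates, and this is the step I expect to be the main obstacle since $\Phi$ itself is nonsmooth and the gradient produced by the algorithm lives on the surrogate $\bar{\Phi}$. Step one: the $\ell$-strong convexity of $h_{\bar{\Phi}}$ combined with $\grad h_{\bar{\Phi}}(\hat{\x}) = \grad \bar{\Phi}(\hat{\x})$ gives $\|\hat{\x} - \tilde{\x}\| \leq \|\grad \bar{\Phi}(\hat{\x})\|/\ell \leq \epsilon/(10\ell)$. Step two: start from $h_\Phi(\tilde{\x}) \geq h_\Phi(\bar{\x}) + (\ell/2)\|\tilde{\x} - \bar{\x}\|^2$ and apply the chain $h_\Phi(\tilde{\x}) \leq h_{\bar{\Phi}}(\tilde{\x}) + \epsilon^2/(200\ell) \leq h_{\bar{\Phi}}(\bar{\x}) + \epsilon^2/(200\ell) \leq h_\Phi(\bar{\x}) + \epsilon^2/(200\ell)$, where the middle inequality uses optimality of $\tilde{\x}$ for $h_{\bar{\Phi}}$ and the outer ones use the pointwise sandwich. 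This forces $\|\tilde{\x} - \bar{\x}\| \leq \epsilon/(10\ell)$. The triangle inequality then gives $\|\hat{\x} - \bar{\x}\| \leq \epsilon/(5\ell)$, hence $\|\grad \Phi_{1/2\ell}(\hat{\x})\| \leq 2\epsilon/5 \leq \epsilon$, as required.

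\textbf{Complexity count.} It only remains to instantiate Theorem~\ref{thm:nsc-Moreau} with smoothness $\Theta(\ell)$, strong concavity $\mu_\y = \epsilon^2/(100\ell \Dy^2)$, and tolerance $\epsilon/10$. The resulting inner condition number is $\bar{\kappa}_\y = \Theta(\ell^2 \Dy^2/\epsilon^2)$, so $\sqrt{\bar{\kappa}_\y} = \Theta(\ell \Dy/\epsilon)$, and the potential gap satisfies $\Delta_{\bar{\Phi}} \leq \Delta_\Phi + \epsilon^2/(200\ell) = O(\Delta_\Phi)$. Substituting these into the bound $\tilde{O}(\ell \Delta_{\bar{\Phi}}\sqrt{\bar{\kappa}_\y}/\epsilon^2)$ from Theorem~\ref{thm:nsc-Moreau} produces the claimed $\tilde{O}(\ell^2 \Dy \Delta_\Phi/\epsilon^3)$, with the logarithmic factor collapsing to $\log^2(\ell(\tilde{D}_\x^2 + \Dy^2)/\epsilon)$ once polynomial dependencies in $\ell, \Dy, 1/\epsilon$ are absorbed into the logarithm.
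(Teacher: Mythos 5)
Your proposal is correct, and it follows a genuinely different path from the paper once the output of Theorem~\ref{thm:nsc-Moreau} on $\bar{f}_\epsilon$ is in hand. The paper translates $\|\grad\bar{\Phi}(\hat{\x})\|\le\epsilon/10$ into $\|\grad\Phi_{1/2\ell}(\hat{\x})\|\le\epsilon$ by descending back to the level of $f$: it introduces the points $\y_\epsilon^\star(\hat{\x})$ and $\y_\epsilon^+$, tracks explicit bounds on $\|\gradx f(\hat{\x},\y_\epsilon^+)\|$ and $\|\y_\epsilon^+-\y_\epsilon^\star(\hat{\x})\|$, and then upper-bounds $\Phi(\hat{\x})-\Phi(\x^\star(\hat{\x}))-\ell\|\x^\star(\hat{\x})-\hat{\x}\|^2$ via a smoothness argument on $f(\cdot,\y_\epsilon^+)$ plus Young's inequality before applying the strong-convexity lower bound on the proximal objective. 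You instead work entirely at the level of the two value functions $\Phi$ and $\bar{\Phi}$: a uniform sandwich $0\le\Phi-\bar{\Phi}\le\epsilon^2/(200\ell)$ plus the $\ell$-strong convexity of both proximal subproblems lets you bound $\|\hat{\x}-\prox_{\bar{\Phi}/2\ell}(\hat{\x})\|$ (from the gradient bound on $\bar{\Phi}$) and $\|\prox_{\bar{\Phi}/2\ell}(\hat{\x})-\prox_{\Phi/2\ell}(\hat{\x})\|$ (from the prox-comparison chain) separately, then conclude by triangle inequality and Lemma~\ref{Lemma:NC-Moreau-Envelope}. This is cleaner: it avoids the auxiliary $\y$-points and the gradient bookkeeping entirely, and it makes the role of the approximation error $\epsilon^2/(200\ell)$ transparent --- it enters once, through the function-value chain, and strong convexity converts it into a distance of order $\epsilon/\ell$. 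The one thing you leave slightly implicit is that $\bar{\Phi}$ must be differentiable for $\grad h_{\bar{\Phi}}(\hat{\x})=\grad\bar{\Phi}(\hat{\x})$ to make sense; this holds because $\bar{f}_\epsilon(\x,\cdot)$ is strongly concave so Danskin applies, and is worth a one-line remark. The complexity count matches the paper's. Overall this is a valid alternative proof, and arguably a tighter one (the paper's $\ell/2$-strong-convexity step could be strengthened to $\ell$, as you do).
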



\section{Proofs for Algorithm Components}
In this section, we present proofs for our algorithm components.

\subsection{Proof of Theorem~\ref{Theorem:AGD}}
We divide the proof into three parts. In the first part, we show that the output $\hat{\x}$ satisfies $g(\hat{\x}) \leq \min_{\x \in \XCal} g(\x) + \epsilon$. In the second part, we derive the sufficient condition for guaranteeing the stopping criteria in Algorithm~\ref{alg:AGD}. In the third part, we derive the gradient complexity of the algorithm using the condition derived in the second part.

\paragraph{Part I.} Let $\tilde{\x}_t = \PX(\x_t - (1/\ell)\grad g(\x_t))$ be defined as the point achieved by one-step projected gradient descent from $\x_t$. Since $g$ is $\ell$-smooth and $\mu$-strongly convex, it is straightforward to derive from~\citet[Corollary~2.3.2]{Nesterov-2018-Lectures} that
\begin{equation*}
g(\x) \ \geq \ g(\tilde{\x}_t) + \ell(\x_t - \tilde{\x}_t)^\top(\x - \x_t) + \frac{\ell}{2}\|\x_t - \tilde{\x}_t\|^2 + \frac{\mu}{2}\|\x - \x_t\|^2, \quad \textnormal{for all } \x \in \XCal. 
\end{equation*}
Using the Young's inequality, we have $(\x_t - \tilde{\x}_t)^\top(\x - \x_t) \geq -(1/2)(\|\x_t - \tilde{\x}_t\|^2 + \|\x - \x_t\|^2)$. Putting these pieces together with $\x = \x^\star$ yields that
\begin{equation*}
g(\tilde{\x}_t) - \min_{\x \in \XCal} g(\x) \ = \ g(\tilde{\x}_t) - g(\x^\star) \ \leq \ \left(\frac{\ell - \mu}{2}\right)\|\x_t - \x^\star\|^2. 
\end{equation*}
Without loss of generality, we assume $\ell > \mu$. Indeed, if $\ell = \mu$, then one-step projected gradient descent from any points in $\XCal$ guarantees that $g(\tilde{\x}_t) - \min_{\x \in \XCal} g(\x) = 0$. Since $\hat{\x} = \tilde{\x}_t$ in Algorithm~\ref{alg:AGD}, it suffices to show that the following statement holds true, 
\begin{equation}\label{criterion:AGD-update}
\|\x_t - \PX(\x_t - (1/\ell)\grad g(\x_t))\| \ \leq \ \sqrt{\frac{\epsilon}{2\kappa^2(\ell - \mu)}} \ \Longrightarrow \ \|\x_t - \x^\star\| \ \leq \ \sqrt{\frac{2\epsilon}{\ell - \mu}}.  
\end{equation}
Let $\tilde{\x}_t = \PX(\x_t - (1/\ell)\grad g(\x_t))$ be defined as the point achieved by one-step projected gradient descent from $\x_t$, the $\ell$-smoothness of $g$ implies
\begin{equation}\label{prop:AGD-inequality-first-app}
\|\tilde{\x}_t - \x^\star\| \ \leq \ \|\x_k - \x^\star\|. 
\end{equation}
Using the definition of $\tilde{\x}_t$ and $\x^\star$, we have
\begin{equation*}
(\x^\star - \tilde{\x}_t)^\top(\ell(\tilde{\x}_t - \x_t) + \grad g(\x_t)) \ \geq \ 0, \qquad (\tilde{\x}_t - \x^\star)^\top\grad g(\x^\star) \ \geq \ 0. 
\end{equation*}
Summing up the above two inequalities and rearranging yields that 
\begin{equation*}
(\x^\star - \x_t)^\top(\grad g(\x_t) - \grad g(\x^\star)) \ \geq \ \ell(\x^\star - \tilde{\x}_t)^\top(\x_t - \tilde{\x}_t) + (\tilde{\x}_t - \x_t)^\top(\grad g(\x_t) - \grad g(\x^\star)). 
\end{equation*}
Since $g$ is $\ell$-smooth and $\mu$-strongly convex, we have
\begin{equation*}
- \mu\|\x_t - \x^\star\|^2 \ \geq \ - \ell\|\x_t - \tilde{\x}_t\|\left(\|\x^\star - \tilde{\x}_t\| + \|\x^\star - \x_t\|\right) \ \overset{~\eqref{prop:AGD-inequality-first-app}}{\geq} \ - 2\ell\|\x_t - \tilde{\x}_t\|\|\x_t - \x^\star\|. 
\end{equation*}
Therefore, we conclude that 
\begin{equation*}
\|\x_t - \x^\star\| \ \leq \ 2\kappa\|\x_t - \tilde{\x}_t\| \ = \ 2\kappa\|\x_t - \PX(\x_t - (1/\ell)\grad g(\x_t))\| \ \overset{~\eqref{criterion:AGD-update}}{\leq} \ \sqrt{\frac{2\epsilon}{\ell - \mu}}. 
\end{equation*}
\paragraph{Part II.} We first show that 
\begin{equation*}
\|\x_t - \x^\star\| \ \leq \ \frac{1}{3\kappa}\sqrt{\frac{\epsilon}{2(\ell - \mu)}} \ \Longrightarrow \ \|\x_t - \PX(\x_t - (1/\ell)\grad g(\x_t))\| \ \leq \ \sqrt{\frac{\epsilon}{2\kappa^2(\ell - \mu)}}. 
\end{equation*}
By the definition of $\x^\star$, we have $\x^\star = \PX(\x^\star - (1/\ell)\grad g(\x^\star))$. This equation together with the triangle inequality and the nonexpansiveness of $\PX$ yields that $\|\x_t - \PX(\x_t - (1/\ell)\grad g(\x_t))\| \leq 3\|\x_t - \x^\star\|$ which implies the desired result. Then we derive a sufficient condition for guaranteeing that $\|\x_t - \x^\star\| \leq (1/(3\kappa))\sqrt{\epsilon/(2(\ell - \mu))}$. Since $g$ is $\mu$-strongly convex and $\x_t \in \XCal$,~\citet[Theorem~2.1.5]{Nesterov-2018-Lectures} together with the fact that $(\x_t - \x^\star)^\top\grad g(\x^\star) \geq 0$ implies that 
\begin{equation*}
\|\x_t - \x^\star\|^2 \ \leq \ \frac{2}{\mu}\left(g(\x_t) - \min_{\x \in \XCal} g(\x)\right).  
\end{equation*}
Putting these pieces together yields the desired sufficient condition as follows,   
\begin{equation}\label{prop:AGD-inequality-second-app}
g(\x_t) - \min_{\x \in \XCal} g(\x) \ \leq \ \frac{\epsilon}{36\kappa^3}. 
\end{equation}
\paragraph{Part III.} We proceed to derive the gradient complexity of the algorithm using the condition in Eq.~\eqref{prop:AGD-inequality-second-app}. Since Algorithm~\ref{alg:AGD} is exactly Nesterov's accelerated gradient descent, standard arguments based on estimate sequence~\citep{Nesterov-2018-Lectures} implies
\begin{equation*}
g(\x_t) - \min_{\x \in \XCal} g(\x) \ \leq \ \left(1 - \frac{1}{\sqrt{\kappa}}\right)^t\left(g(\x_0) - \min_{\x \in \XCal} g(\x) + \frac{\mu\|\x^\star - \x_0\|^2}{2}\right). 
\end{equation*}
Therefore, the gradient complexity of Algorithm~\ref{alg:AGD} to guarantee Eq.~\eqref{prop:AGD-inequality-second-app} is bounded by 
\begin{equation*}
O\left(1 + \sqrt{\kappa}\log\left(\frac{\kappa^3\ell\|\x_0 - \x^\star\|^2}{\epsilon}\right)\right). 
\end{equation*}
This completes the proof. 

\subsection{Proof of Theorem~\ref{Theorem:inexact-APPA}}
Letting $\hat{\x} = \textsc{Inexact-APPA}(g, \x_0, \ell, \mu, \epsilon, T)$. Since $\hat{\x} = \x_T$, it suffices for us to estimate an lower bound for the maximum number of iterations $T$ such that $g(\x_T) \leq \min_{\x \in \XCal} g(\x) + \epsilon$. The following technical lemma is crucial to the subsequent analysis. 
\begin{lemma}\label{Lemma:inexact-APPA}
For any $\x \in \XCal$ and $\{(\x_t, \tilde{\x}_t)\}_{t \geq 0}$ generated by Algorithm~\ref{alg:APPA}, we have
\begin{equation}\label{inequality-inexact-APPA}
g(\x) \ \geq \ g(\x_t) - 2\ell(\x - \tilde{\x}_{t-1})^\top(\x_t - \tilde{\x}_{t-1}) + 2\ell\|\x_t - \tilde{\x}_{t-1}\|^2 + \frac{\mu\|\x - \x_t\|^2}{4} - 7\kappa\delta. 
\end{equation}
\end{lemma}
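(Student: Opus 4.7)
The plan is to pass through the exact proximal minimizer and then transfer the standard proximal inequality to the inexact iterate $\x_t$, paying a price proportional to $\delta$.

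First I would introduce $\x_t^\star := \argmin_{\x \in \XCal} \{g(\x) + \ell\|\x - \tilde{\x}_{t-1}\|^2\}$, the exact solution of the proximal subproblem in Line~\ref{line:proximal}. Since $g$ is $\mu$-strongly convex, the augmented objective $h(\x) := g(\x) + \ell\|\x - \tilde{\x}_{t-1}\|^2$ is $(\mu + 2\ell)$-strongly convex, and combining the inexactness condition $h(\x_t) \leq h(\x_t^\star) + \delta$ with strong convexity at the minimizer gives the key a priori bound $\|\x_t - \x_t^\star\|^2 \leq 2\delta/(\mu+2\ell)$. Denote $\e_t := \x_t - \x_t^\star$, so $\|\e_t\|^2 \leq 2\delta/(\mu+2\ell)$.

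Next I would derive the exact version of the target inequality at $\x_t^\star$. The first-order optimality condition for $\x_t^\star$ over $\XCal$ gives $(\x - \x_t^\star)^\top(\nabla g(\x_t^\star) + 2\ell(\x_t^\star - \tilde{\x}_{t-1})) \geq 0$ for every $\x \in \XCal$, and $\mu$-strong convexity of $g$ yields $g(\x) \geq g(\x_t^\star) + \nabla g(\x_t^\star)^\top(\x - \x_t^\star) + \tfrac{\mu}{2}\|\x - \x_t^\star\|^2$. Combining these two, I obtain
\begin{equation*}
g(\x) \geq g(\x_t^\star) - 2\ell(\x_t^\star - \tilde{\x}_{t-1})^\top(\x - \x_t^\star) + \tfrac{\mu}{2}\|\x - \x_t^\star\|^2,
\end{equation*}
which is precisely the exact-minimizer counterpart of the claim (after using the algebraic identity $-2\ell(\x-\tilde{\x}_{t-1})^\top(\x_t-\tilde{\x}_{t-1}) + 2\ell\|\x_t-\tilde{\x}_{t-1}\|^2 = -2\ell(\x-\x_t)^\top(\x_t-\tilde{\x}_{t-1})$).

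The main obstacle is then the accounting step: translating the inequality above from $\x_t^\star$ to $\x_t$ and collecting all $\delta$-dependent residuals into the single $7\kappa\delta$ term. I would write $\x_t^\star = \x_t - \e_t$ in every occurrence, expand each quadratic and bilinear term, and use the inexactness in the form $g(\x_t^\star) \geq g(\x_t) + \ell\|\x_t - \tilde{\x}_{t-1}\|^2 - \ell\|\x_t^\star - \tilde{\x}_{t-1}\|^2 - \delta$ to replace the leading $g(\x_t^\star)$. This produces the desired main terms plus a residual of the form $(\ell + \mu/2)\|\e_t\|^2 + (2\ell + \mu)\e_t^\top(\x - \x_t) - \delta$. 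To kill the cross term without consuming too much of the quadratic $\tfrac{\mu}{2}\|\x - \x_t\|^2$, I would apply Young's inequality with weight chosen so that half of the quadratic is preserved, giving $(2\ell+\mu)\e_t^\top(\x-\x_t) \geq -\tfrac{\mu}{4}\|\x-\x_t\|^2 - \tfrac{(2\ell+\mu)^2}{\mu}\|\e_t\|^2$.

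Finally, substituting $\|\e_t\|^2 \leq 2\delta/(\mu+2\ell)$ bounds the accumulated error by $(4\kappa + 2)\delta + \delta = (4\kappa + 3)\delta$, which is at most $7\kappa\delta$ since $\kappa \geq 1$ under the hypothesis $\ell > \mu$. The tight bookkeeping of these constants is the only place where care is required; conceptually the proof is a perturbation of the standard proximal inequality.
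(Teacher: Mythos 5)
Your proposal is correct and follows essentially the same route as the paper's proof: pass through the exact proximal minimizer $\x_t^\star$, use $(\mu+2\ell)$-strong convexity of the proximal objective to get $\|\x_t - \x_t^\star\|^2 \leq 2\delta/(\mu+2\ell)$, apply Young's inequality with the same weight to absorb the cross term into $\tfrac{\mu}{4}\|\x - \x_t\|^2$, and collect the error as $(4\kappa+3)\delta \leq 7\kappa\delta$. Your derivation of the exact-minimizer inequality via first-order optimality plus strong convexity of $g$ is just an equivalent rewriting of the paper's strong-convexity bound for the augmented objective, so there is no substantive difference.
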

\begin{proof}
Using the definition of $\x_t$ in Algorithm~\ref{alg:APPA}, we have
\begin{equation*}
g(\x_t) + \ell\|\x_t - \tilde{\x}_{t-1}\|^2 \ \leq \ \min_{\x \in \XCal} \left\{g(\x) + \ell\|\x - \tilde{\x}_{t-1}\|^2\right\} + \delta. 
\end{equation*}
Defining $\x_t^\star = \argmin_{\x \in \XCal} \{g(\x) + \ell\|\x - \tilde{\x}_{t-1}\|^2\}$ and using $\mu$-strongly convexity of $g$, we have the following for any $\x \in \XCal$: 
\begin{equation*}
g(\x) \ \geq \ g(\x_t^\star) + \ell\|\x_t^\star - \tilde{\x}_{t-1}\|^2 - \ell\|\x - \tilde{\x}_{t-1}\|^2 + \left(\ell + \frac{\mu}{2}\right)\|\x - \x_t^\star\|^2. 
\end{equation*}
Equivalently, we have
\begin{eqnarray*}
g(\x) & \geq & g(\x_t) + \ell\|\x_t - \tilde{\x}_{t-1}\|^2 - \ell\|\x - \tilde{\x}_{t-1}\|^2 + \left(\ell + \frac{\mu}{2}\right)\|\x - \x_t^\star\|^2 - \delta \\
& \geq & g(\x_t) - 2\ell(\x - \x_t)^\top(\x_t - \tilde{\x}_{t-1}) - \ell\|\x - \x_t\|^2 + \left(\ell + \frac{\mu}{2}\right)\|\x - \x_t^\star\|^2 - \delta. 
\end{eqnarray*}
On the other hand, we have
\begin{equation*}
\left(\ell + \frac{\mu}{2}\right)\|\x - \x_t^\star\|^2 - \ell\|\x - \x_t\|^2 \ = \ \frac{\mu\|\x - \x_t\|^2}{2} + (2\ell + \mu)(\x - \x_t)^\top(\x_t - \x_t^\star) + \left(\ell + \frac{\mu}{2}\right)\|\x_t - \x_t^\star\|^2
\end{equation*}
Using Young's inequality yields 
\begin{equation*}
(\x - \x_t)^\top(\x_t - \x_t^\star) \ \geq \ -\frac{\mu\|\x - \x_t\|^2}{4(2\ell+\mu)} - (1+2\kappa)\|\x_t - \x_t^\star\|^2. 
\end{equation*}
Putting these pieces together yields that 
\begin{equation*}
g(\x) \ \geq \ g(\x_t) - 2\ell(\x - \x_t)^\top(\x_t - \tilde{\x}_{t-1}) + \frac{\mu\|\x - \x_t\|^2}{4} - (2\ell + \mu)(1+2\kappa)\|\x_t - \x_t^\star\|^2 - \delta. 
\end{equation*}
Furthermore, we have
\begin{equation*}
(\x - \x_t)^\top(\x_t - \tilde{\x}_{t-1}) \ = \ (\x - \tilde{\x}_{t-1})^\top(\x_t - \tilde{\x}_{t-1}) - \|\x_t - \tilde{\x}_{t-1}\|^2, 
\end{equation*}
and 
\begin{equation*}
\|\x_t - \x_t^\star\|^2 \ \leq \ \frac{2}{\mu + 2\ell}\left(g(\x_t) + \ell\|\x_t - \tilde{\x}_{t-1}\|^2 - \min_{\x \in \XCal} \left\{g(\x) + \ell\|\x - \tilde{\x}_{t-1}\|^2\right\}\right) \ \leq \ \frac{2\delta}{\mu + 2\ell}.
\end{equation*}
Putting these pieces together with $\kappa \geq 1$ yields the desired inequality. 
\end{proof}
The remaining proof is based on Lemma~\ref{Lemma:inexact-APPA}. Indeed, we have 
\begin{eqnarray*}
& & \left(1 - \frac{1}{2\sqrt{\kappa}}\right)g(\x_{t-1}) + \frac{1}{2\sqrt{\kappa}}\left(g(\x^\star) + 14\kappa^{3/2}\delta\right) \\
& \overset{\textnormal{Eq.~\eqref{inequality-inexact-APPA}}}{\geq} & \left(1 - \frac{1}{2\sqrt{\kappa}}\right)\left(g(\x_t) - 2\ell(\x_{t-1} - \tilde{\x}_{t-1})^\top(\x_t - \tilde{\x}_{t-1}) + 2\ell\|\x_t - \tilde{\x}_{t-1}\|^2 + \frac{\mu\|\x_{t-1} - \x_t\|^2}{4} - 7\kappa\delta\right) \\
& & + \frac{1}{2\sqrt{\kappa}}\left(g(\x_t) - 2\ell(\x^\star - \tilde{\x}_{t-1})^\top(\x_t - \tilde{\x}_{t-1}) + 2\ell\|\x_t - \tilde{\x}_{t-1}\|^2 + \frac{\mu\|\x^\star - \x_t\|^2}{4} - 7\kappa\delta\right) + 7\kappa\delta \\
& = & g(\x_t) - 2\ell\left(\left(1 - \frac{1}{2\sqrt{\kappa}}\right)\x_{t-1} + \frac{\x^\star}{2\sqrt{\kappa}} - \tilde{\x}_{t-1}\right)^\top(\x_t - \tilde{\x}_{t-1}) + 2\ell\|\x_t - \tilde{\x}_{t-1}\|^2 + \frac{\mu\|\x^\star - \x_t\|^2}{8\sqrt{\kappa}}. 
\end{eqnarray*}
Equivalently, we have
\begin{eqnarray}\label{inexact-APPA-inequality-first}
g(\x_t) - g(\x^\star) & \leq & \left(1 - \frac{1}{2\sqrt{\kappa}}\right)\left(g(\x_{t-1}) - g(\x^\star)\right) + 2\ell\left(\left(1 - \frac{1}{2\sqrt{\kappa}}\right)\x_{t-1} + \frac{\x^\star}{2\sqrt{\kappa}} - \tilde{\x}_{t-1}\right)^\top(\x_t - \tilde{\x}_{t-1}) \nonumber \\ 
& & - 2\ell\|\x_t - \tilde{\x}_{t-1}\|^2 - \frac{\mu\|\x^\star - \x_t\|^2}{8\sqrt{\kappa}} + 7\kappa\delta. 
\end{eqnarray}
Consider $\tilde{\x}_t = \x_t + \frac{2\sqrt{\kappa}-1}{2\sqrt{\kappa}+1}(\x_t - \x_{t-1})$, we let $\w_t = \tilde{\x}_t + 2\sqrt{\kappa}(\tilde{\x}_t - \x_t)$ and obtain that
\begin{eqnarray*}
\w_t & = & (1 + 2\sqrt{\kappa})\tilde{\x}_t - 2\sqrt{\kappa}\x_t \ = \ 2\sqrt{\kappa}\x_t - (2\sqrt{\kappa}-1)\x_{t-1} \ = \ \left(1 - \frac{1}{2\sqrt{\kappa}}\right)\w_{t-1} + 2\sqrt{\kappa}\x_t - \frac{4\kappa-1}{2\sqrt{\kappa}}\tilde{\x}_{t-1} \\
& = & \left(1 - \frac{1}{2\sqrt{\kappa}}\right)\w_{t-1} + 2\sqrt{\kappa}\left(\x_t - \tilde{\x}_{t-1}\right) + \frac{\tilde{\x}_{t-1}}{2\sqrt{\kappa}}. 
\end{eqnarray*}
This implies that 
\begin{eqnarray}\label{inexact-APPA-inequality-second}
\|\w_t - \x^\star\|^2 & = & \left\|\left(1 - \frac{1}{2\sqrt{\kappa}}\right)\w_{t-1} + \frac{\tilde{\x}_{t-1}}{2\sqrt{\kappa}} - \x^\star + 2\sqrt{\kappa}\left(\x_t - \tilde{\x}_{t-1}\right)\right\|^2 \\ 
& & \hspace*{-6em} = \left\|\left(1 - \frac{1}{2\sqrt{\kappa}}\right)\w_{t-1} + \frac{\tilde{\x}_{t-1}}{2\sqrt{\kappa}} - \x^\star\right\|^2 + 4\sqrt{\kappa}\left(\left(1 - \frac{1}{2\sqrt{\kappa}}\right)\w_{t-1} + \frac{\tilde{\x}_{t-1}}{2\sqrt{\kappa}} - \x^\star\right)^\top\left(\x_t - \tilde{\x}_{t-1}\right) \nonumber \\
& & \hspace*{-5em} + 4\kappa\|\x_t - \tilde{\x}_{t-1}\|^2. \nonumber
\end{eqnarray}
Since $\w_{t-1} = \tilde{\x}_{t-1} + 2\sqrt{\kappa}(\tilde{\x}_{t-1} - \x_{t-1})$, we have 
\begin{equation}\label{inexact-APPA-inequality-third}
(1 - \frac{1}{2\sqrt{\kappa}})\w_{t-1} + \frac{\tilde{\x}_{t-1}}{2\sqrt{\kappa}} \ = \ 2\sqrt{\kappa}\tilde{\x}_{t-1} - (2\sqrt{\kappa}-1)\x_{t-1}. 
\end{equation}
Using the Young's inequality, we have
\begin{eqnarray}\label{inexact-APPA-inequality-fourth}
& & \left\|\left(1 - \frac{1}{2\sqrt{\kappa}}\right)\w_{t-1} + \frac{\tilde{\x}_{t-1}}{2\sqrt{\kappa}} - \x^\star\right\|^2 \\
& \leq & \left(1 - \frac{1}{2\sqrt{\kappa}}\right)^2\left(1 + \frac{5}{8\sqrt{\kappa} -5}\right)\|\w_{t-1} - \x^\star\|^2 + \frac{1}{4\kappa}\left(1 + \frac{8\sqrt{\kappa} -5}{5}\right)\|\tilde{\x}_{t-1} - \x^\star\|^2 \nonumber \\ 
& \leq & \left(1 - \frac{1}{2\sqrt{\kappa}}\right)\left(1 + \frac{1}{8\sqrt{\kappa} -5}\right)\|\w_{t-1} - \x^\star\|^2 + \frac{2\|\tilde{\x}_{t-1} - \x^\star\|^2}{5\sqrt{\kappa}}  \nonumber \\ 
& \leq & \left(1 - \frac{1}{6\sqrt{\kappa}}\right)\|\w_{t-1} - \x^\star\|^2 + \frac{2\|\tilde{\x}_{t-1} - \x^\star\|^2}{5\sqrt{\kappa}}. \nonumber
\end{eqnarray}
Using the Young's inequality again, we have
\begin{equation}\label{inexact-APPA-inequality-fifth}
\|\tilde{\x}_{t-1} - \x^\star\|^2 \ \leq \ \frac{5\|\x^\star - \x_t\|^2}{4} + 5\|\tilde{\x}_{t-1} - \x_t\|^2. 
\end{equation}
Putting Eq.~\eqref{inexact-APPA-inequality-second}-Eq.~\eqref{inexact-APPA-inequality-fifth} together with $\kappa \geq 1$, we have
\begin{eqnarray}\label{inexact-APPA-inequality-sixth}
\|\w_t - \x^\star\|^2 & \leq & \left(1 - \frac{1}{6\sqrt{\kappa}}\right)\|\w_{t-1} - \x^\star\|^2 + \frac{\|\x^\star - \x_t\|^2}{2\sqrt{\kappa}} + 6\kappa\|\x_t - \tilde{\x}_{t-1}\|^2 \\
& & + 8\kappa\left(\tilde{\x}_{t-1} - \left(1 - \frac{1}{2\sqrt{\kappa}}\right)\x_{t-1} - \frac{\x^\star}{2\sqrt{\kappa}}\right)^\top\left(\x_t - \tilde{\x}_{t-1}\right). \nonumber
\end{eqnarray}
Combining Eq.~\eqref{inexact-APPA-inequality-first} and Eq.~\eqref{inexact-APPA-inequality-sixth} yields that 
\begin{eqnarray*}
g(\x_t) - g(\x^\star) + \frac{\mu\|\w_t - \x^\star\|^2}{4} & \leq & \left(1 - \frac{1}{2\sqrt{\kappa}}\right)\left(g(\x_{t-1}) - g(\x^\star)\right) + \left(1 - \frac{1}{6\sqrt{\kappa}}\right)\frac{\mu\|\w_{t-1} - \x^\star\|^2}{4} + 7\kappa\delta \\
& \leq & \left(1 - \frac{1}{6\sqrt{\kappa}}\right)\left(g(\x_{t-1}) - g(\x^\star) + \frac{\mu\|\w_{t-1} - \x^\star\|^2}{4}\right) + 7\kappa\delta. 
\end{eqnarray*}
Repeating the above inequality yields that 
\begin{equation*}
g(\x_T) - g(\x^\star) + \frac{\mu\|\w_T - \x^\star\|^2}{4} \ \leq \ \left(1 - \frac{1}{6\sqrt{\kappa}}\right)^T\left(g(\x_0) - g(\x^\star) + \frac{\mu\|\x_0 - \x^\star\|^2}{4}\right) + 42\kappa^{3/2}\delta. 
\end{equation*}
Therefore, we conclude that
\begin{equation*}
g(\x_T) - g(\x^\star) \ \leq \ \left(1 - \frac{1}{6\sqrt{\kappa}}\right)^T\left(g(\x_0) - g(\x^\star) + \frac{\mu\|\x_0 - \x^\star\|^2}{4}\right) + 42\kappa^{3/2}\delta. 
\end{equation*}
Since the tolerance $\delta \leq \epsilon\kappa^{-3/2}/84$, we conclude that the iteration complexity of Algorithm~\ref{alg:APPA} to guarantee that $g(\x_T) - \min_{\x \in \XCal} g(\x) \leq \epsilon$ if there exists an absolute constant $c>0$ such that 
\begin{equation*}
T \geq \ c\sqrt{\kappa}\log\left(\frac{g(\x_0) - g(\x^\star) + (\mu/4)\|\x_0 - \x^\star\|^2}{\epsilon}\right). 
\end{equation*}
This completes the proof. 

\subsection{Proof of Theorem~\ref{Theorem:maximin-AG2}}
Before presenting the main proof, we define the following important functions: 
\begin{equation*}
\begin{array}{ll}
\Phi_g(\cdot) \ = \ \max_{\y \in \YCal} \ g(\cdot, \y), & \qquad \y_g^\star(\cdot) \ = \ \argmax_{\y \in \YCal} \ g(\cdot, \y), \\
\Psi_g(\cdot) \ = \ \min_{\x \in \XCal} \ g(\x, \cdot), & \qquad \x_g^\star(\cdot) \ = \ \argmin_{\x \in \XCal} \ g(\x, \cdot). 
\end{array}
\end{equation*}
All the above functions are well defined since $g(\cdot, \cdot)$ is strongly convex-concave. We provide their complete characterization in the following structural lemma. 
\begin{lemma}\label{Lemma:minimaxAG2-structure}
Under the assumptions imposed in Theorem~\ref{Theorem:maximin-AG2}, we have
\begin{enumerate}[(a)]
\item A function $\y_g^\star(\cdot)$ is $\kappay$-Lipschitz.
\item A function $\Phi_g(\cdot)$ is $2\kappay\ell$-smooth and $\mu_\x$-strongly convex with $\nabla\Phi_g(\cdot) = \gradx g(\cdot, \y_g^\star(\cdot))$. 
\item A function $\x_g^\star(\cdot)$ is $\kappax$-Lipschitz. 
\item A function $\Psi_g(\cdot)$ is $2\kappax\ell$-smooth and $\mu_\y$-strongly concave with $\nabla\Psi_g(\cdot) = \grady g(\x_g^\star(\cdot), \cdot)$. 
\end{enumerate}
where $\kappax = \ell/\mu_\x$ and $\kappay = \ell/\mu_\y$ are condition numbers. 
\end{lemma}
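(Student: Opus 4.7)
\proof
The plan is to prove all four claims using standard Danskin-type arguments, with parts (c) and (d) following by symmetry from (a) and (b). Both $\y_g^\star(\x)$ and $\x_g^\star(\y)$ are well-defined single-valued maps because of the strong concavity of $g(\x,\cdot)$ and strong convexity of $g(\cdot,\y)$, so I may freely differentiate the value functions via the envelope formula.

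For part (a), I would start from the first-order optimality condition for $\y_g^\star(\x)$: for any $\x \in \XCal$ and any $\y \in \YCal$,
\begin{equation*}
\langle \grady g(\x, \y_g^\star(\x)),\, \y - \y_g^\star(\x) \rangle \le 0.
\end{equation*}
Writing this twice, once at $\x_1$ with test point $\y_g^\star(\x_2)$ and once at $\x_2$ with test point $\y_g^\star(\x_1)$, and adding gives
\begin{equation*}
\langle \grady g(\x_1, \y_g^\star(\x_1)) - \grady g(\x_2, \y_g^\star(\x_2)),\, \y_g^\star(\x_2) - \y_g^\star(\x_1) \rangle \le 0.
\end{equation*}
I would then insert and subtract $\grady g(\x_2, \y_g^\star(\x_1))$ and apply the $\mu_\y$-strong concavity of $g(\x_2,\cdot)$ (giving a $\mu_\y \|\y_g^\star(\x_1)-\y_g^\star(\x_2)\|^2$ term) and the $\ell$-smoothness of $g$ (bounding the cross term by $\ell\|\x_1-\x_2\|\cdot\|\y_g^\star(\x_1)-\y_g^\star(\x_2)\|$). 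Dividing out yields the $\kappa_\y$-Lipschitz bound.

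For part (b), I would first invoke Danskin's theorem to identify $\nabla \Phi_g(\x) = \gradx g(\x, \y_g^\star(\x))$ (this is valid because $\y_g^\star$ is single-valued and continuous by (a)). Smoothness then follows immediately:
\begin{equation*}
\|\nabla \Phi_g(\x_1) - \nabla \Phi_g(\x_2)\| \le \ell \sqrt{\|\x_1 - \x_2\|^2 + \|\y_g^\star(\x_1) - \y_g^\star(\x_2)\|^2} \le \ell\sqrt{1 + \kappa_\y^2}\, \|\x_1 - \x_2\| \le 2\kappa_\y \ell \|\x_1-\x_2\|,
\end{equation*}
where the last step uses $\kappa_\y \ge 1$. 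For strong convexity, I would use the chain
\begin{equation*}
\Phi_g(\x_1) \ge g(\x_1, \y_g^\star(\x_2)) \ge g(\x_2, \y_g^\star(\x_2)) + \langle \gradx g(\x_2, \y_g^\star(\x_2)), \x_1 - \x_2 \rangle + \tfrac{\mu_\x}{2}\|\x_1 - \x_2\|^2,
\end{equation*}
where the first step is the definition of $\Phi_g$ and the second uses the $\mu_\x$-strong convexity of $g(\cdot, \y_g^\star(\x_2))$; recognizing the linear term as $\langle \nabla \Phi_g(\x_2), \x_1 - \x_2\rangle$ finishes the argument.

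Parts (c) and (d) are perfectly symmetric: swap the roles of $\x$ and $\y$, replace $\max$ by $\min$, and flip signs. The main (mild) obstacle is the smoothness bound in (b): one must be careful that the naive estimate $\ell(1+\kappa_\y)$ can be cleanly absorbed into the stated $2\kappa_\y\ell$, which holds so long as $\kappa_\y \ge 1$; this is not restrictive since otherwise we could reset $\mu_\y \leftarrow \ell$ without loss. Everything else is routine envelope-theorem bookkeeping.
\endproof
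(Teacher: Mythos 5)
Your proof is correct and follows essentially the same route as the paper's: first-order optimality plus strong concavity/smoothness monotonicity for the Lipschitz bounds on $\y_g^\star$ and $\x_g^\star$, Danskin's theorem plus composition for the gradient identities and smoothness of $\Phi_g$, $\Psi_g$, and the max/min envelope inequality plus strong convexity/concavity for the modulus. The only cosmetic difference is that you bound $\|\nabla\Phi_g(\x_1) - \nabla\Phi_g(\x_2)\|$ via the joint $\ell$-Lipschitzness of $\nabla g$ as $\ell\sqrt{1+\kappay^2}\|\x_1-\x_2\|$ rather than the triangle-inequality bound $\ell(1+\kappay)\|\x_1-\x_2\|$ the paper uses, but both are absorbed into $2\kappay\ell$ since $\kappay\ge 1$.
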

Now we are ready to prove Theorem~\ref{Theorem:maximin-AG2}. We divide the proof into three parts. In the first part, we show that the output $\hat{\x} = \textsc{\AGPROX}(g, \x_0, \y_0, \ell, \mu_\x, \mu_\y, \epsilon)$ satisfies 
\begin{equation}\label{criterion:maximin-AG2}
\max_{\y\in\YCal} \ g(\hat{\x}, \y) \ \leq \ \min_{\x \in \XCal} \max_{\y \in \YCal} \ g(\x, \y) + \epsilon
\end{equation}
In the second part, we get the sufficient condition for guaranteeing the stopping criteria in Algorithm~\ref{alg:Proxy}. In the third part, we estimate an upper bound for the gradient complexity of the algorithm using the condition derived in the second part. For the ease of presentation, we denote $(\x_g^\star, \y_g^\star)$ as the unique solution to the minimax optimization $\min_{\x \in \XCal} \max_{\y \in \YCal} g(\x, \y)$.    

\paragraph{Part I.} By the definition of $\Phi_g$, the inequality in Eq.~\eqref{criterion:maximin-AG2} can be rewritten as follows, 
\begin{equation*}
\Phi_g(\hat{\x}) \ \leq \ \min_{\x \in \XCal} \ \Phi_g(\x) + \epsilon.  
\end{equation*}
Since $\hat{\x} = \proj_\XCal(\x_T - (1/2\kappa_\y\ell)\gradx g(\x_T, \y_T))$, we have 
\begin{eqnarray*}
0 & \leq & (\x - \hat{\x})^\top\left(2\kappa_\y\ell(\hat{\x} - \x_T) + \gradx g(\x_T, \y_T)\right) \\
& = & (\x - \hat{\x})^\top(2\kappa_\y\ell(\hat{\x} - \x_T) + \nabla\Phi_g(\x_T)) + (\x - \hat{\x})^\top(\gradx g(\x_T, \y_T) - \nabla\Phi_g(\x_T)). 
\end{eqnarray*}
Since $\grad\Phi_g(\x_T) = \gradx g(\x_T, \y_g^\star(\x_T))$, we have $\|\gradx g(\x_T, \y_T) - \nabla\Phi_g(\x_T)\| \leq \ell\|\y_T - \y_g^\star(\x_T)\|$. Using the Young's inequality, we have
\begin{equation*}
(\x - \hat{\x})^\top(\gradx g(\x_T, \y_T) - \nabla\Phi_g(\x_T)) \ \leq \ \frac{\kappay\ell\|\hat{\x} - \x_T\|^2}{2} + \frac{\kappay\ell\|\x - \x_T\|^2}{2} + \mu_\y\|\y_T - \y_g^\star(\x_T)\|^2. 
\end{equation*}
Since $\Phi_g$ is $2\kappay\ell$-smooth and $\mu_\x$-strongly convex, we have
\begin{eqnarray*}
(\x - \hat{\x})^\top(2\kappay\ell(\hat{\x} - \x_T) + \grad \Phi_g(\x_T)) & \leq & 2\kappay\ell(\x - \x_T)^\top(\hat{\x} - \x_T) + \Phi_g(\x) - \Phi_g(\hat{\x}) \\
& & \hspace*{-4em} - \kappay\ell\|\hat{\x} - \x_T\|^2 - \frac{\mu_\x\|\x - \x_T\|^2}{2}.
\end{eqnarray*}
Using the Young's inequality, we have $(\x - \x_T)^\top(\hat{\x} - \x_T) \leq \|\x - \x_T\|^2 + (1/4)\|\hat{\x} - \x_T\|^2$. Putting these pieces together yields with $\x = \x_g^\star$ yields that 
\begin{equation}\label{maximin-AG2-inequality-main}
\Phi_g(\hat{\x}) - \min_{\x \in \XCal} \ \Phi_g(\x) \ \leq \ 3\kappa_\y\ell\|\x_T - \x_g^\star\|^2 + \mu_\y\|\y_T - \y_g^\star(\x_T)\|^2. 
\end{equation}
In what follows, we prove that $\Phi_g(\hat{\x}) \leq \min_{\x \in \XCal} \Phi_g(\x) + \epsilon$ if the following stopping conditions hold true,  
\begin{align}
g(\x_T, \y_T) - g(\x_g^\star(\y_T), \y_T) & \leq \ \frac{\epsilon}{648\kappax^3\kappay^3}, \label{criterion:x-update} \\
\|\y_T - \proj_\YCal(\y_T + (1/2\kappax\ell)\grady g(\x_T, \y_T))\| & \leq \ \frac{1}{24\kappax^2\kappay}\sqrt{\frac{\epsilon}{\kappay\ell}}. \label{criterion:y-update}
\end{align}
Indeed, we observe that $\|\x_T - \x_g^\star\| \leq \|\x_T - \x_g^\star(\y_T)\| + \|\x_g^\star(\y_T) - \x_g^\star(\y_g^\star)\| + \|\x_g^\star(\y_g^\star) - \x_g^\star\|$. By definition, we have $\x_g^\star(\y_g^\star) = \x_g^\star$. Also, $\x_g^\star(\cdot)$ is $\kappax$-Lipschitz. Therefore, we have
\begin{equation}\label{maximin-AG2-inequality-first-x}
\|\x_T - \x_g^\star\| \ \leq \ \|\x_T - \x_g^\star(\y_T)\| + \kappax\|\y_T - \y_g^\star\|. 
\end{equation}
By the similar argument, we have
\begin{equation}\label{maximin-AG2-inequality-first-y}
\|\y_T - \y_g^\star(\x_T)\| \ \leq \ \|\y_T - \y_g^\star\| + \kappa_\y\|\x_T - \x_g^\star\| \ \leq \ \kappay\|\x_T - \x_g^\star(\y_T)\| + \kappax\kappay\|\y_T - \y_g^\star\|. 
\end{equation}
First, we bound the term $\|\x_T - \x_g^\star(\y_T)\|$. Since $g(\cdot, \y_T)$ is $\mu_\x$-strongly convex, we have
\begin{equation}\label{maximin-AG2-inequality-second}
\|\x_T - \x_g^\star(\y_T)\| \ \leq \ \sqrt{\frac{2(g(\x_T, \y_T) - g(\x^\star(\y_T), \y_T))}{\mu_\x}} \ \leq \ \frac{1}{18\kappax\kappay}\sqrt{\frac{\epsilon}{\kappay\ell}}
\end{equation}
It remains to bound the term $\|\y_T - \y_g^\star\|$. Indeed, we have $\grad\Psi_g(\y_T) = \grady g(\x_g^\star(\y_T), \y_T)$ and 
\begin{eqnarray*}
\|\y_T - \PY(\y_T + (1/2\kappax\ell)\nabla \Psi_g(\y_T))\| & \leq & \|\y_T - \PY(\y_T + (1/2\kappax\ell)\grady g(\x_T, \y_T))\| \\
& & \hspace{-6em} + \|\PY(\y_T + (1/2\kappax\ell)\grady g(\x_T, \y_T)) - \PY(\y_T + (1/2\kappax\ell)\nabla \Psi_g(\y_T))\|.
\end{eqnarray*}
Since $\PY$ is nonexpansive and $\grady g$ is $\ell$-Lipschitz, we have
\begin{equation*}
\|\PY(\y_T + (1/2\kappax\ell)\grady g(\x_T, \y_T)) - \PY(\y_T + (1/2\kappax\ell)\nabla \Psi_g(\y_T))\| \ \leq \ \frac{\|\x_T - \x_g^\star(\y_T)\|}{2\kappax}. 
\end{equation*}
Putting these pieces together with Eq.~\eqref{criterion:y-update} and Eq.~\eqref{maximin-AG2-inequality-second} yields that 
\begin{equation}\label{maximin-AG2-inequality-third}
\|\y_T - \PY(\y_T + (1/2\kappax\ell)\nabla \Psi_g(\y_T))\| \ \leq \ \frac{1}{18\kappax^2\kappay}\sqrt{\frac{\epsilon}{\kappay\ell}}. 
\end{equation}
Since $\y_g^\star = \argmax_{\y \in \YCal} \Psi_g(\y)$ and $\tilde{\y}_T = \PY(\y_T + (1/2\kappax\ell)\grad \Psi_g(\y_T))$ is achieved by one-step projected gradient ascent from $\y_T$, we derive from the $2\kappax\ell$-smoothness of $\Psi_g$, we have
\begin{equation}\label{maximin-AG2-inequality-fourth}
\|\tilde{\y}_T - \y_g^\star\| \ \leq \ \|\y_T - \y_g^\star\|. 
\end{equation}
Using the definition of $\tilde{\y}_T$ and $\y_g^\star$, we have
\begin{equation*}
(\y_g^\star - \tilde{\y}_T)^\top\left(\tilde{\y}_T - \y_T - (1/2\kappax\ell) \grad\Psi_g(\y_T)\right) \ \geq \ 0, \qquad (\y_g^\star - \tilde{\y}_T)^\top\grad\Psi_g(\y_g^\star) \ \geq \ 0. 
\end{equation*}
Summing up the above two inequalities and rearranging yields that 
\begin{equation*}
(\y_g^\star - \y_T)^\top(\grad\Psi_g(\y_g^\star) - \grad\Psi_g(\y_T)) \ \geq \ 2\kappax\ell(\y_g^\star - \tilde{\y}_T)^\top(\y_T - \tilde{\y}_T) + (\tilde{\y}_T - \y_T)^\top(\grad\Psi_g(\y_g^\star) - \grad\Psi_g(\y_T)). 
\end{equation*}
Since $\Psi_g$ is $2\kappax\ell$-smooth and $\mu_\y$-strongly concave, we have
\begin{equation*}
- \mu_\y\|\y_g^\star - \y_T\|^2 \ \geq \ - 2\kappax\ell\|\tilde{\y}_T - \y_T\|\left(\|\y_g^\star - \tilde{\y}_T\| + \|\y_g^\star - \y_T\|\right) \ \overset{~\eqref{maximin-AG2-inequality-fourth}}{\geq} \ - 4\kappax\ell\|\tilde{\y}_T - \y_T\|\|\y_g^\star - \y_T\|. 
\end{equation*}
This implies that 
\begin{equation}\label{maximin-AG2-inequality-fifth}
\|\y_g^\star - \y_T\| \ \leq \ 4\kappax\kappay\|\y_T - \tilde{\y}_T\| \ \overset{~\eqref{maximin-AG2-inequality-third}}{\leq} \ \frac{1}{4\kappax}\sqrt{\frac{\epsilon}{\kappay\ell}}. 
\end{equation}
Plugging Eq.~\eqref{maximin-AG2-inequality-second} and Eq.~\eqref{maximin-AG2-inequality-fifth} into Eq.~\eqref{maximin-AG2-inequality-first-x} yields that 
\begin{equation*}
\|\x_T - \x_g^\star\| \ \leq \ \left(\frac{1}{18\kappax\kappay} + \frac{1}{4}\right)\sqrt{\frac{\epsilon}{\kappay\ell}} \ \overset{\kappax, \kappay \geq 1}{\leq} \ \frac{1}{2}\sqrt{\frac{\epsilon}{\kappay\ell}}. 
\end{equation*}
Plugging Eq.~\eqref{maximin-AG2-inequality-second} and Eq.~\eqref{maximin-AG2-inequality-fifth} into Eq.~\eqref{maximin-AG2-inequality-first-y} yields that 
\begin{equation*}
\|\y_T - \y_g^\star(\x_T)\| \ \leq \ \left(\frac{1}{18\kappax} + \frac{\kappa_\y}{4}\right)\sqrt{\frac{\epsilon}{\kappay\ell}} \ \overset{\kappax, \kappay \geq 1}{\leq} \ \frac{1}{2}\sqrt{\frac{\kappay\epsilon}{\ell}}. 
\end{equation*}
Putting these pieces together Eq.~\eqref{maximin-AG2-inequality-main} yields the desired result. 
\paragraph{Part II.} We first show that $\|\y_T - \y_g^\star\| \leq (1/216\kappax^2\kappay)\sqrt{\epsilon/\kappay\ell}$ and Eq.~\eqref{criterion:x-update} are sufficient to guarantee Eq.~\eqref{criterion:y-update}. Indeed, we have $\y_g^\star = \proj_\YCal(\y_g^\star + (1/2\kappax\ell)\grad\Psi_g(\y_g^\star))$. This together with the triangle inequality and the nonexpansiveness of $\PY$ yields
\begin{equation*}
\|\y_T - \PY(\y_T + (1/2\kappax\ell)\grady g(\x_T, \y_T))\| \ \leq \ 2\|\y_T - \y_g^\star\| + \frac{\|\grady g(\x_T, \y_T) - \grad\Psi_g(\y_g^\star)\|}{2\kappax\ell}. 
\end{equation*}
Furthermore, $\nabla\Psi_g(\y_T) = \grady g(\x^\star(\y_T), \y_T)$ and 
\begin{equation*}
\|\grady g(\x_T, \y_T) - \grad\Psi_g(\y_g^\star)\| \ \leq \ \|\grady g(\x_T, \y_T) - \grady g(\x_g^\star(\y_T), \y_T)\| + \|\grad\Psi_g(\y_T) - \grad\Psi_g(\y_g^\star)\|. 
\end{equation*}
Since $g$ is $\ell$-smooth and $\Psi_g$ is $2\kappax\ell$-smooth, we have
\begin{equation*}
\|\grady g(\x_T, \y_T) - \grad\Psi_g(\y_g^\star)\| \ \leq \ \ell\|\x_T - \x_g^\star(\y_T)\| + 2\kappax\ell\|\y_T - \y_g^\star\|. 
\end{equation*}
Also, Eq.~\eqref{criterion:x-update} guarantees that Eq.~\eqref{maximin-AG2-inequality-second} holds true. Then we have
\begin{equation*}
\|\y_T - \proj_\YCal(\y_T + (1/2\kappax\ell)\grady g(\x_T, \y_T))\| \ \leq \ 3\|\y_T - \y_g^\star\| + \frac{1}{36\kappax^2\kappay}\sqrt{\frac{\epsilon}{\kappay\ell}}. 
\end{equation*}
The above inequality together with $\|\y_T - \y_g^\star\| \leq (1/216\kappax^2\kappay)\sqrt{\epsilon/\kappay\ell}$ guarantees Eq.~\eqref{criterion:y-update}. Next we derive a sufficient condition for guaranteeing $\|\y_T - \y_g^\star\| \leq (1/216\kappax^2\kappay)\sqrt{\epsilon/\kappay\ell}$. Since $\Psi_g$ is $\mu_\y$-strongly concave,~\citet[Theorem~2.1.5]{Nesterov-2018-Lectures} implies that 
\begin{equation*}
\|\y_T - \y_g^\star\|^2 \ \leq \ \frac{2}{\mu_\y} \left(\max_{\y \in \YCal} \Psi_g(\y) - \Psi_g(\y_T)\right). 
\end{equation*}
Putting these pieces together yields the desired condition as follows, 
\begin{equation}\label{maximin-AG2-inequality-sixth}
\max_{\y \in \YCal} \ \Psi_g(\y) - \Psi_g(\y_T) \ \leq \ \frac{\epsilon}{93312\kappax^4\kappay^4}. 
\end{equation}
\paragraph{Part III.} We proceed to estimate an upper bound for the gradient complexity of Algorithm~\ref{alg:Proxy} using Eq.~\eqref{maximin-AG2-inequality-sixth}. Note that $\tilde{\epsilon} \leq \epsilon/(4477676(\kappax\kappay)^{11/2})$ and we provide a key technical lemma which is crucial to the subsequent analysis.   
\begin{lemma}\label{lemma:maximin-AG2}
For any $\y \in \YCal$ and $\{(\y_t, \tilde{\y}_t)\}_{t \geq 0}$ generated by Algorithm~\ref{alg:Proxy}, we have
\begin{equation*}
\Psi_g(\y) \ \leq \ 2\kappax\ell(\y - \tilde{\y}_{t-1})^\top(\y_t - \tilde{\y}_{t-1}) + \Psi_g(\y_t) - \frac{\kappax\ell\|\y_t - \tilde{\y}_{t-1}\|^2}{2} - \frac{\mu_\y\|\y - \tilde{\y}_{t-1}\|^2}{4} + 3\kappax\kappay\tilde{\epsilon}. 
\end{equation*}
\end{lemma}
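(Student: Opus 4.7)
The inequality is a one-step progress bound for \emph{inexact} projected gradient ascent on $\Psi_g$. By Lemma~\ref{Lemma:minimaxAG2-structure}, $\Psi_g$ is $(2\kappax\ell)$-smooth and $\mu_\y$-strongly concave with $\nabla\Psi_g(\cdot) = \grady g(\x_g^\star(\cdot), \cdot)$, while $\y_t$ arises from a single projected step from $\tilde{\y}_{t-1}$ with stepsize $\eta = 1/(2\kappax\ell)$---exactly $1/L$ where $L = 2\kappax\ell$ is the smoothness constant of $\Psi_g$---but using the surrogate $\grady g(\tilde{\x}_{t-1}, \tilde{\y}_{t-1})$ in place of $\nabla\Psi_g(\tilde{\y}_{t-1})$. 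The resulting gradient error $\e := \grady g(\tilde{\x}_{t-1}, \tilde{\y}_{t-1}) - \nabla\Psi_g(\tilde{\y}_{t-1})$ can be controlled by combining the $\ell$-smoothness of $g$ with the $\mu_\x$-strong convexity of $g(\cdot, \tilde{\y}_{t-1})$ and the AGD accuracy $g(\tilde{\x}_{t-1}, \tilde{\y}_{t-1}) - \min_{\x\in\XCal} g(\x, \tilde{\y}_{t-1}) \leq \tilde{\epsilon}$ guaranteed by Theorem~\ref{Theorem:AGD}; together these yield $\|\tilde{\x}_{t-1} - \x_g^\star(\tilde{\y}_{t-1})\|^2 \leq 2\tilde{\epsilon}/\mu_\x$ and hence $\|\e\|^2 \leq 2\kappax\ell\tilde{\epsilon}$.

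Next I would combine three standard one-step inequalities. First, projection optimality of $\y_t$ gives $(\y - \y_t)^\top \grady g(\tilde{\x}_{t-1}, \tilde{\y}_{t-1}) \leq 2\kappax\ell(\y - \y_t)^\top(\y_t - \tilde{\y}_{t-1})$ for every $\y\in\YCal$. Second, $\mu_\y$-strong concavity gives $\Psi_g(\y) \leq \Psi_g(\tilde{\y}_{t-1}) + (\y - \tilde{\y}_{t-1})^\top\nabla\Psi_g(\tilde{\y}_{t-1}) - (\mu_\y/2)\|\y - \tilde{\y}_{t-1}\|^2$. Third, $(2\kappax\ell)$-smoothness gives $\Psi_g(\tilde{\y}_{t-1}) \leq \Psi_g(\y_t) - (\y_t - \tilde{\y}_{t-1})^\top\nabla\Psi_g(\tilde{\y}_{t-1}) + \kappax\ell\|\y_t - \tilde{\y}_{t-1}\|^2$. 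Adding the second and third eliminates $\Psi_g(\tilde{\y}_{t-1})$; substituting $\nabla\Psi_g(\tilde{\y}_{t-1}) = \grady g(\tilde{\x}_{t-1}, \tilde{\y}_{t-1}) - \e$, applying the projection bound, and rewriting $\y - \y_t = (\y - \tilde{\y}_{t-1}) - (\y_t - \tilde{\y}_{t-1})$ (so that $2\kappax\ell(\y - \y_t)^\top(\y_t - \tilde{\y}_{t-1})$ decomposes into the claimed cross term minus $2\kappax\ell\|\y_t - \tilde{\y}_{t-1}\|^2$), I land at an expression that already contains $\Psi_g(\y_t)$, the cross term $2\kappax\ell(\y - \tilde{\y}_{t-1})^\top(\y_t - \tilde{\y}_{t-1})$, a coefficient $-\kappax\ell$ in front of $\|\y_t - \tilde{\y}_{t-1}\|^2$, a coefficient $-\mu_\y/2$ in front of $\|\y - \tilde{\y}_{t-1}\|^2$, and an as-yet-unabsorbed residual $-(\y - \y_t)^\top\e$.

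The only mildly delicate step is to absorb $-(\y - \y_t)^\top\e$ into the two quadratic terms so as to obtain the $3\kappax\kappay\tilde{\epsilon}$ slack. I would split $-(\y - \y_t)^\top\e = -(\y - \tilde{\y}_{t-1})^\top\e + (\y_t - \tilde{\y}_{t-1})^\top\e$ and apply Young's inequality to each piece with carefully tuned weights: pairing the first with weight $\mu_\y/4$ gives $-(\y - \tilde{\y}_{t-1})^\top\e \leq (\mu_\y/4)\|\y - \tilde{\y}_{t-1}\|^2 + \|\e\|^2/\mu_\y$, and pairing the second with weight $\kappax\ell/2$ gives $(\y_t - \tilde{\y}_{t-1})^\top\e \leq (\kappax\ell/2)\|\y_t - \tilde{\y}_{t-1}\|^2 + \|\e\|^2/(2\kappax\ell)$. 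These precisely degrade the two quadratic coefficients from $\mu_\y/2$ and $\kappax\ell$ down to $\mu_\y/4$ and $\kappax\ell/2$ as the lemma demands, and the leftover error is at most $\|\e\|^2/\mu_\y + \|\e\|^2/(2\kappax\ell) \leq 2\kappax\kappay\tilde{\epsilon} + \tilde{\epsilon} \leq 3\kappax\kappay\tilde{\epsilon}$ (using $\kappax\kappay \geq 1$), which matches the stated bound.
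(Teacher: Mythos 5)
Your proposal is correct and follows essentially the same route as the paper's proof: both use the projection optimality condition for $\y_t$ with stepsize $1/(2\kappax\ell)$, the $2\kappax\ell$-smoothness and $\mu_\y$-strong concavity of $\Psi_g$ from Lemma~\ref{Lemma:minimaxAG2-structure}, the bound $\|\tilde{\x}_{t-1} - \x_g^\star(\tilde{\y}_{t-1})\| \leq \sqrt{2\tilde{\epsilon}/\mu_\x}$ coming from the AGD tolerance and $\mu_\x$-strong convexity, and Young's inequality with exactly the weights $\mu_\y/4$ and $\kappax\ell/2$ to absorb the gradient-error term into the $3\kappax\kappay\tilde{\epsilon}$ slack. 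The only difference is organizational (you add the concavity and smoothness inequalities before inserting the projection bound, whereas the paper splits the variational inequality first), which does not change the argument.
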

\begin{proof}
For any $\y \in \YCal$, the update formula $\y_t \leftarrow \PY(\tilde{\y}_{t-1} + (1/2\kappax\ell)\grady g(\tilde{\x}_{t-1}, \tilde{\y}_{t-1}))$ implies that 
\begin{eqnarray*}
0 & \leq & (\y - \y_t)^\top(2\kappax\ell(\y_t - \tilde{\y}_{t-1}) - \grady g(\tilde{\x}_{t-1}, \tilde{\y}_{t-1})) \\
& = & (\y - \y_t)^\top(2\kappax\ell(\y_t - \tilde{\y}_{t-1}) - \grad \Psi_g(\tilde{\y}_{t-1})) + (\y - \y_t)^\top(\grad\Psi_g(\tilde{\y}_{t-1}) - \grady g(\tilde{\x}_{t-1}, \tilde{\y}_{t-1})). 
\end{eqnarray*}
Since $\grad\Psi_g(\tilde{\y}_{t-1}) = \grady g(\x_g^\star(\tilde{\y}_{t-1}), \tilde{\y}_{t-1})$, we have
\begin{equation*}
\|\grad\Psi_g(\tilde{\y}_{t-1}) - \grady g(\tilde{\x}_{t-1}, \tilde{\y}_{t-1})\| \ \leq \ \ell\|\x_g^\star(\tilde{\y}_{t-1}) - \tilde{\x}_{t-1}\|. 
\end{equation*}
Since $g(\cdot, \tilde{\y}_{t-1})$ is $\mu_\x$-strongly convex, we have
\begin{equation*}
\|\x_g^\star(\tilde{\y}_{t-1}) - \tilde{\x}_{t-1}\| \ \leq \ \sqrt{\frac{2(g(\tilde{\x}_{t-1}, \tilde{\y}_{t-1}) - g(\x_g^\star(\tilde{\y}_{t-1}), \tilde{\y}_{t-1}))}{\mu_\x}} \ \leq \  \sqrt{\frac{2\tilde{\epsilon}}{\mu_\x}}.  
\end{equation*}
Using Young's inequality, we have
\begin{equation*}
(\y - \y_t)^\top(\grad\Psi_g(\tilde{\y}_{t-1}) - \grady g(\tilde{\x}_{t-1}, \tilde{\y}_{t-1})) \ \leq \ \frac{\kappax\ell\|\y_t - \tilde{\y}_{t-1}\|^2}{2} + \frac{\mu_\y\|\y - \tilde{\y}_{t-1}\|^2}{4} + 3\kappax\kappay\tilde{\epsilon}. 
\end{equation*}
Since $\Psi_g$ is $2\kappax\ell$-smooth and $\mu_\y$-strongly concave, we have
\begin{eqnarray*}
(\y - \y_t)^\top(2\kappax\ell(\y_t - \tilde{\y}_{t-1}) - \grad \Psi_g(\tilde{\y}_{t-1})) & \leq & 2\kappax\ell(\y - \tilde{\y}_{t-1})^\top(\y_t - \tilde{\y}_{t-1}) + \Psi_g(\y_t) - \Psi_g(\y) \\
& & \hspace*{-4em} - \kappax\ell\|\y_t - \tilde{\y}_{t-1}\|^2 - \frac{\mu_\y\|\y - \tilde{\y}_{t-1}\|^2}{2}. 
\end{eqnarray*}
Putting these pieces together yields the desired inequality. 
\end{proof}
The remaining proof is based on the modification of Nesterov's techniques~\citep[Section~2.2.5]{Nesterov-2018-Lectures}. Indeed, we define the estimate sequence as follows, 
\begin{eqnarray*}
\Gamma_0(\y) & = & \Psi_g(\y_0) - \frac{\mu_\y\|\y - \y_0\|^2}{2}, \\
\Gamma_{t+1}(\y) & = & \frac{1}{4\sqrt{\kappax\kappay}}\left(\Psi_g(\y_{t+1}) + 2\kappax\ell(\y - \tilde{\y}_t)^\top(\y_{t+1} - \tilde{\y}_t) - \frac{\kappax\ell\|\y_{t+1} - \tilde{\y}_t\|^2}{2} \right. \\
& & \left. - \frac{\mu_\y\|\y - \tilde{\y}_t\|^2}{4} - 12(\kappax\kappay)^{3/2}\tilde{\epsilon}\right) + \left(1 - \frac{1}{4\sqrt{\kappax\kappay}}\right)\Gamma_t(\y) \quad \text{for all } t \geq 0.  
\end{eqnarray*}
We apply the inductive argument to prove,  
\begin{equation}\label{maximin-AG2-inequality-seventh}
\max_{\y \in \br^n} \ \Gamma_t(\y) \ \leq \ \Psi_g(\y_t) \ \quad \text{for all } t \geq 0.  
\end{equation}
Eq.~\eqref{maximin-AG2-inequality-seventh} holds trivially when $t = 0$. In what follows, we show that Eq.~\eqref{maximin-AG2-inequality-seventh} holds true when $t=T$ if Eq.~\eqref{maximin-AG2-inequality-seventh} holds true for all $t \leq T-1$. Let $\sv_t = \argmax_{\y \in \br^n} \Gamma_t(\y)$ and $\Gamma_t^\star = \max_{\y \in \br^n} \Gamma_t(\y)$, we have the canonical form $\Gamma_t(\y) = \Gamma_t^\star - (\mu_\y/4)\|\y - \sv_t\|^2$. The following recursive rules hold for $\sv_t$ and $\Gamma_t^\star$: 
\begin{eqnarray*}
\sv_{t+1} & = & \left(1 - \frac{1}{4\sqrt{\kappax\kappay}}\right)\sv_t + \frac{\tilde{\y}_t}{4\sqrt{\kappax\kappay}} + \sqrt{\kappax\kappay}(\y_{t+1} - \tilde{\y}_t), \\
\Gamma_{t+1}^\star & = & \left(1 - \frac{1}{4\sqrt{\kappax\kappay}}\right)\Gamma_t^\star + \frac{1}{4\sqrt{\kappax\kappay}}\left(\Psi_g(\y_{t+1}) - 12(\kappax\kappay)^{3/2}\tilde{\epsilon}\right) - \left(\frac{\ell}{8}\sqrt{\frac{\kappax}{\kappay}} - \frac{\kappax\ell}{4}\right)\|\y_{t+1} - \tilde{\y}_t\|^2 \\
& & - \frac{1}{4\sqrt{\kappax\kappay}}\left(1 - \frac{1}{4\sqrt{\kappax\kappay}}\right)\left(\frac{\mu_\y\|\tilde{\y}_t - \sv_t\|^2}{4} - 2\kappax\ell(\sv_t - \tilde{\y}_t)^\top(\y_{t+1} - \tilde{\y}_t)\right). 
\end{eqnarray*}
It follows from the recursive rule for $\Gamma_t$ and its canonical form that 
\begin{equation*}
\grad\Gamma_{t+1}(\y) \ = \ -\left(1 - \frac{1}{4\sqrt{\kappax\kappay}}\right)\frac{\mu_\y(\y - \sv_t)}{2} + \frac{1}{4\sqrt{\kappax\kappay}}\left(2\kappax\ell(\y_{t+1} - \tilde{\y}_t) - \frac{\mu_\y(\y - \tilde{\y}_t)}{2}\right). 
\end{equation*}
The recursive rule for $\sv_t$ can be achieved by solving $\nabla\Gamma_{t+1}(\sv_{t+1}) = 0$. Then we have
\begin{eqnarray*}
& & \Gamma_{t+1}^\star \ = \ \Gamma_{t+1}(\sv_{t+1}) \\ 
& = & \left(1 - \frac{1}{4\sqrt{\kappax\kappay}}\right)\Gamma_t^\star - \left(1 - \frac{1}{4\sqrt{\kappax\kappay}}\right)\frac{\mu_\y\|\sv_{t+1} - \sv_t\|^2}{4} + \frac{1}{4\sqrt{\kappax\kappay}}\left(\Psi_g(\y_{t+1}) - 12(\kappax\kappay)^{3/2}\tilde{\epsilon} \right. \\
& & \left. - \frac{\kappax\ell\|\y_{t+1} - \tilde{\y}_t\|^2}{2}\right) + \frac{1}{4\sqrt{\kappax\kappay}}\left(2\kappax\ell(\sv_{t+1} - \tilde{\y}_t)^\top(\y_{t+1} - \tilde{\y}_t) - \frac{\mu_\y\|\sv_{t+1} - \tilde{\y}_t\|^2}{4}\right). 
\end{eqnarray*}
Then we conclude the recursive rule for $\Gamma_t^\star$ by plugging the recursive rule for $\sv_k$ into the above equality. By the induction, Eq.~\eqref{maximin-AG2-inequality-seventh} holds true when $t = T-1$ which implies 
\begin{eqnarray*}
\Gamma_T^\star & \leq & \left(1 - \frac{1}{4\sqrt{\kappax\kappay}}\right)\Psi_g(\y_{T-1}) + \frac{1}{4\sqrt{\kappax\kappay}}\left(\Psi_g(\y_T) - 12(\kappax\kappay)^{3/2}\tilde{\epsilon}\right) \\
& & - \left(\frac{\ell}{8}\sqrt{\frac{\kappax}{\kappay}} - \frac{\kappax\ell}{4}\right)\|\y_T - \tilde{\y}_{T-1}\|^2 -\frac{1}{4\sqrt{\kappax\kappay}}\left(1 - \frac{1}{4\sqrt{\kappax\kappay}}\right)\left(\frac{\mu_\y\|\tilde{\y}_{T-1} - \sv_{T-1}\|^2}{2} \right. \\
& & \left. - 2\kappax\ell(\sv_{T-1} - \tilde{\y}_{T-1})^\top(\y_T - \tilde{\y}_{T-1})\right).
\end{eqnarray*}
Applying Lemma~\ref{lemma:maximin-AG2} with $t = T$ and $\y = \y_{T-1}$ further implies that 
\begin{eqnarray*}
\Psi_g(\y_{T-1}) & \leq & 2\kappax\ell(\y_{T-1} - \tilde{\y}_{T-1})^\top(\y_T - \tilde{\y}_{T-1}) + \Psi(\y_T) - \frac{\kappax\ell\|\y_T - \tilde{\y}_{T-1}\|^2}{2} \\ 
& & - \frac{\mu_\y\|\y_{T-1} - \tilde{\y}_{T-1}\|^2}{2} + 3\kappax\kappay\tilde{\epsilon}. 
\end{eqnarray*}
Putting these pieces together yields that 
\begin{eqnarray*}
\Gamma_T^\star & \leq & \Psi_g(\y_T) + \left(1 - \frac{1}{4\sqrt{\kappax\kappay}}\right) 2\kappax\ell(\y_T - \tilde{\y}_{T-1})^\top\left[(\y_{T-1} - \tilde{\y}_{T-1}) + \frac{1}{4\sqrt{\kappax\kappay}}(\sv_{T-1} - \tilde{\y}_{T-1}) \right]. 
\end{eqnarray*}
Using the update formula $\tilde{\y}_t = \y_t + \frac{4\sqrt{\kappax\kappay}-1}{4\sqrt{\kappax\kappay}+1}(\y_t - \y_{t-1})$ and the recursive rule for $\sv_t$ with the inductive argument, it is straightforward that $(\y_t - \tilde{\y}_t) + \frac{1}{4\sqrt{\kappax\kappay}}(\sv_t -\tilde{\y}_t) = 0$ for all $t \geq 0$. This implies that $\Gamma_T^\star \leq \Psi_g(\y_T)$. Therefore, we conclude that Eq.~\eqref{maximin-AG2-inequality-seventh} holds true for all $t \geq 0$. 

On the other hand, Lemma~\ref{lemma:maximin-AG2} and the update formula for $\Gamma_t$ implies that 
\begin{equation*}
\Gamma_{t+1}(\y) \ \geq \ \frac{1}{4\sqrt{\kappax\kappay}}\left(\Psi_g(\y) - 12(\kappax\kappay)^{3/2}\tilde{\epsilon} - 3\kappax\kappay\tilde{\epsilon}\right) + \left(1 - \frac{1}{4\sqrt{\kappax\kappay}}\right)\Gamma_t(\y). 
\end{equation*}
Since $\kappax, \kappay \geq 1$, we have
\begin{equation*}
\Psi_g(\y) - \Gamma_{t+1}(\y) \ \leq \ \left(1 - \frac{1}{4\sqrt{\kappax\kappay}}\right)(\Psi_g(\y) - \Gamma_t(\y)) + 6\kappax\kappay\tilde{\epsilon}. 
\end{equation*}
Repeating the above inequality yields that 
\begin{equation*}
\Psi_g(\y) - \Gamma_T(\y) \ \leq \ \left(1 - \frac{1}{4\sqrt{\kappax\kappay}}\right)^T(\Psi_g(\y) - \Gamma_0(\y)) + 24(\kappax\kappay)^{3/2}\tilde{\epsilon}. 
\end{equation*}
Therefore, we conclude that 
\begin{equation*}
\max_{\y \in \YCal} \Psi_g(\y) - \Psi_g(\y_T) \ \leq \ \left(1 - \frac{1}{4\sqrt{\kappax\kappay}}\right)^T\frac{(2\kappax\ell + \bar{\mu})\Dy^2}{2} + 24(\kappax\kappay)^{3/2}\tilde{\epsilon}. 
\end{equation*}
Since the tolerance $\tilde{\epsilon} \leq \frac{\epsilon}{4477676(\kappax\kappay)^{11/2}}$, we conclude that the iteration complexity Algorithm~\ref{alg:Proxy} to guarantee Eq.~\eqref{maximin-AG2-inequality-sixth} is bounded by $O(\sqrt{\kappax\kappay}\log(\ell\Dy^2/\epsilon))$. 

Now it suffices to establish the gradient complexity of the two AGD subroutines at each iteration. In particular, we use the gradient complexity of the AGD subroutine to guarantee that $g(\hat{\x}) \leq \min_\XCal g(\x) + \epsilon$ is bounded by 
\begin{equation*}
O\left(1 + \sqrt{\kappa}\log\left(\frac{\kappa^3\ell \|\x_0 - \x^\star\|^2}{\epsilon}\right)\right),
\end{equation*}
where $\kappa$ is the condition number of $g$ and $\x^\star$ is the global optimum of $g$ over $\XCal$. Since $\YCal$ is a convex and bounded set, $\{\y_t\}_{t \geq 0}$ is a bounded sequence. Hence $\{\tilde{\y}_t\}_{t \geq 0}$ is also a bounded sequence. Since $\x_g^\star(\cdot)$ is $\kappax$-Lipschitz (cf.\ Lemma~\ref{Lemma:minimaxAG2-structure}), the sequences $\{\x_g^\star(\tilde{\y}_t)\}_{t \geq 0}$ and $\{\x_g^\star(\y_t)\}_{t \geq 0}$ are bounded. Thus, we have
\begin{equation*}
\|\x_0 - \x_g^\star(\y_t)\|^2 = \|\x_0 - \x_g^\star(\tilde{\y}_t)\|^2 = O(\|\x_0 - \x_g^\star(\y_0)\|^2 + \kappax^2\Dy^2). 
\end{equation*}
Putting these pieces together yields that the gradient complexity of every AGD subroutines at each iteration is bounded by $O(\sqrt{\kappax}\log((\kappax^3\ell(\|\x_0 - \x_g^\star(\y_0)\|^2 + \kappax^2\Dy^2)/\tilde{\epsilon}))$. Therefore, the gradient complexity of Algorithm~\ref{alg:Proxy} to guarantee Eq.~\eqref{maximin-AG2-inequality-sixth} is bounded by 
\begin{equation*}
O\left(\kappax\sqrt{\kappay}\cdot \log^2\left(\frac{(\kappax+\kappay)\ell(\tilde{D}_\x^2 + \Dy^2)}{\epsilon}\right)\right),
\end{equation*}
where $\kappa_\x = \ell/\mu_\x$ and $\kappa_\y = \ell/\mu_\y$ are condition numbers, $\tilde{D}_\x = \|\x_0 - \x_g^\star(\y_0)\|$ is the initial distance where $\x_g^\star(\y_0) = \argmin_{\x \in \XCal} g(\x, \y_0)$ and $\Dy > 0$ is the diameter of the constraint set $\YCal$.


\section{Proofs for Convex-Concave Settings}
In this section, we present proofs for all results in Section \ref{sec:result_convex_concave}.

\subsection{Proof of Theorem~\ref{thm:scsc}}
We first show that there exists $T>0$ such that $(\hat{\x}, \hat{\y}) = \textsc{\AGCC}(f, \x_0, \y_0, \ell, \mu_\x, \mu_\y, \epsilon, T)$ is an $\epsilon$-saddle point. Then we estimate the total number of gradient evaluations required to output an $\epsilon$-approximate saddle point. 

First, we note that $\textsc{\AGCC}$ in Algorithm~\ref{alg:Minimax_APG} can be interpreted as an inexact accelerated proximal point algorithm $\textsc{Inexact-APPA}$ with the inner loop solver $\textsc{\AGPROX}$ and $\textsc{AGD}$. Using Theorem~\ref{Theorem:AGD} and Theorem~\ref{Theorem:inexact-APPA}, the point $(\hat{\x}, \hat{\y})$ satisfies 
\begin{equation*}
\max_{\y \in \YCal} f(\hat{\x}, \y) - \min_{\x \in \XCal} \max_{\y \in \YCal} f(\x, \y) \ \leq \ \left(1 - \frac{1}{6\sqrt{\kappax}}\right)^T\left(\Phi(\x_0) - \Phi(\x^\star) + \frac{\mu_\x\|\x^\star - \x_0\|^2}{4}\right) + 42\kappax^{3/2}\delta. 
\end{equation*}
and $\hat{\y} \leftarrow \proj_\YCal\left(\tilde{\y} + (1/2\kappa_\x\ell)\grady f(\hat{\x}, \tilde{\y})\right)$ where $\tilde{\y} \in \YCal$ satisfies that 
\begin{equation*}
\max_{\y \in \YCal} f(\hat{\x}, \y) - f(\hat{\x}, \tilde{\y}) \ \leq \ \tilde{\epsilon}. 
\end{equation*}
We let $\Phi(\cdot) = \max_{\y \in \YCal} f(\cdot, \y)$ and note that $\Phi$ is $\mu_\x$-strongly convex function. Since $f$ is $\mu_\x$-strongly-convex-$\mu_\y$-strongly-concave, the Nash equilibrium $(\x^\star, \y^\star)$ is unique and $\x^\star = \argmin_{\x \in \XCal} \Phi(\x)$. Therefore, we have
\begin{equation*}
\|\hat{\x} - \x^\star\|^2 \ \leq \ \frac{2}{\mu_\x}\left(\max_{\y \in \YCal} f(\hat{\x}, \y) - \min_{\x \in \XCal} \max_{\y \in \YCal} f(\x, \y)\right). 
\end{equation*}
Since $f(\hat{\x}, \cdot)$ is $\mu_\y$-strongly concave,~\citet[Theorem~2.1.5]{Nesterov-2018-Lectures} implies that 
\begin{equation*}
\|\tilde{\y} - \y^\star(\hat{\x})\|^2 \ \leq \ \frac{2}{\mu_\y}\left(\max_{\y \in \YCal} \ f(\hat{\x}, \y) - f(\hat{\x}, \tilde{\y})\right) \ \leq \ \frac{2\tilde{\epsilon}}{\mu_\y}. 
\end{equation*}
Since $\y^\star(\cdot) = \argmax_{\y \in \YCal} f(\cdot, \y)$ is $\kappa_\y$-Lipschitz (cf. Lemma~\ref{Lemma:minimaxAG2-structure}), $\|\y^\star - \y^\star(\hat{\x})\|^2 = \|\y^\star(\x^\star) - \y^\star(\hat{\x})\|^2 \leq \kappa_\y^2\|\hat{\x} - \x^\star\|^2$. Thus, we have
\begin{equation*}
\|\tilde{\y} - \y^\star\|^2 \ \leq \ 2\kappa_\y^2\|\hat{\x} - \x^\star\|^2 + \frac{4\tilde{\epsilon}}{\mu_\y}. 
\end{equation*}
Let $\Psi(\cdot) = \min_{\x \in \XCal} f(\x, \cdot)$. By the definition of $\hat{\y}$, the following inequality holds true for any $\y \in \YCal$, 
\begin{eqnarray*}
0 & \leq & (\y - \hat{\y})^\top(2\kappax\ell(\hat{\y} - \tilde{\y}) - \grady f(\hat{\x}, \tilde{\y})) \\
& = & (\y - \hat{\y})^\top(2\kappax\ell(\hat{\y} - \tilde{\y}) - \grad \Psi(\tilde{\y})) + (\y - \hat{\y})^\top(\grad\Psi(\tilde{\y}) - \grady f(\hat{\x}, \tilde{\y})). 
\end{eqnarray*}
Since $\grad\Psi(\tilde{\y}) = \grady f(\x^\star(\tilde{\y}), \tilde{\y})$, we have $\|\grad\Psi(\tilde{\y}) - \grady f(\hat{\x}, \tilde{\y})\| \leq \ell\|\x^\star(\tilde{\y})-\hat{\x}\|$. Using the Young's inequality, we have
\begin{equation*}
(\y - \hat{\y})^\top(\grad\Psi(\tilde{\y}) - \grady f(\hat{\x}, \tilde{\y})) \ \leq \ \frac{\kappax\ell\|\hat{\y} - \tilde{\y}\|^2}{2} + \frac{\kappax\ell\|\y - \tilde{\y}\|^2}{2} + \mu_\x\|\x^\star(\tilde{\y}) - \hat{\x}\|^2. 
\end{equation*}
Since $\Psi$ is $\mu_\y$-strongly concave and $2\kappax\ell$-smooth, we have
\begin{eqnarray*}
(\y - \hat{\y})^\top(2\kappax\ell(\hat{\y} - \tilde{\y}) - \grad \Psi(\tilde{\y})) & \leq & 2\kappax\ell(\y - \tilde{\y})^\top(\hat{\y} - \tilde{\y}) + \Psi(\hat{\y}) - \Psi(\y) \\
& & \hspace*{-4em} - \kappax\ell\|\hat{\y} - \tilde{\y}\|^2 - \frac{\mu_\x\|\y - \tilde{\y}\|^2}{2}. 
\end{eqnarray*}
Using the Young's inequality, we have $(\y - \tilde{\y})^\top(\hat{\y} - \tilde{\y}) \leq \|\y - \tilde{\y}\|^2 + (1/4)\|\hat{\y} - \tilde{\y}\|^2$. Putting these pieces together with $\y = \y^\star$ yields that 
\begin{eqnarray*}
\min_{\x \in \XCal} \max_{\y \in \YCal} f(\x, \y) - \min_{\x \in \XCal} f(\x, \hat{\y}) & = & \Psi(\y^\star) - \Psi(\hat{\y}) \ \leq \ 3\kappax\ell\|\tilde{\y} - \y^\star\|^2 + \mu_\x\|\x^\star(\tilde{\y}) - \hat{\x}\|^2 \\
& & \hspace*{-10em} \leq \ 3\kappax\ell\|\tilde{\y} - \y^\star\|^2 + 2\mu_\x\|\x^\star(\tilde{\y}) - \x^\star(\y^\star)\|^2 + 2\mu_\x\|\x^\star - \hat{\x}\|^2 \\
& & \hspace*{-10em} \leq \ 5\kappax\ell\|\tilde{\y} - \y^\star\|^2 + 2\mu_\x\|\x^\star - \hat{\x}\|^2. 
\end{eqnarray*}
Therefore, we conclude that 
\begin{equation*}
\max_{\y \in \YCal} f(\hat{\x}, \y) - \min_{\x \in \XCal} f(\x, \hat{\y}) \ \leq \ 20\kappax\kappay\tilde{\epsilon} + (20\kappax^2\kappay^2 + 5)\left(\max_{\y \in \YCal} f(\hat{\x}, \y) - \min_{\x \in \XCal} \max_{\y \in \YCal} f(\x, \y)\right). 
\end{equation*}
Note that $\tilde{\epsilon} \leq \epsilon/80\kappax\kappay$ and $\delta \leq \epsilon/4200\kappax^{7/2}\kappay^2$. This together with the above inequality implies that
\begin{equation*}
\max_{\y \in \YCal} f(\hat{\x}, \y) - \min_{\x \in \XCal} f(\x, \hat{\y}) \ \leq \ \frac{3\epsilon}{4} + (20\kappax^2\kappay^2 + 5)\left(1 - \frac{1}{6\sqrt{\kappax}}\right)^T\left(\Phi(\x_0) - \Phi(\x^\star) + \frac{\mu_\x\|\x^\star - \x_0\|^2}{4}\right). 
\end{equation*}
To this end, there exists an absolute constant $c>0$ such that $\max_{\y \in \YCal} f(\hat{\x}, \y) - \min_{\x \in \XCal} f(\x, \hat{\y}) \leq \epsilon$ if the maximum number of iterations $T \geq c\sqrt{\kappax}\log(\kappax^2\kappay^2\ell\|\x^\star - \x_0\|^2/\epsilon)$. This implies that the total number of iterations is bounded by 
\begin{equation*}
O\left(\sqrt{\kappax}\log\left(\frac{\kappax^2\kappay^2\ell\|\x^\star - \x_0\|^2}{\epsilon}\right)\right). 
\end{equation*}
Furthermore, we call the solver $\textsc{\AGPROX}$ at each iteration. Using Theorem~\ref{Theorem:maximin-AG2} and $\delta = \epsilon/(10\kappax\kappay)^4$, the number of gradient evaluations at each iteration is bounded by  
\begin{equation*}
O\left(\sqrt{\kappay}\log\left(\frac{\kappax^{7/2}\kappay^3\ell(\tilde{D}_\x^2 + \Dy^2)}{\epsilon}\right)\log\left(\frac{\kappax^4\kappay^4\ell\Dy^2}{\epsilon}\right)\right). 
\end{equation*}
Recalling $D = \max\{\Dx, \Dy\} < +\infty$, we conclude that the total number of gradient evaluations is bounded by 
\begin{equation*}
O\left(\sqrt{\kappax\kappay}\log^3\left(\frac{\kappax\kappay\ell D^2}{\epsilon}\right)\right). 
\end{equation*}
This completes the proof. 

\subsection{Proof of Corollary~\ref{cor:scc}}
We first show that $(\hat{\x}, \hat{\y}) = \textsc{\AGCC}(f_{\epsilon, \y}, \x_0, \y_0, \ell, \mu_\x, \epsilon/(4D^2_{\y}), \epsilon/2, T)$ is an $\epsilon$-saddle point. Then we estimate the number of gradient evaluations to output an $\epsilon$-saddle point using Theorem~\ref{thm:scsc}. By the definition of $f_\epsilon$, the output $(\hat{\x}, \hat{\y})$ satisfies
\begin{equation*}
\max_{\y \in \YCal} \ \left\{f(\hat{\x}, \y) - \frac{\epsilon\|\y - \y_0\|^2}{4\Dy^2}\right\} - \min_{\x \in \XCal} \ \left\{f(\x, \hat{\y}) - \frac{\epsilon\|\hat{\y} - \y_0\|^2}{4\Dy^2}\right\} \ \leq \ \frac{\epsilon}{2}. 
\end{equation*}
Since the function $f(\x, \cdot)$ is concave for each $\x \in \XCal$, we have
\begin{equation*}
\max_{\y \in \YCal} \ \left\{f(\hat{\x}, \y) - \frac{\epsilon\|\y - \y_0\|^2}{4\Dy^2}\right\} \ \geq \ \max_{\y \in \YCal} \ f(\x_{T+1}, \y) - \frac{\epsilon}{4}. 
\end{equation*}
On the other hand, we have
\begin{equation*}
\min_{\x \in \XCal} \ \left\{f(\x, \hat{\y}) - \frac{\epsilon\|\hat{\y} - \y_0\|^2}{4\Dy^2}\right\} \ \leq \ \min_{\x \in \XCal} \ f(\x, \hat{\y}) + \frac{\epsilon}{4}. 
\end{equation*}
Putting these pieces together yields that $\max_{\y \in \YCal} f(\hat{\x}, \y) - \min_{\x \in \XCal} f(\x, \hat{\y}) \leq \epsilon$. 

Furthermore, letting $\kappay = 2\ell\Dy^2/\epsilon$ in the gradient complexity bound presented in Theorem~\ref{thm:scsc}, we conclude that the total number of gradient evaluations is bounded by 
\begin{equation*}
O\left(\sqrt{\frac{\kappax\ell}{\epsilon}}\Dy\log^3\left(\frac{\kappax\ell D^2}{\epsilon}\right)\right). 
\end{equation*}
This completes the proof. 

\subsection{Proof of Corollary~\ref{cor:cc}}
We first show that $(\hat{\x}, \hat{\y}) = \textsc{\AGCC}(f_{\epsilon}, \x_0, \y_0, \ell, \epsilon/(4\Dx^2), \epsilon/(4\Dy^2), \epsilon/2, T)$ is an $\epsilon$-saddle point. Then we estimate the number of gradient evaluations to output an $\epsilon$-saddle point using Theorem~\ref{thm:scsc}. By the definition of $f_\epsilon$, the output $(\hat{\x}, \hat{\y})$ satisfies
\begin{equation*}
\max_{\y \in \YCal} \ \left\{f(\hat{\x}, \y) + \frac{\epsilon\|\hat{\x} - \x_0\|^2}{8\Dx^2} - \frac{\epsilon\|\y - \y_0\|^2}{8\Dy^2}\right\} - \min_{\x \in \XCal} \ \left\{f(\x, \hat{\y}) + \frac{\epsilon\|\x - \x_0\|^2}{8\Dx^2} - \frac{\epsilon\|\hat{\y} - \y_0\|^2}{8\Dy^2}\right\} \ \leq \ \frac{\epsilon}{2}. 
\end{equation*}
Since the function $f(\x, \cdot)$ is concave for each $\x \in \XCal$, we have
\begin{equation*}
\max_{\y \in \YCal} \ \left\{f(\hat{\x}, \y) + \frac{\epsilon\|\hat{\x} - \x_0\|^2}{8\Dx^2} - \frac{\epsilon\|\y - \y_0\|^2}{8\Dy^2}\right\} \ \geq \ \max_{\y \in \YCal} \ f(\hat{\x}, \y) - \frac{\epsilon}{4}. 
\end{equation*}
On the other hand, 
\begin{equation*}
\min_{\x \in \XCal} \ \left\{f(\x, \hat{\y}) + \frac{\epsilon\|\x - \x_0\|^2}{8\Dx^2} - \frac{\epsilon\|\hat{\y} - \y_0\|^2}{8\Dy^2}\right\} \ \leq \ \min_{\x \in \XCal} \ f(\x, \hat{\y}) + \frac{\epsilon}{4}. 
\end{equation*}
Putting these pieces together yields that $\max_{\y \in \YCal} f(\hat{\x}, \y) - \min_{\x \in \XCal} f(\x, \hat{\y}) \leq \epsilon$. 

Furthermore, letting $\kappax = 4\ell\Dx^2/\epsilon$ and $\kappay = 2\ell\Dy^2/\epsilon$ in the gradient complexity bound presented in Theorem~\ref{thm:scsc}, we conclude that the total number of gradient evaluations is bounded by 
\begin{equation*}
O\left(\frac{\ell\Dx\Dy}{\epsilon}\log^3\left(\frac{\ell D^2}{\epsilon}\right)\right). 
\end{equation*}
This completes the proof. 


\section{Proofs for Nonconvex-Concave Settings}
In this section, we present proofs for all results in Section \ref{sec:result_nonconvex_concave} and Section \ref{sec:app-nonconvex}

\subsection{Proof of Theorem~\ref{thm:nsc}}
Using the definition of $g_t$, we have
\begin{equation*}
\max_{\y \in \YCal} f(\x_{t+1}, \y) + \ell\|\x_{t+1} - \x_t\|^2 \ \leq \ \min_{\x \in \XCal} \left\{\max_{\y \in \YCal} f(\x, \y) + \ell\|\x - \x_t\|^2\right\} + \delta. 
\end{equation*}
This implies that 
\begin{equation*}
\Phi(\x_{t+1}) + \ell\|\x_{t+1} - \x_t\|^2 \ \leq \ \min_{\x \in \XCal} \left\{\Phi(\x) + \ell\|\x - \x_t\|^2\right\} + \delta \ \leq \ \Phi(\x_t) + \delta.  
\end{equation*}
Equivalently, we have
\begin{equation}\label{Theorem:nsc-inequality-first}
\|\x_{t+1} - \x_t\|^2 \ \leq \ \frac{\Phi(\x_t) - \Phi(\x_{t+1}) + \delta}{\ell}. 
\end{equation}
Note that the function $\Phi(\cdot) + \ell\|\cdot - \x_t\|^2$ is $\ell$-strongly convex and its minimizer $\x_t^*$ is well defined and unique~\citep{Davis-2019-Stochastic}. Since the function $\Phi(\cdot) + \ell\|\cdot - \x_t\|^2$ is $\ell$-strongly convex, we derive from~\citet[Theorem~2.1.5]{Nesterov-2018-Lectures} that
\begin{equation}\label{Theorem:nsc-inequality-second}
\|\x_{t+1} - \x_t^*\|^2 \ \leq \ \frac{2}{\ell}\left(\Phi(\x_{t+1}) + \ell\|\x_{t+1} - \x_t\|^2 - \min_{\x \in \br^m} \left\{\Phi(\x) + \ell\|\x - \x_t\|^2\right\}\right) \ \leq \ \frac{2\delta}{\ell}. 
\end{equation}
Since $\Phi$ is differentiable, we have 
\begin{equation*}
\left\|\x_t^* - \PX\left(\x_t^* - \frac{\grad\Phi(\x_t^*) + 2\ell(\x_t^* - \x_t)}{\ell}\right)\right\| \ = \ 0. 
\end{equation*}
Therefore, we have
\begin{equation*}
\left\|\x_{t+1} - \PX\left(\x_{t+1} - \frac{\grad\Phi(\x_{t+1})}{\ell}\right)\right\| \ \leq \ 2\|\x_{t+1} - \x_t^*\| + 2\|\x_t - \x_t^*\| + \frac{\|\nabla\Phi(\x_t^*) - \grad\Phi(\x_{t+1})\|}{\ell}. 
\end{equation*}
Since $\Phi(\cdot)$ is $2\kappay\ell$-smooth, we have $\|\nabla \Phi(\x_{t+1}) - \nabla \Phi(\x_t^*)\| \leq 2\kappay\ell\|\x_{t+1} - \x_t^*\|$. Putting these pieces together yields that 
\begin{eqnarray}\label{Theorem:nsc-inequality-third}
\left\|\x_{t+1} - \PX\left(\x_{t+1} - \frac{\grad\Phi(\x_{t+1})}{\ell}\right)\right\| & \leq & (2\kappay + 2)\|\x_{t+1} - \x_t^*\| + 2\|\x_t - \x_t^*\| \\
& \overset{\kappay \geq 1}{\leq} & 6\kappay\|\x_{t+1} - \x_t^*\| + 2\|\x_{t+1} - \x_t\|. \nonumber
\end{eqnarray}
Putting Eq.~\eqref{Theorem:nsc-inequality-first}, Eq.~\eqref{Theorem:nsc-inequality-second} and Eq.~\eqref{Theorem:nsc-inequality-third} together with the Cauchy-Schwarz inequality yields 
\begin{eqnarray*}
(\ell\left\|\x_{t+1} - \PX\left(\x_{t+1} - (1/\ell)\grad\Phi(\x_{t+1})\right)\right\|)^2 & \leq & 72\kappay^2\ell^2\|\x_{t+1} - \x_t^*\|^2 + 8\ell^2\|\x_{t+1} - \x_t\|^2 \\ 
& \leq & 8\ell\left(\Phi(\x_t) - \Phi(\x_{t+1}) + \delta\right) + 144\kappay^2\ell\delta. 
\end{eqnarray*}
Summing up the above inequality over $t = 0, 1, \ldots, T-1$ and dividing it by $T$ yields that 
\begin{eqnarray*}
\frac{1}{T}\left(\sum_{t=0}^{T-1} (\ell\left\|\x_{t+1} - \PX\left(\x_{t+1} - (1/\ell)\grad\Phi(\x_{t+1})\right)\right\|)^2\right) & \leq & \frac{8\ell(\Phi(\x_0) - \Phi(\x_T))}{T} + 8\ell\delta + 144\kappa_\y^2\ell\delta \\ 
& \overset{\kappay \geq 1}{\leq} & \frac{8\ell(\Phi(\x_0) - \Phi(\x_T))}{T} + 152\kappay^2\ell\delta. 
\end{eqnarray*}
Since $\hat{\x} = \x_s$ is uniformly chosen from $\{\x_s\}_{1 \leq s \leq T}$ and $\delta \leq \epsilon^2/(10\kappay)^4\ell$, we have
\begin{eqnarray*}
\EE\left[(\ell\left\|\hat{\x} - \PX\left(\hat{\x} - (1/\ell)\grad\Phi(\hat{\x})\right)\right\|)^2\right] & = & \frac{1}{T}\left(\sum_{t=0}^{T-1} (\ell\left\|\x_{t+1} - \PX\left(\x_{t+1} - (1/\ell)\grad\Phi(\x_{t+1})\right)\right\|)^2\right) \\ 
& \leq & \frac{8\ell(\Phi(\x_0) - \Phi(\x_T))}{T} + 152\kappay^2\ell\delta \ \leq \ \frac{8\ell\Delta_\Phi}{T} + \frac{\epsilon^2}{8}. 
\end{eqnarray*}
Using the Markov inequality, we conclude that there exists $T > c\ell\Delta_\Phi\epsilon^{-2}$, where the output $\hat{\x}$ will satisfy $\ell\left\|\hat{\x} - \PX\left(\hat{\x} - (1/\ell)\grad\Phi(\hat{\x})\right)\right\| \leq \epsilon/2$ with probability at least $2/3$. 

For simplicity, we denote $\hat{\y}^+ = \PY[\hat{\y} + (1/\ell)\grady f(\hat{\x}, \hat{\y})]$. Since $\hat{\y}$ is obtained by running AGD on $-f(\hat{\x}, \cdot)$ to optimal with tolerance $\delta \leq \epsilon^2/(10\kappay)^4\ell$, and $f(\hat{\x}, \cdot)$ is $\mu_\y$-concave function, we know that $\delta$-optimality guarantees:
\begin{equation*}
\ell\|\hat{\y}^+ - \hat{\y}\| \leq \epsilon, \quad \|\hat{\y}^+ - \y^\star(\hat{\x})\| \leq \frac{\epsilon}{2\ell}. 
\end{equation*}
Putting these pieces together yields that 
\begin{eqnarray*}
\ell\left\|\hat{\x} - \PX\left(\hat{\x} - (1/\ell)\gradx f(\hat{\x}, \hat{\y}^+)\right)\right\| & \leq & \ell\left\|\hat{\x} - \PX\left(\hat{\x} - (1/\ell)\grad\Phi(\hat{\x})\right)\right\| + \|\grad\Phi(\hat{\x}) - \gradx f(\hat{\x}, \hat{\y}^+)\| \\ 
& \leq & \ell\left\|\hat{\x} - \PX\left(\hat{\x} - (1/\ell)\grad\Phi(\hat{\x})\right)\right\| + \ell\|\hat{\y}^+ - \y^\star(\hat{\x})\| \\
& \leq & \epsilon. 
\end{eqnarray*}
This implies that $(\hat{\x}, \hat{\y})$ is an $\epsilon$-stationary point. Furthermore, we call the solver $\textsc{\AGPROX}$ at each iteration. Using Theorem~\ref{Theorem:maximin-AG2} and $\delta \leq \epsilon^2/(10\kappay)^4\ell$, the number of gradient evaluations at each iteration is bounded by  
\begin{equation*}
O\left(\sqrt{\kappay}\log\left(\frac{\kappay^5\ell^2(\tilde{D}_\x^2 + \Dy^2)}{\epsilon^2}\right)\log\left(\frac{\kappay^4\ell^2\Dy^2}{\epsilon^2}\right)\right). 
\end{equation*}
Therefore, we conclude that the total number of gradient evaluations is bounded by 
\begin{equation*}
O\left(\frac{\ell\Delta_\Phi}{\epsilon^2}\cdot\sqrt{\kappay}\log^2\left(\frac{\kappay\ell (\tilde{D}_\x^2 + \Dy^2)}{\epsilon}\right)\right). 
\end{equation*}
This completes the proof.  
\subsection{Proof of Corollary~\ref{cor:nc}}
Recall that the function $\tilde{f}_\epsilon$ is defined by 
\begin{equation*}
\tilde{f}_\epsilon(\x, \y) \ = \ f(\x, \y) - \frac{\epsilon\|\y - \y_0\|^2}{4\Dy}. 
\end{equation*}
This implies that the following statement holds for all $(\x, \y) \in \XCal \times \YCal$ that 
\begin{eqnarray*}
\gradx f(\x, \y) - \gradx\tilde{f}_\epsilon(\x, \y) & = & 0, \\
\|\grady f(\x, \y) - \grady\tilde{f}_\epsilon(\x, \y)\| & \leq & \frac{\epsilon}{2}. 
\end{eqnarray*}
Since $(\hat{\x}, \hat{\y}) = \textsc{\AGNC}(\tilde{f}_\epsilon, \x_0, \y_0, \ell, \epsilon/(2D_{\y}), \epsilon/2, T)$, we have
\begin{equation*}
\ell\|\PX[\hat{\x} - (1/\ell)\gradx \tilde{f}_\epsilon(\hat{\x}, \hat{\y}_\epsilon^+)] - \hat{\x}\| \leq \frac{\epsilon}{2}, \quad \ell\|\hat{\y}_\epsilon^+ - \hat{\y}\| \leq \frac{\epsilon}{2}, \quad \hat{\y}_\epsilon^+ = \PY[\hat{\y} + (1/\ell)\grady \tilde{f}_\epsilon(\hat{\x}, \hat{\y})].  
\end{equation*}
Putting these pieces together with $\hat{\y}^+ = \PY[\hat{\y} + (1/\ell)\grady f(\hat{\x}, \hat{\y})]$ yields that
\begin{equation*}
\ell\|\PX[\hat{\x} - (1/\ell)\gradx f(\hat{\x}, \hat{\y}^+)] - \hat{\x}\| \ \leq \ \frac{\epsilon}{2} + \|\gradx \tilde{f}_\epsilon(\hat{\x}, \hat{\y}_\epsilon^+) - \gradx f(\hat{\x}, \hat{\y}^+)\| \ \leq \ \frac{\epsilon}{2} + \|\grady \tilde{f}_\epsilon(\hat{\x}, \hat{\y}) - \grady f(\hat{\x}, \hat{\y})\| \ \leq \ \epsilon, 
\end{equation*}
and 
\begin{equation*}
\ell\|\hat{\y}^+ - \hat{\y}\| \ \leq \ \ell\|\PY[\hat{\y} + (1/\ell)\grady \tilde{f}_\epsilon(\hat{\x}, \hat{\y})] - \hat{\y}\| + \|\grady f(\x, \y) - \grady\tilde{f}_\epsilon(\x, \y)\| \ \leq \ \epsilon.
\end{equation*}
Therefore, we conclude that $(\hat{\x}, \hat{\y})$ is an $\epsilon$-stationary point of $f$. Furthermore, letting $\kappay = 2\ell\Dy/\epsilon$ in the gradient complexity bound presented in Theorem \ref{thm:nsc}, we conclude that the total number of gradient evaluations is bounded by 
\begin{equation*}
O\left(\frac{\ell\Delta_\Phi}{\epsilon^{2}} \cdot \sqrt{\frac{\ell \Dy}{\epsilon}}\log^2 \left(\frac{\ell(\tilde{D}_\x^2 + \Dy^2)}{\epsilon}\right)\right). 
\end{equation*}
This completes the proof. 

\subsection{Proof of Theorem~\ref{thm:nsc-Moreau}}
Using the same argument as in Theorem~\ref{thm:nsc}, we have 
\begin{equation}\label{Theorem:nsc-inequality-fourth}
\|\x_{t+1} - \x_t\|^2 \ \leq \ \frac{\Phi(\x_t) - \Phi(\x_{t+1}) + \delta}{\ell}. 
\end{equation}
and
\begin{equation}\label{Theorem:nsc-inequality-fifth}
\|\x_{t+1} - \x_t^*\|^2 \ \leq \ \frac{2}{\ell}\left(\Phi(\x_{t+1}) + \ell\|\x_{t+1} - \x_t\|^2 - \min_{\x \in \br^m} \left\{\Phi(\x) + \ell\|\x - \x_t\|^2\right\}\right) \ \leq \ \frac{2\delta}{\ell}. 
\end{equation}
Since $\Phi$ is differentiable, we have $\nabla\Phi(\x_t^*) + 2\ell(\x_t^* - \x_t) = 0$ which implies $\|\nabla \Phi(\x_t^*)\| = 2\ell\|\x_t^* - \x_t\|$. Since $\Phi(\cdot)$ is $2\kappay\ell$-smooth, we have $\|\nabla \Phi(\x_{t+1}) - \nabla \Phi(\x_t^*)\| \leq 2\kappay\ell\|\x_{t+1} - \x_t^*\|$. Putting these pieces together yields that 
 \begin{eqnarray}\label{Theorem:nsc-inequality-sixth}
\|\nabla \Phi(\x_{t+1})\| & \leq & 2\kappay\ell\|\x_{t+1} - \x_t^*\| + 2\ell\|\x_t^* - \x_t\| \ \leq \ (2\kappay\ell + 2\ell)\|\x_{t+1} - \x_t^*\| + 2\ell\|\x_{t+1} - \x_t\| \nonumber \\
& \overset{\kappay \geq 1}{\leq} & 4\kappay\ell\|\x_{t+1} - \x_t^*\| + 2\ell\|\x_{t+1} - \x_t\|.
\end{eqnarray}
Putting Eq.~\eqref{Theorem:nsc-inequality-fourth}, Eq.~\eqref{Theorem:nsc-inequality-fifth} and Eq.~\eqref{Theorem:nsc-inequality-sixth} together with the Cauchy-Schwarz inequality yields 
\begin{equation*}
\|\nabla \Phi(\x_{t+1})\|^2 \ \leq \ 32\kappay^2\ell^2\|\x_{t+1} - \x_t^*\|^2 + 8\ell^2\|\x_{t+1} - \x_t\|^2 \ \leq \ 8\ell\left(\Phi(\x_t) - \Phi(\x_{t+1}) + \delta\right) + 64\kappay^2\ell\delta. 
\end{equation*}
Summing up the above inequality over $t = 0, 1, \ldots, T-1$ and dividing it by $T$ yields that 
\begin{equation*}
\frac{1}{T}\left(\sum_{t=0}^{T-1} \|\nabla \Phi(\x_{t+1})\|^2\right) \ \leq \ \frac{8\ell(\Phi(\x_0) - \Phi(\x_T))}{T} + 8\ell\delta + 64\kappa_\y^2\ell\delta \ \overset{\kappay \geq 1}{\leq} \ \frac{8\ell(\Phi(\x_0) - \Phi(\x_T))}{T} + 72\kappay^2\ell\delta. 
\end{equation*}
Since $\hat{\x} = \x_s$ is uniformly chosen from $\{\x_s\}_{1 \leq s \leq T}$ and $\delta \leq \epsilon^2/144\kappay^2\ell$, we have
\begin{equation*}
\EE\left[\|\nabla \Phi(\hat{\x})\|^2\right] \ = \ \frac{1}{T}\left(\sum_{t=0}^{T-1} \|\nabla \Phi(\x_{t+1})\|^2\right) \ \leq \ \frac{8\ell(\Phi(\x_0) - \Phi(\x_T))}{T} + 72\kappay^2\ell\delta \ \leq \ \frac{8\ell\Delta_\Phi}{T} + \frac{\epsilon^2}{2}. 
\end{equation*}
Using the Markov inequality, we conclude that there exists $T > c\ell\Delta_\Phi\epsilon^{-2}$, where the output $\hat{\x}$ will satisfy $\|\grad \Phi(\hat{\x})\| \leq \epsilon$ with probability at least $2/3$. Furthermore, we call the solver $\textsc{\AGPROX}$ at each iteration. Using Theorem~\ref{Theorem:maximin-AG2} and $\delta \leq \epsilon^2/144\kappay^2\ell$, the number of gradient evaluations at each iteration is bounded by  
\begin{equation*}
O\left(\sqrt{\kappay}\log\left(\frac{\kappay^3\ell^2(\tilde{D}_\x^2 + \Dy^2)}{\epsilon^2}\right)\log\left(\frac{\kappay^2\ell^2\Dy^2}{\epsilon^2}\right)\right). 
\end{equation*}
Therefore, we conclude that the total number of gradient evaluations is bounded by 
\begin{equation*}
O\left(\frac{\ell\Delta_\Phi}{\epsilon^2}\cdot\sqrt{\kappay}\log^2\left(\frac{\kappay\ell (\tilde{D}_\x^2 + \Dy^2)}{\epsilon}\right)\right). 
\end{equation*}
This completes the proof. 

\subsection{Proof of Corollary~\ref{cor:nc-Moreau}}
Recall that the function $\bar{f}_\epsilon$ is defined by
\begin{equation*}
\bar{f}_\epsilon(\x, \y) \ = \ f(\x, \y) - \frac{\epsilon^2\|\y - \y_0\|^2}{200\ell\Dy^2}. 
\end{equation*}
This implies that the following statement holds for all $(\x, \y) \in \XCal \times \YCal$ that 
\begin{eqnarray*}
\gradx f(\x, \y) - \gradx\bar{f}_\epsilon(\x, \y) & = & 0, \\
\|\grady f(\x, \y) - \grady\bar{f}_\epsilon(\x, \y)\| & \leq & \frac{\epsilon^2}{100\ell\Dy}. 
\end{eqnarray*}
Using Theorem~\ref{thm:nsc-Moreau} and letting $\y_\epsilon^\star(\cdot) = \argmin_{\y \in \YCal} \bar{f}_\epsilon(\cdot, \y)$, we have
\begin{eqnarray*}
\|\gradx \bar{f}_\epsilon(\hat{\x}, \y_\epsilon^\star(\hat{\x}))]\| & \leq & \frac{\epsilon}{10}, \\ 
\ell\|\PY[\y_\epsilon^\star(\hat{\x}) + (1/\ell)\grady \bar{f}_\epsilon(\hat{\x}, \y_\epsilon^\star(\hat{\x}))] - \y_\epsilon^\star(\hat{\x})\| & = & 0. 
\end{eqnarray*}
For simplicity, we define $\y_\epsilon^+ = \PY[\y_\epsilon^\star(\hat{\x}) + (1/\ell)\grady f(\hat{\x}, \y_\epsilon^\star(\hat{\x}))]$. Then, we have
\begin{equation*}
\|\gradx f(\hat{\x}, \y_\epsilon^+)\| \leq \frac{\epsilon}{10} + \frac{\epsilon^2}{50\ell\Dy}, \quad \ell\|\y_\epsilon^+ - \y_\epsilon^\star(\hat{\x})\| \leq \frac{\epsilon^2}{50\ell\Dy}. 
\end{equation*}
Now let $\x^\star(\hat{\x}) = \argmin_{\x\in\mathbb{R}^m} \Phi_{1/2\ell}(\x):= \Phi(\x) + \ell \norm{\x - \hat{\x}}^2$, we have
\begin{equation*}
\norm{\grad \Phi_{1/2\ell}(\hat{\x})}^2 = 4\ell^2\norm{\hat{\x} - \x^\star(\hat{\x})}^2
\end{equation*}
Since $\Phi(\cdot) + \ell\|\cdot - \hat{\x}\|^2$ is $\ell/2$-strongly-convex, we have
\begin{eqnarray*}
\lefteqn{\max_{\y \in \YCal} f(\hat{\x}, \y) - \max_{\y \in \YCal} f(\x^*(\hat{\x}), \y) - \ell\|\x^*(\hat{\x}) - \hat{\x}\|^2} \\ 
& = & \Phi(\hat{\x}) - \Phi(\x^*(\hat{\x})) - \ell\|\x^*(\hat{\x}) - \hat{\x}\|^2 \ \geq \ \frac{\ell\|\hat{\x} - \x^*(\hat{\x})\|^2}{4} = \frac{\norm{\grad \Phi_{1/2\ell}(\hat{\x})}^2}{16\ell}. 
\end{eqnarray*}
Furthermore, we have
\begin{eqnarray*}
\lefteqn{\max_{\y \in \YCal} f(\hat{\x}, \y) - \max_{\y \in \YCal} f(\x^*(\hat{\x}), \y) - \ell\|\x^*(\hat{\x}) - \hat{\x}\|^2} \\
& = & \max_{\y \in \YCal} f(\hat{\x}, \y) - f(\hat{\x}, \y_\epsilon^+) + f(\hat{\x}, \y_\epsilon^+) - \max_{\y \in \YCal} f(\x^*(\hat{\x}), \y) - \ell\|\x^*(\hat{\x}) - \hat{\x}\|^2 \\
& \leq & \max_{\y \in \YCal} f(\hat{\x}, \y) - f(\hat{\x}, \y_\epsilon^+) + \left(f(\hat{\x}, \y_\epsilon^+)- f(\x^*(\hat{\x}), \y_\epsilon^+) - \ell\|\x^*(\hat{\x}) - \hat{\x}\|^2\right) \\
& \leq & \max_{\y \in \YCal} f(\hat{\x}, \y) - f(\hat{\x}, \y_\epsilon^+) + \left(\|\hat{\x} - \x^*(\hat{\x})\|\|\gradx f(\hat{\x}, \y_\epsilon^+)\| - \ell\|\hat{\x} - \x^*(\hat{\x})\|^2\right) \\
& \leq & \max_{\y \in \YCal} f(\hat{\x}, \y) - f(\hat{\x}, \y_\epsilon^+) + \frac{\|\gradx f(\hat{\x}, \y_\epsilon^+)\|^2}{4\ell}.
\end{eqnarray*}
Recall that $\y_\epsilon^+ = \PY[\y_\epsilon^\star(\hat{\x}) + (1/\ell)\grady f(\hat{\x}, \y_\epsilon^\star(\hat{\x}))]$, we have
\begin{equation*}
(\y - \y_\epsilon^+)^\top\left(\y_\epsilon^+ - \y_\epsilon^\star(\hat{\x}) - (1/\ell)\grady f(\hat{\x}, \y_\epsilon^\star(\hat{\x}))\right) \geq 0 \textnormal{ for all } \y \in \YCal. 
\end{equation*}
Together with the $\ell$-smoothness of the function $f(\hat{\x}, \cdot)$ and the boundedness of $\YCal$, we have
\begin{equation*}
f(\hat{\x}, \y) - f(\hat{\x}, \y_\epsilon^+) \leq \frac{\ell}{2}(\|\y - \y_\epsilon^\star(\hat{\x})\|^2 - \|\y - \y_\epsilon^+\|^2) \leq \ell D_\y \|\y_\epsilon^+ - \y_\epsilon^\star(\hat{\x})\| \textnormal{ for all } \y \in \YCal. 
\end{equation*}
Putting these pieces together yields that 
\begin{equation*}
\max_{\y \in \YCal} f(\hat{\x}, \y) - \max_{\y \in \YCal} f(\x^\star(\hat{\x}), \y) - \ell\|\x^\star(\hat{\x}) - \hat{\x}\|^2 \leq \ell D_\y\|\y_\epsilon^+ - \y_\epsilon^\star(\hat{\x})\| + \frac{\|\gradx f(\hat{\x}, \y_\epsilon^+)\|^2}{4\ell}. 
\end{equation*}
Since a point $(\hat{\x}, \y_\epsilon^\star(\hat{\x}))$ satisfies that  
\begin{equation*}
\|\gradx f(\hat{\x}, \y_\epsilon^+)\| \leq \frac{\epsilon}{10} + \frac{\epsilon^2}{50\ell\Dy}, \quad \ell\|\y_\epsilon^+ - \y_\epsilon^\star(\hat{\x})\| \leq \frac{\epsilon^2}{50\ell\Dy},  
\end{equation*}
we have (assume that $\epsilon \precsim \ell D_\y$ without loss of generality)
\begin{equation*}
\max_{\y \in \YCal} f(\hat{\x}, \y) - \max_{\y \in \YCal} f(\x^\star(\hat{\x}), \y) - \ell\|\x^\star(\hat{\x}) - \hat{\x}\|^2 \leq \frac{\epsilon^2}{50\ell} + \frac{\|\gradx f(\hat{\x}, \y_\epsilon^+)\|^2}{4\ell}. 
\end{equation*}
Putting these pieces together yields that $\|\grad \Phi_{1/2\ell}(\hat{\x})\| \le \epsilon$. Furthermore, letting $\kappay = 100\ell^2\Dy^2/\epsilon^2$ in the gradient complexity bound presented in Theorem \ref{thm:nsc-Moreau}, we conclude that the total number of gradient evaluations is bounded by 
\begin{equation*}
O\left(\frac{\ell^2\Dy\Delta_\Phi}{\epsilon^3} \log^2 \left(\frac{\ell(\tilde{D}_\x^2 + \Dy^2)}{\epsilon}\right)\right). 
\end{equation*}
This completes the proof. 


\section{Proof of Technical Lemmas}\label{app:lemmas}
In this section, we provide complete proofs for the lemmas in the paper. 
\subsection{Proof of Lemma~\ref{Lemma:NC-Moreau-Envelope}}
We provide a proof for an expanded version of Lemma~\ref{Lemma:NC-Moreau-Envelope}. 
\begin{lemma}
If $\Phi$ is $\ell$-weakly convex, we have
\begin{enumerate}[(a)]
\item $\Phi_{1/2\ell}(\x)$ and $\prox_{\Phi/2\ell}(\x) = \argmin \Phi(\w) + \ell\|\w - \x\|^2$ are well defined for any $\x \in \br^m$. 
\item $\Phi(\prox_{\Phi/2\ell}(\x)) \leq \Phi(\x)$ for any $\x \in \br^m$. 
\item $\Phi_{1/2\ell}$ is $4\ell$-smooth with $\grad\Phi_{1/2\ell}(\x) = 2\ell(\x - \prox_{\Phi/2\ell}(\x))$. 
\end{enumerate}
\end{lemma}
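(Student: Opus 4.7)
The plan is to reduce all three claims to standard convex analysis by exploiting the algebraic identity
\[
F(\w; \x) \; := \; \Phi(\w) + \ell\|\w - \x\|^2 \; = \; \bigl[\Phi(\w) + \tfrac{\ell}{2}\|\w\|^2\bigr] + \bigl[\tfrac{\ell}{2}\|\w\|^2 - 2\ell\w^\top\x + \ell\|\x\|^2\bigr],
\]
in which the first bracket is convex (by $\ell$-weak convexity of $\Phi$) and the second bracket is an $\ell$-strongly convex quadratic. Thus $F(\cdot;\x)$ is $\ell$-strongly convex, and together with the standing lower semi-continuity assumption on $\Phi$, it admits a unique minimizer $\prox_{\Phi/2\ell}(\x)$, giving (a). Part (b) then follows immediately from optimality: $F(\prox_{\Phi/2\ell}(\x); \x) \leq F(\x; \x) = \Phi(\x)$, so discarding the nonnegative quadratic term on the left yields $\Phi(\prox_{\Phi/2\ell}(\x)) \leq \Phi(\x)$.

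For (c), my first step is to establish a Lipschitz bound on $\w^\star(\cdot) := \prox_{\Phi/2\ell}(\cdot)$. The first-order optimality condition at $\w^\star(\x)$ reads $2\ell(\x - \w^\star(\x)) \in \partial\Phi(\w^\star(\x))$, where the subdifferential of the weakly convex $\Phi$ is taken via the shift identity $\partial\Phi(\w) = \partial\bigl(\Phi + \tfrac{\ell}{2}\|\cdot\|^2\bigr)(\w) - \ell\w$. Since the subdifferential of the convex function $\Phi + \tfrac{\ell}{2}\|\cdot\|^2$ is monotone, $\partial\Phi$ satisfies the hypomonotonicity inequality $(\xi - \xi')^\top(\w - \w') \geq -\ell\|\w - \w'\|^2$ for any $\xi \in \partial\Phi(\w)$ and $\xi' \in \partial\Phi(\w')$. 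Applying this to the two optimality conditions at $\x$ and $\x'$, and writing $\Delta\x := \x - \x'$, $\Delta\w := \w^\star(\x) - \w^\star(\x')$, gives $2\,\Delta\x^\top\Delta\w \geq \|\Delta\w\|^2$, from which Cauchy--Schwarz yields the Lipschitz estimate $\|\Delta\w\| \leq 2\|\Delta\x\|$.

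With this in hand, I would establish the gradient formula and smoothness via an envelope sandwich. Since $\w^\star(\x)$ is feasible (though not optimal) for the problem defining $\Phi_{1/2\ell}(\x')$, we have $\Phi_{1/2\ell}(\x') - \Phi_{1/2\ell}(\x) \leq F(\w^\star(\x); \x') - F(\w^\star(\x); \x) = 2\ell(\x - \w^\star(\x))^\top(\x' - \x) + \ell\|\x' - \x\|^2$, and a symmetric argument using $\w^\star(\x')$ gives $\Phi_{1/2\ell}(\x') - \Phi_{1/2\ell}(\x) \geq 2\ell(\x' - \w^\star(\x'))^\top(\x' - \x) - \ell\|\x' - \x\|^2$. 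The Lipschitz continuity of $\w^\star$ then identifies $\grad \Phi_{1/2\ell}(\x) = 2\ell(\x - \w^\star(\x))$ by letting $\x' \to \x$. Finally, plugging the gradient formula into $\|\grad\Phi_{1/2\ell}(\x) - \grad\Phi_{1/2\ell}(\x')\|^2 = 4\ell^2(\|\Delta\x\|^2 - 2\,\Delta\x^\top\Delta\w + \|\Delta\w\|^2)$ and invoking the key inequality $2\,\Delta\x^\top\Delta\w \geq \|\Delta\w\|^2$ collapses the right-hand side to at most $4\ell^2\|\Delta\x\|^2$, giving the desired smoothness estimate (with sharp constant $2\ell$, which \emph{a fortiori} yields $4\ell$-smoothness).

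The main technical hurdle is the correct handling of the subdifferential calculus for weakly convex functions---namely, recognizing that everything reduces to classical convex analysis once the shift $\ell\w$ is absorbed into the subdifferential of $\Phi + (\ell/2)\|\cdot\|^2$. The rest is routine envelope-theorem bookkeeping combined with a sharp use of the hypomonotonicity inequality.
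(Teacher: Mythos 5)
Your proof is correct, and parts (a) and (b) follow essentially the same route as the paper's: weak convexity makes the proximal objective strongly convex (the paper records the modulus as $\ell/2$, you get $\ell$; either suffices for existence and uniqueness), and (b) is the same one-line feasibility comparison. Where you genuinely diverge is part (c): the paper simply cites \citet[Lemma~2.2]{Davis-2019-Stochastic} for differentiability and the gradient formula, and then invokes the claim that $\prox_{\Phi/2\ell}$ is $1$-Lipschitz to conclude $4\ell$-smoothness, whereas you prove everything from scratch --- hypomonotonicity of $\partial\Phi$ gives $2\,\Delta\x^\top\Delta\w \geq \|\Delta\w\|^2$, hence the prox map is $2$-Lipschitz, the two-sided envelope comparison identifies $\grad\Phi_{1/2\ell}(\x) = 2\ell(\x - \prox_{\Phi/2\ell}(\x))$, and expanding the squared gradient difference and reusing the same inequality yields a Lipschitz constant of $2\ell \leq 4\ell$. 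Your route buys self-containedness and a sharper constant, and it also quietly repairs a small inaccuracy in the paper's argument: for an $\ell$-weakly convex $\Phi$ at this proximal parameter the prox map is $2$-Lipschitz, not $1$-Lipschitz (take $\Phi = -(\ell/2)\|\cdot\|^2$, for which $\prox_{\Phi/2\ell}(\x) = 2\x$ and $\grad\Phi_{1/2\ell}(\x) = -2\ell\x$), so the paper's triangle-inequality step as written would only give $6\ell$; your sharper expansion is exactly what recovers a bound within the claimed $4\ell$. The only cosmetic point is that you invoke a ``standing lower semi-continuity assumption'' in (a), which is unnecessary here: since $\Phi$ is finite-valued on $\br^m$ and $\Phi + (\ell/2)\|\cdot\|^2$ is convex, $\Phi$ is automatically continuous, and strong convexity plus coercivity already give existence and uniqueness of the minimizer.
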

\begin{proof}
Since $\Phi$ is $\ell$-weakly convex, $\Phi(\cdot) + (\ell/2)\left\|\cdot - \x\right\|^2$ is convex for any $\x \in \br^m$. This implies that $\Phi(\cdot) + \ell\left\|\cdot - \x\right\|^2$ is $(\ell/2)$-strongly convex and $\Phi_{1/2\ell}(\x)$ and $\prox_{\Phi/2\ell}(\x)$ are well defined. For any $\x \in \br^m$, the definition of $\prox_{\Phi/2\ell}(\x)$ implies that 
\begin{equation*}
\Phi(\prox_{\Phi/2\ell}(\x)) \ \leq \ \Phi_{1/2\ell}(\prox_{\Phi/2\ell}(\x)) \ \leq \ \Phi(\x). 
\end{equation*}
By~\citet[Lemma~2.2]{Davis-2019-Stochastic}, $\Phi_{1/2\ell}$ is differentiable with $\grad\Phi_{1/2\ell}(\x) = 2\ell(\x - \prox_{\Phi/2\ell}(\x))$. Since $\prox_{\Phi/2\ell}$ is $1$-Lipschitz, we $\|\grad\Phi_{1/2\ell}(\x) - \grad\Phi_{1/2\ell}(\x')\| \leq 4\ell\|\x - \x'\|$. Therefore, the function $\Phi_{1/2\ell}$ is $4\ell$-smooth.
\end{proof}

\subsection{Proof of Lemma~\ref{Lemma:NC-Stationary}}
Denote $\hat{\x} := \prox_{\Phi/2\ell}(\x)$, part (c) in Lemma~\ref{Lemma:NC-Moreau-Envelope} implies
\begin{equation*}
\|\hat{\x} - \x\| \ = \ \frac{\|\grad\Phi_{1/2\ell}(\x)\|}{2\ell}. 
\end{equation*}
Furthermore, we have $2\ell(\x - \hat{\x}) \in \partial \Phi(\hat{\x})$. Putting these pieces together yields the desired result.

\subsection{Proof of Lemma~\ref{Lemma:minimaxAG2-structure}}
\paragraph{Part (a):} Let $\x, \x' \in \br^m$, the points $\y_g^*(\x)$ and $\y_g^*(\x')$ satisfy that 
\begin{align}
(\y - \y_g^*(\x))^\top \grady g(\x, \y_g^*(\x)) & \ \leq \ 0, \qquad \forall \y \in \YCal, \label{inequality-structure-first} \\
(\y-\y_g^*(\x'))^\top \grady g(\x', \y_g^*(\x')) & \ \leq \ 0, \qquad \forall \y \in \YCal. \label{inequality-structure-second}
\end{align}
Summing up Eq.~\eqref{inequality-structure-first} with $\y = \y_g^*(\x')$ and Eq.~\eqref{inequality-structure-second} with $\y = \y_g^*(\x)$ yields
\begin{equation*}
(\y_g^*(\x') - \y_g^*(\x))^\top(\grady g(\x, \y_g^*(\x)) - \grady g(\x', \y_g^*(\x'))) \ \leq \ 0. 
\end{equation*} 
Since $g(\x, \cdot)$ is $\mu_\y$-strongly concave, we have
\begin{equation*}
(\y_g^*(\x') - \y_g^*(\x))^\top(\grady g(\x, \y_g^*(\x')) - \grady g(\x, \y_g^*(\x))) + \mu_\y\|\y_g^*(\x') - \y_g^*(\x)\|^2 \ \leq \ 0. 
\end{equation*}
Summing up the above two inequalities yields that 
\begin{equation*}
(\y_g^*(\x') - \y_g^*(\x))^\top(\grady g(\x, \y_g^*(\x')) - \grady g(\x', \y_g^*(\x'))) + \mu_\y\|\y_g^*(\x') - \y_g^*(\x)\|^2 \ \leq \ 0. 
\end{equation*}
Since $\grady g$ is $\ell$-Lipschitz, we have
\begin{equation*}
\mu_\y\|\y_g^*(\x') - \y_g^*(\x)\|^2 \ \leq \ \ell\|\y_g^*(\x') - \y_g^*(\x)\|\|\x' - \x\|. 
\end{equation*}
Therefore, we conclude that the function $\y_g^*(\cdot)$ is $\kappay$-Lipschitz. 
\paragraph{Part (b):} Since the function $\y_g^*(\cdot)$ is unique,  Danskin's theorem~\citep{Rockafellar-1970-Convex} implies that $\Phi_g$ is differentiable and $\grad \Phi_g(\cdot) = \gradx g(\cdot, \y_g^*(\cdot))$. Let $\x, \x' \in \br^m$, we have
\begin{eqnarray*}
\|\grad \Phi_g(\x) - \grad \Phi_g(\x')\| & = & \|\gradx g(\x, \y_g^*(\x)) - \gradx g(\x', \y_g^*(\x'))\| \ \leq \ \ell\|\x - \x'\| + \ell\|\y_g^*(\x) - \y_g^*(\x')\| \\
& \overset{\bar{\kappa} \geq 1}{\leq} & \kappay\ell\|\x - \x'\| + \ell\|\y_g^*(\x) - \y_g^*(\x')\|. 
\end{eqnarray*}
Since $\y_g^*(\cdot)$ is $\kappay$-Lipschitz, the function $\Phi_g$ is $2\kappay\ell$-smooth. Furthermore, let $\x, \x' \in \br^m$, we have
\begin{eqnarray*}
\Phi_g(\x') - \Phi_g(\x) - (\x' - \x)^\top\grad\Phi_g(\x) & = & g(\x', \y_g^*(\x')) - g(\x, \y_g^*(\x)) - (\x' - \x)^\top\gradx g(\x, \y_g^*(\x)) \\
& \geq & g(\x', \y_g^*(\x)) - g(\x, \y_g^*(\x)) - (\x' - \x)^\top\gradx g(\x, \y_g^*(\x)). 
\end{eqnarray*}
Since $g(\cdot, \y)$ is $\mu_\x$-strongly convex for each $\y \in \YCal$, we have
\begin{equation*}
\Phi_g(\x') - \Phi_g(\x) - (\x' - \x)^\top\grad\Phi_g(\x) \ \geq \ \frac{\mu_\x\|\x' - \x\|^2}{2}.  
\end{equation*}
Therefore, the function $\Phi_g$ is $\mu_\x$-strongly convex. 
\paragraph{Part (c):} Let $\y, \y' \in \br^n$, the points $\x_g^*(\y)$ and $\x_g^*(\y')$ satisfy that  
\begin{align}
(\x - \x_g^*(\y))^\top \gradx g(\x_g^*(\y), \y) & \ \geq \ 0, \qquad \forall \x \in \XCal, \label{inequality-structure-third} \\
(\x - \x_g^*(\y'))^\top \gradx g(\x_g^*(\y'), \y') & \ \geq \ 0, \qquad \forall \x \in \XCal. \label{inequality-structure-fourth}
\end{align}
Summing up Eq.~\eqref{inequality-structure-third} with $\x = \x_g^*(\y')$ and Eq.~\eqref{inequality-structure-fourth} with $\x = \x_g^*(\y)$ yields 
\begin{equation*}
(\x_g^*(\y') - \x_g^*(\y))^\top(\gradx g(\x_g^*(\y), \y) - \gradx g(\x_g^*(\y'), \y')) \ \geq \ 0. 
\end{equation*} 
Since $g(\cdot, \y)$ is $\mu_\x$-strongly convex, we have
\begin{equation*}
(\x_g^*(\y') - \x_g^*(\y))^\top(\gradx g(\x_g^*(\y'), \y') - \gradx g(\x_g^*(\y), \y')) - \mu_\x\|\x_g^*(\y') - \x_g^*(\y)\|^2 \ \geq \ 0. 
\end{equation*}
Summing up the above two inequalities yields that 
\begin{equation*}
(\x_g^*(\y') - \x_g^*(\y))^\top(\gradx g(\x_g^*(\y), \y) - \gradx g(\x_g^*(\y), \y')) - \mu_\x\|\x_g^*(\y') - \x_g^*(\y)\|^2 \ \geq \ 0. 
\end{equation*}
Since $\gradx g$ is $\ell$-smooth, we have
\begin{equation*}
\mu_\x\|\x_g^*(\y') - \x_g^*(\y)\|^2 \ \leq \ \ell\|\x_g^*(\y') - \x_g^*(\y)\|\|\y' - \y\|. 
\end{equation*}
Therefore, we conclude that the function $\x_g^*$ is $\kappa_\x$-Lipschitz.
\paragraph{Part (d):} Since the function $\x_g^*(\cdot)$ is unique, Danskin's theorem~\citep{Rockafellar-1970-Convex} implies that $\Psi_g$ is differentiable and $\grad \Psi_g(\cdot) = \grady g(\x_g^*(\cdot), \cdot)$. Let $\y, \y' \in \br^n$, we have
\begin{equation*}
\|\grad \Psi_g(\y) - \grad \Psi_g(\y')\| \ = \ \|\grady g(\x_g^*(\y), \y) - \grady g(\x_g^*(\y'), \y)\| \ \leq \ \ell\|\x_g^*(\y) - \x_g^*(\y')\| + \ell\|\y - \y'\|. 
\end{equation*}
Since $\x_g^*(\cdot)$ is $\kappax$-Lipschitz, the function $\Psi_g$ is $2\kappax\ell$-smooth. Furthermore, let $\y, \y' \in \br^n$, we have
\begin{eqnarray*}
\Psi_g(\y) - \Psi_g(\y') - (\y - \y')^\top\grad\Psi_g(\y) & = & g(\x_g^*(\y), \y) - g(\x_g^*(\y'), \y') - (\y - \y')^\top\grady g(\x_g^*(\y), \y) \\
& \geq & g(\x_g^*(\y), \y) - g(\x_g^*(\y), \y') - (\y - \y')^\top\grady g(\x_g^*(\y), \y). 
\end{eqnarray*}
Since $g(\x, \cdot)$ is $\mu_\y$-strongly concave for each $\x \in \XCal$, we have
\begin{equation*}
\Psi_g(\y) - \Psi_g(\y') - (\y - \y')^\top\grad\Psi_g(\y) \ \geq \ \frac{\mu_\y\|\y' - \y\|^2}{2}.  
\end{equation*}
Therefore, the function $\Psi_g$ is $\mu_\y$-strongly concave. 

\end{document}